\newcommand{\beq}{\begin{equation}}
\newcommand{\eeq}{\end{equation}}
\newcommand{\ben}{\begin{eqnarray}}
\newcommand{\een}{\end{eqnarray}}
\newcommand{\beno}{\begin{eqnarray*}}
\newcommand{\eeno}{\end{eqnarray*}}
\renewcommand{\theequation}{\thesection.\arabic{equation}}
\newtheorem{theorem}{Theorem}[section]
\newtheorem{lemma}[theorem]{Lemma}
\newtheorem{proposition}[theorem]{Proposition}
\newtheorem{Theorem}{Theorem}[section]
\newtheorem{Definition}[Theorem]{Definition}
\newtheorem{Lemma}[Theorem]{Lemma}
\newtheorem{Remark}[Theorem]{Remark}
\newcommand{\NN}{\mathbf{N}}
\newcommand{\Id}{\mathrm{Id}}
\begin{document}

\title[Boussinesq Equation]
{ On The Continuous Periodic Weak Solutions Of Boussinesq Equations}

\author{Tao Tao}
\address{School of  Mathematics Sciences, Peking University, Beijing, China}
\email{taotao@amss.ac.cn}
\author{Liqun Zhang }
\address{Academy of mathematic and system science , Beijing , China}
\email{lqzhang@math.ac.cn}
%
%

\date{\today}
\maketitle

\renewcommand{\theequation}{\thesection.\arabic{equation}}
\setcounter{equation}{0}

\begin{abstract}
The Boussingesq equations was introduced in understanding the coupling nature of the thermodynamics and the fluid dynamics.
We show the existence of continuous periodic weak solutions of the Boussinesq equations which satisfies the prescribed kinetic energy or some other prescribed property. Our results represent the conversions between internal energy and mechanical energy.
\end{abstract}

\noindent {\sl Keywords:} Boussinesq equations, Continuous periodic weak solutions, Prescribed kinetic

\qquad\qquad
  energy\

\vskip 0.2cm

\noindent {\sl AMS Subject Classification (2000):} 35Q30, 76D03  \

\section{Introduction}
In this paper, we consider the following Boussinesq equations
\begin{equation}\label{e:boussinesq equation}
\begin{cases}
v_{t}+\hbox{div}(v\otimes v)+\nabla p=\theta e_{3},\quad\quad \mbox{in}\quad T^3\times [0,1]\\
\hbox{div}v=0,\\
\theta_{t}+\hbox{div}(v\theta)=h, \quad\quad \mbox{in}\quad T^3\times [0,1],
\end{cases}
\end{equation}
where $T^3$ denote the 3-dimensional torus, i.e. $T^3=S^1\times S^1 \times S^1$ and $e_3=(0,0,1)^T$. Here in our notations, $v$ is the velocity vector, $p$ is the pressure, $\theta$ denotes the temperature or density which is a scalar function and $h$ is the heat sources.
The Boussinesq equations model many geophysical flows, such as atmospheric fronts and ocean circulations (see, for example, \cite{Ma},\cite{Pe}).

A recurrent issue in the modern theory of PDEs is that one needs to go beyond classical solutions. The pair $(v,\theta)$ on $T^3\times[0,1]$ is called a weak solution of (\ref{e:boussinesq equation}) if they solve (\ref{e:boussinesq equation})  in the following sense:
\begin{align}
\int_0^1\int_{T^3}(\partial_t\varphi\cdot v+\nabla\varphi:v\otimes v+p\hbox{div}\varphi-\theta e_3\cdot\varphi )dxdt=0,\nonumber
\end{align}
for all $\varphi\in C_c^\infty(T^3\times(0,1);R^3),$
\begin{align}
\int_0^1\int_{T^3}(\partial_t\phi\theta+v\cdot\nabla\phi\theta+h\phi)dxdt=0,\nonumber
\end{align}
for all $\phi\in C_c^\infty(T^3\times(0,1);R)$ and
\begin{align}
\int_0^1\int_{T^3}v\cdot\nabla\psi dxdt=0.\nonumber
\end{align}
for all $\psi\in C_c^\infty(T^3\times(0,1);R).$

Moreover, the study of weak solutions in fluid dynamics, including those which fail to conserve energy, is natural in the context of turbulent flows. One of the famous example is the Onsager conjecture on Euler equation which claims that the incompressible Euler equation admits H\"{o}lder continuous solution which dissipates kinetic energy. More precisely, the Onsager conjecture on Euler equation can be stated as following:
\begin{enumerate}
  \item $C^{0,\alpha}$ solution are energy conservative when $\alpha>\frac{1}{3}$.\\
  \item For any $\alpha <\frac{1}{3}$, there exist dissipative solutions with $C^{0,\alpha}$ regularity .
\end{enumerate}

For this conjecture, the part (a) has been proved by P. Constantin, E, Weinan and  E. Titi in \cite{CET}. Slightly weak assumption on the solution was subsequently shown to be sufficient for energy conservation by P. Constantin, etc. in \cite{CPFR,DUR}, also see \cite{SH}. More recently, P. Isett and Sung-jin Oh give a proof to this part of Onsager's conjecture for the Euler equations on manifolds by heat flow method in \cite{ISOH1}.

 The part (b) has been treated by many authors. For weak solutions, there are some results by V. Scheffer (\cite{VS}), A. Shnirelman (\cite{ASH1,ASH2}) and Camillo De Lellis, L\'{a}szl\'{o} Sz\'{e}kelyhidi (\cite{CDL,CDL0}). In recent year, a great progress in the construction of continuous and H\"{o}lder solution was made by Camillo De Lellis, L\'{a}szl\'{o} Sz\'{e}kelyhidi, P. Isett and T. Buckmaster. Camillo De Lellis and L\'{a}szl\'{o} Sz\'{e}kelyhidi developed an iterative scheme (some kind of convex integration) in \cite{CDL2}. Together with the aid of Beltrami flow on $T^3$ and Geometric Lemma, they constructed a continuous periodic solution on $T^3$ which satisfies the prescribed kinetic energy. The solution is a superposition of infinitely many (perturbed) and weakly interacting Beltrami flows.  Building on the iterative techniques in \cite{CDL2} and Nash-Moser mollify techniques, they constructed H\"{o}lder continuous periodic solution with exponent $\theta$, for any $\theta<\frac{1}{10}$, which satisfies the prescribed kinetic energy in \cite{CDL3}. Later, in his PhD Thesis \cite{IS1}, P. Isett introduced some ideas and  construct H\"{o}lder continuous periodic solutions with  any $\theta<\frac{1}{5}$, and the solution has compact supports in time. Adhering more to the original scheme and introducing some new devices in \cite{BCDL1}, Camillo De Lellis, L\'{a}szl\'{o} Sz\'{e}kelyhidi and T. Buckmaster constructed H\"{o}lder continuous weak solutions with any $\theta<\frac{1}{5}$, which satisfies the prescribed kinetic energy, also see \cite{BCDLI}.
Also, in whole space $R^3$, P. Isett and Sung-jin Oh in \cite{ISOH2} constructed H\"{o}lder continuous solutions  with  any $\theta<\frac{1}{5}$, which satisfies the prescribed kinetic energy or is a perturbation of smooth Euler flow.

For the Onsager critical spatial regularity (H\"{o}lder exponent $\theta=\frac{1}{3}$), there are also some interesting results. In line with previous scheme,  by keeping track of sharper, time localized estimates in \cite{TBU}, T. Buckmaster constructed H\"{o}lder continuous(with exponent $\theta<\frac{1}{5}-\varepsilon$ in time-space) periodic solutions which for almost every time belongs to $C_x^{\theta}$, for any $\theta<\frac{1}{3}$, and is compactly temporal supported. By the subdivision of the time interval and careful smoothing Reynolds stress for some time intervals in \cite{BCDL2} , Camillo De Lellis, L\'{a}szl\'{o} Sz\'{e}kelyhidi and T. Buckmaster constructed  H\"{o}lder continuous periodic solution which belongs to $L^1_tC_x^\theta$, for any $\theta<\frac{1}{3}$, and has compact support in time. Recently, S. Daneri consider the Cauchy problem for dissipative H\"{o}lder Euler flow in \cite{DA}.

Motivated by Onsager's conjecture of Euler equation and the above earlier works, we consider the Boussinesq equations and want to know if the similar phenomena can also happen when considering the temperature effects. The difference is that there are conversions between internal energy and mechanical energy. What we need is to overcome the difficulty of interactions between velocity and temperature. To this end, we first consider the existence of continuous periodic solution of Boussinesq equations which satisfies the prescribed kinetic energy. We follow the general scheme in the construction of Euler equations. By establishing the corresponding geometric lemma and improving the iteration scheme, we obtained some existence results. Besides, our results represent some conversions between internal energy and mechanical energy.

Before presenting our theorem, we introduce some notations
\begin{align}
\Theta:=\{\theta(x)\in C^{\infty}(T^{3}): \theta~~ \mbox{only dependent on}~~ x_{3}, \quad i.e. \quad \theta(x)=\overline{\theta}(x_{3})\},\nonumber
\end{align}
and
\begin{align}
\Xi:=\{a(t)b(x_3): a\in C^{\infty}([0,1])~~ \mbox{and} ~~b \in C^{\infty}([0,2\pi])\}.\nonumber
\end{align}
First, for the case without heat source,
i.e. $h=0$, we have the following theorem:
\begin{theorem}\label{t:main 1}
Assume that $e(t):[0,1]\rightarrow R$ is a given positive smooth function and $\theta_0\in\Theta$. Then there exist
\begin{align}
    (v,p,\theta)\in C(T^{3}\times[0,1];R^{3}\times R\times R)\nonumber
\end{align}
and a positive number $M=M(e)$ such that they
solve the system \eqref{e:boussinesq equation} with $h=0$ in the sense of distribution and
$$ e(t)=\int_{T^{3}}|v|^{2}(x,t)dx,\qquad \|\theta-\theta_0\|_0< 4M ,$$
\end{theorem}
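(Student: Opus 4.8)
The plan is to adapt the Nash--De Lellis--Sz\'ekelyhidi convex integration scheme from \cite{CDL2} to the coupled Boussinesq system. I would work with the Boussinesq--Reynolds system: triples $(v_q,p_q,\theta_q)$ together with a symmetric Reynolds stress $R_q$ and a temperature error $\rho_q$ (a vector field) solving
\begin{align}
\partial_t v_q+\operatorname{div}(v_q\otimes v_q)+\nabla p_q&=\theta_q e_3+\operatorname{div}R_q,\nonumber\\
\operatorname{div}v_q&=0,\nonumber\\
\partial_t\theta_q+\operatorname{div}(v_q\theta_q)&=\operatorname{div}\rho_q,\nonumber
\end{align}
and to construct inductively a sequence converging uniformly, with $\|R_q\|_0,\|\rho_q\|_0\to 0$, so that the limit is a continuous weak solution. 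A crucial simplification, exploited by the hypotheses $\theta_0\in\Theta$ and the class $\Xi$, is to keep $\theta_q=\theta_0$ fixed throughout (it depends only on $x_3$, so $\operatorname{div}(v_q\theta_q)$ can be absorbed): then the temperature equation needs no correction, $\rho_q\equiv 0$, and the only genuinely new term compared with Euler is the forcing $\theta_0 e_3$ on the right-hand side of the momentum equation. The bound $\|\theta-\theta_0\|_0<4M$ is then automatic since $\theta\equiv\theta_0$; the constant $M=M(e)$ will emerge from the energy bookkeeping below.

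The core is one iteration step: given $(v_q,p_q,R_q)$ with $\|R_q\|_0\le\delta_{q+1}$ (for a geometric sequence $\delta_q$) and suitable $C^1$ bounds, produce $(v_{q+1},p_{q+1},R_{q+1})$ with $\|R_{q+1}\|_0\le\delta_{q+2}$, $\|v_{q+1}-v_q\|_0\lesssim\delta_{q+1}^{1/2}$, and the prescribed-energy gap controlled, $\bigl|e(t)(1-\delta_{q+2})-\int_{T^3}|v_{q+1}|^2\bigr|$ small. First I would mollify $v_q,R_q$ at a length scale $\ell$; then add a perturbation $w_{q+1}=w_o+w_c$ where $w_o$ is a superposition of Beltrami flows $\sum_k a_k(x,t)\,\phi_k(\lambda_{q+1}x)\,B_k$ oscillating at high frequency $\lambda_{q+1}$, with amplitudes $a_k$ chosen via the Geometric Lemma so that $\sum_k a_k^2\,(\text{average of }\phi_k^2\otimes\phi_k^2\text{-type tensor})$ cancels the low-frequency stress $R_q$ plus the energy-correction term $\tfrac{1}{2}\bigl(e(t)(1-\delta_{q+2})-\int|v_q|^2 - \text{trace corrections}\bigr)\mathrm{Id}$; $w_c$ is a corrector restoring $\operatorname{div}w_{q+1}=0$. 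One then sets $v_{q+1}=v_q+w_{q+1}$ and defines the new Reynolds stress by inverting the divergence (via the operator $\mathcal R$ of \cite{CDL2}) on all the error terms: the transport error, the Nash error $\operatorname{div}(w_o\otimes v_\ell+v_\ell\otimes w_o)$, the oscillation error $\operatorname{div}(w_o\otimes w_o + R_\ell - (\text{average}))$, the corrector errors, and—this is the only new piece—the low-frequency remainder $(\theta_0-\theta_{0,\ell})e_3$ from mollifying the forcing, which is actually zero if we never mollify $\theta_0$, plus the contribution of $\theta_0 e_3$ to the pressure. Since $\theta_0 e_3$ is a fixed smooth bounded function, it contributes only a harmless bounded term to the $C^0$ and $C^1$ estimates and does not interfere with the high-frequency cancellation; choosing the parameters $\ell,\lambda_{q+1},\delta_q$ as in \cite{CDL2} (e.g. $\delta_q=a^{-b^q}$, $\lambda_q$ super-exponential) closes the induction.

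After the iteration, the standard telescoping argument gives $v_q\to v$ in $C^0(T^3\times[0,1])$, $p_q\to p$ in $C^0$, $R_q\to 0$ in $C^0$; passing to the limit in the Boussinesq--Reynolds system shows $(v,p,\theta_0)$ is a distributional solution of \eqref{e:boussinesq equation} with $h=0$, and the energy identity $\int|w_{q+1}|^2\approx -e(t)\delta_{q+2}+\ldots$ summed over $q$ yields $\int_{T^3}|v|^2(x,t)\,dx=e(t)$ exactly. The main obstacle—beyond carefully setting up the scheme—is establishing the corresponding Geometric Lemma in the presence of the temperature forcing, i.e.\ showing that the target tensor $R_\ell + \tfrac12\rho(t)\mathrm{Id}$ (with $\rho(t)$ the energy gap) still lies in the small ball around a positive multiple of $\mathrm{Id}$ on which the Beltrami-amplitude functions are smoothly defined; this requires the a priori control $\|R_q\|_0\le\delta_{q+1}$ together with a lower bound on the energy gap $\rho(t)\ge$ const$\,\cdot\,\delta_{q+1}$, which in turn forces the choice of $M=M(e)$ and an appropriate normalization of $e(t)$ (using positivity of $e$). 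If one instead wanted $\theta$ to genuinely vary, one would additionally need a second geometric/antidivergence construction for $\rho_q$ coupled to $w_o$, which is why restricting to $\theta_0\in\Theta$ and the class $\Xi$ is the natural first step taken here.
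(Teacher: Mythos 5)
There is a genuine gap, and it sits exactly at the point you dismiss as a ``crucial simplification.'' Keeping $\theta_q\equiv\theta_0$ with $\rho_q\equiv 0$ does not satisfy the temperature equation: since $\operatorname{div}v_q=0$, one has $\partial_t\theta_0+\operatorname{div}(v_q\theta_0)=v_q\cdot\nabla\theta_0=v_q^3\,\bar\theta_0'(x_3)$, which is an $O(1)$ quantity once $v_q$ carries the prescribed energy $e(t)>0$ (and it cannot be made to vanish by arranging $v^3=0$, because the buoyancy term $\theta e_3$ and the Beltrami/plane-wave perturbations both live in the third component). Even the weaker repair of setting $\rho_q=\mathcal{G}(v_q\cdot\nabla\theta_0)$ fails to close the iteration: the contribution $\mathcal{G}(w_j\cdot\nabla\theta_0)$ of the stage-$j$ velocity increment is fixed once $\lambda_j$ is chosen and does not decrease as $q\to\infty$, so $\|\rho_q\|_0$ cannot be driven below the stage-$j$ threshold at later stages. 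Consequently the limit $(v,p,\theta_0)$ is not a distributional solution of the coupled system, only of the momentum equation with a fixed forcing. This is also why the conclusion $\|\theta-\theta_0\|_0<4M$ is stated as an inequality rather than an identity: the paper's Remark 1.1 makes clear that $\theta$ genuinely differs from $\theta_0$.

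What the paper does, and what your scheme is missing, is precisely the coupling mechanism you relegate to a final aside. The temperature is perturbed at every step by oscillations $\chi_n=\sum_l\beta_{nl}(e^{i\lambda_n2^{|l|}k_n\cdot(x-lt/\mu_n)}+\mathrm{c.c.})$ phase-locked to the velocity waves $w_{no}$; the extended Geometric Lemma (Lemma 3.1) simultaneously decomposes the Reynolds stress $R$ as $\sum_k\gamma_k^2(R)(\mathrm{Id}-\hat k\otimes\hat k)$ and the temperature flux error $f$ as $\sum_k g_k(f)A_k$ with $k\cdot A_k=0$; and the nonoscillatory part of the product $w_{no}\chi_n$, namely $2\sum_l\beta_{nl}b_{nl}A_{k_n}=-g_{k_n}(f)A_{k_n}$, cancels one component of $f$ per step, while the high-frequency remainders (including the transport term $w_n\cdot\nabla\theta_{0(n-1)}$ you identified) are absorbed by the scalar antidivergence $\mathcal{G}$ with a gain of $\lambda_n^{-(1-\alpha)}$. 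Without this, the flux error $f$ accumulated from $w\cdot\nabla\theta$ has no mechanism to decay. A secondary (non-fatal) divergence from the paper: it does not use Beltrami flows at all, but a multi-step scheme of one-dimensional plane waves in the spirit of Nash and Isett--Vicol, removing one block of the stress per step; your Beltrami-based reduction would only be viable if the decoupling you assert were true, which it is not.
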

where $\|f\|_0=\hbox{sup}_{x,t}|f(x,t)|.$

\begin{Remark}
In our theorem \ref{t:main 1}, if $\theta=0$, then it's the continuous Euler flow with prescribed kinetic energy and have been constructed by Camillo De Lellis and L\'{a}szl\'{o} Sz\'{e}kelyhidi in \cite{CDL2}. In general, for example, if we take $\theta_0=10Mcosx_3\in\Theta$, then we must have $\theta\neq 0$.
\end{Remark}


Next, we consider the Boussinesq equations \eqref{e:boussinesq equation} with heat source. For some kinds of heat sources, we can also construct continuous periodic solution with prescribed kinetic energy. Specially, we have the following theorem:
\begin{theorem}\label{t:main 2}
Assume that $e(t):[0,1]\rightarrow R$ is a positive smooth function and $h\in\Xi$. Then there exist
\begin{align}
    (v,p,\theta)\in C(T^{3}\times[0,1];R^{3}\times R\times R)\nonumber
\end{align}
such that they
solve the system \eqref{e:boussinesq equation} in the sense of distribution and
$$ e(t)=\int_{T^{3}}|v|^{2}(x,t)dx.$$
\end{theorem}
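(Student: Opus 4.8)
The idea is to follow the scheme of Theorem~\ref{t:main 1}, after first absorbing the heat source into a smooth background temperature. Write $h=a(t)b(x_3)$; we may assume $\int_0^{2\pi}b\,ds=0$, since the mean of $b$ contributes an $x$-independent buoyancy that is removed by a routine modification of the ansatz. Set
\[
\Theta_h(x_3,t):=\Big(\int_0^t a(s)\,ds\Big)b(x_3),
\]
so that $\partial_t\Theta_h=h$ and, because $\Theta_h$ depends only on $(x_3,t)$ and has zero $x_3$-mean, the buoyancy $\Theta_h e_3$ is the exact periodic gradient $\nabla_x\!\big(\int_0^{x_3}\Theta_h(s,t)\,ds\big)$. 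Writing $\theta=\Theta_h+\vartheta$ and absorbing this gradient into the pressure, \eqref{e:boussinesq equation} becomes equivalent to finding a continuous triple $(v,p,\vartheta)$ with $\mathrm{div}\,v=0$,
\[
\partial_t v+\mathrm{div}(v\otimes v)+\nabla p=\vartheta e_3,\qquad
\partial_t\vartheta+\mathrm{div}(v\vartheta)=-\,v_3\,\partial_3\Theta_h,
\]
and $\int_{T^3}|v|^2(x,t)\,dx=e(t)$. Thus the heat source has been traded for a velocity-dependent, coarse-scale forcing $-v_3\partial_3\Theta_h$ in the temperature equation, while the buoyancy is reduced to the perturbative size $\vartheta e_3$.

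I would then run the coupled convex integration scheme exactly as in the proof of Theorem~\ref{t:main 1}, constructing $(v_q,p_q,\vartheta_q)$ that solve the relaxed system
\[
\partial_t v_q+\mathrm{div}(v_q\otimes v_q)+\nabla p_q=\vartheta_q e_3+\mathrm{div}\,\mathring R_q,\qquad
\partial_t\vartheta_q+\mathrm{div}(v_q\vartheta_q)+v_{q,3}\,\partial_3\Theta_h=\mathrm{div}\,\Phi_q,
\]
with $\mathrm{div}\,v_q=0$, where $\mathring R_q$ is a symmetric trace-free matrix field, $\Phi_q$ a vector field, $\|\mathring R_q\|_0$ and $\|\Phi_q\|_0$ decay at prescribed geometric rates, and $\int_{T^3}|v_q|^2\,dx\to e(t)$ in a controlled fashion. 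At each step the velocity correction $w_{q+1}$ is a finite superposition of (slightly perturbed, weakly interacting) Beltrami flows oscillating at a high frequency $\lambda_{q+1}$, whose amplitudes are fixed by the geometric lemma so that the low-frequency part of $w_{q+1}\otimes w_{q+1}$ cancels $\mathring R_q$ and at the same time fills the energy gap $e(t)-\int_{T^3}|v_q|^2\,dx$, as in \cite{CDL2}. The temperature correction $\chi_{q+1}$ is chosen so that the leading oscillation of $\mathrm{div}(v_q\chi_{q+1})$ cancels $\mathrm{div}\,\Phi_q$ together with the freshly produced term $w_{q+1,3}\,\partial_3\Theta_h$; since the temperature equation is \emph{linear} in $\vartheta$, this is a ``beating'' correction of amplitude $\sim\|\Phi_q\|_0/\lambda_{q+1}$ rather than a quadratic one. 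One then estimates the new errors --- $\mathring R_{q+1}$ coming from $w_{q+1}\otimes w_{q+1}$ minus its local average, the transport and Nash errors, and the now genuinely small buoyancy terms $\vartheta_{q+1}e_3$ and $\chi_{q+1}e_3$; and $\Phi_{q+1}$ coming from $w_{q+1}\chi_{q+1}$, $\chi_{q+1}w_{q+1}$, the self-transport of $\chi_{q+1}$, and the residual of the approximate cancellation --- and tunes the parameters $\delta_q,\lambda_q$ so that all of them are summably small. Uniform convergence of $(v_q,p_q,\vartheta_q)$ then produces a continuous solution of the reduced system, and $(v,p,\theta)$ with $\theta:=\Theta_h+\vartheta$ is the desired continuous solution of \eqref{e:boussinesq equation} with $e(t)=\int_{T^3}|v|^2\,dx$.

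The place where this goes beyond the Euler case, and where the hypothesis $h\in\Xi$ is used, is the two-way coupling between the momentum and temperature equations. The velocity correction needed to remove $\mathring R_q$ has the ``large'' size $\|w_{q+1}\|_0\sim\delta_{q+1}^{1/2}$, and through $w_{q+1,3}\,\partial_3\Theta_h$ --- and, in the reverse direction, through $\chi_{q+1}e_3$ --- it feeds a term of that same size into the other equation, a priori too large to be absorbed at the next stage. Two features rescue the scheme. First, because $\Theta_h$ depends on $(x_3,t)$ only, $\Theta_h e_3$ is an \emph{exact} gradient and contributes nothing to the Reynolds stress, so that only the small perturbative buoyancy $\vartheta_q e_3$ couples back into the momentum error. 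Second, the order-one forcing $v_{q,3}\,\partial_3\Theta_h$ lives at the coarse scale and can therefore be cancelled by the linear corrector $\chi_{q+1}$, provided one keeps $\|\Phi_q\|_0$ one geometric level below $\|\mathring R_q\|_0$ and makes the frequencies $\lambda_{q+1}$ large enough that the feedback $\chi_{q+1}e_3\sim\delta_{q+1}^{1/2}/\lambda_{q+1}$ is absorbable into $\mathring R_{q+1}$. Checking that these size relations close the induction while preserving the energy bookkeeping for $v_q$ is the technical core of the argument; all the remaining steps are as in the proof of Theorem~\ref{t:main 1} and in \cite{CDL2}.
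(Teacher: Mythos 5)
Your reduction is exactly the paper's: the background temperature $\Theta_h=\big(\int_0^t a\big)b(x_3)$ is precisely the paper's choice of starting datum $\theta_0$ in the proof of Theorem \ref{t:main 2}, the pressure $p_0=\big(\int_0^t a\big)\int_0^{x_3}b$ absorbs its buoyancy, and with $v_0=0$, $\mathring R_0=0$, $f_0=0$ the quintuple solves the Boussinesq--Reynolds system \eqref{d:boussinesq reynold} \emph{with} the heat source. Since Proposition \ref{p: iterative 1} is already stated and proved for the system with heat source (the source simply rides along unchanged in each iteration step, and the term you call $-v_3\partial_3\Theta_h$ is exactly the contribution $\mathcal{G}(w_n\cdot\nabla\theta_{0(n-1)})$, which is small because $w_n$ oscillates at frequency $\lambda_n$), the theorem follows by quoting that proposition verbatim; your change of variables $\theta=\Theta_h+\vartheta$ is a cosmetic repackaging of the same idea.

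There is, however, a genuine error in the iteration mechanism you describe, which would make the scheme fail if implemented as written. You propose to cancel the coarse-scale error $\mathrm{div}\,\Phi_q$ (and the term $w_{q+1,3}\partial_3\Theta_h$) by the ``leading oscillation of $\mathrm{div}(v_q\chi_{q+1})$,'' with $\chi_{q+1}$ of amplitude $\sim\|\Phi_q\|_0/\lambda_{q+1}$. Since $v_q$ varies on scale $O(1)$ (or $\lambda_q\ll\lambda_{q+1}$) while $\chi_{q+1}$ oscillates at frequency $\lambda_{q+1}$, the product $v_q\chi_{q+1}$ is purely high-frequency: by stationary phase it has no low-frequency component of relevant size, so it cannot cancel the slowly varying field $\mathrm{div}\,\Phi_q$; correspondingly, in the paper the analogous term $\sum_l(v_0-l/\mu_1)\chi_{1l}$ is placed in the \emph{error} $\delta f_{01}$, not used for cancellation. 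The correct mechanism is the resonance between the two \emph{new} waves: $\chi_{q+1}$ is built with amplitude $\beta_{1l}\sim g_{k_1}(-f_q)/\sqrt{\bar\rho}\sim\sqrt{\delta}$ (not $\|\Phi_q\|_0/\lambda_{q+1}$) and the same phases as $w_{(q+1)o}$, so that the zero-frequency part of $w_{(q+1)o}\chi_{q+1}$ equals $2\sum_l\beta_{1l}b_{1l}A_{k_1}=-g_{k_1}(f_q)A_{k_1}$ and removes one component of $f_q$; with your amplitude ansatz this product would be of size $\sqrt{\delta}\,\|\Phi_q\|_0/\lambda_{q+1}$, far too small to cancel $\Phi_q$. (You also list $w_{q+1}\chi_{q+1}$ among the \emph{new} errors $\Phi_{q+1}$, which inverts the roles these two products play.) Finally, the term $w_{q+1,3}\partial_3\Theta_h$ needs no cancellation at all: it is high-frequency, and $\mathcal{G}$ applied to it gains the factor $\lambda_{q+1}^{-1}$.
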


Furthermore, to explore the possible effect of temperature on the velocity field, we can also prove the following theorem for Boussinesq equations without heat source.
\begin{theorem}\label{t:main 3}
For a given positive constant $M$ and any positive number $\lambda$, there exist
\begin{align}
    (v,p,\theta)\in C(T^{3}\times[0,1];R^{3}\times R\times R)\nonumber
\end{align}
such that they
solve the system \eqref{e:boussinesq equation} in the sense of distribution and
\begin{align}
\|v(x,0)\|_0\leq& 4M, ~~~~~ \int_0^1\int_{T^3}|\theta|^2(x,t)dxdt\geq\lambda^2,\nonumber\\
{\sup}_{x \in T^3}|v(x,t)|\geq&\lambda,\qquad
{\inf}_{x \in T^3}|v(x,t)|\leq 4M, \qquad \forall t\in[\frac{1}{2},1].\nonumber
\end{align}
\end{theorem}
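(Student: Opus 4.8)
The plan is to obtain Theorem~\ref{t:main 3} as a consequence of the same iterative convex-integration machinery that underlies Theorems~\ref{t:main 1} and~\ref{t:main 2}, but applied with a \emph{different choice of the target kinetic-energy profile} $e(t)$ rather than to a prescribed profile. Concretely, I would first fix a smooth positive function $e\colon[0,1]\to R$ with the two features we need: $e(t)$ is extremely small on $[0,\tfrac14]$ (so small that the solution produced by the iteration has $\|v(\cdot,0)\|_0\le 4M$, using the standard fact that in these constructions the sup-norm of $v$ is controlled by $\sqrt{e(t)}$ up to a universal constant), and $e(t)\equiv E$ for $t\in[\tfrac12,1]$ with $E=E(\lambda,M)$ chosen so large that any divergence-free field with $\int_{T^3}|v|^2=E$ must satisfy $\sup_{x}|v(x,t)|\ge\lambda$. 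Then I would invoke the construction of Theorem~\ref{t:main 1} (or rather its proof) to produce $(v,p,\theta)$ solving \eqref{e:boussinesq equation} with $h=0$, $\int_{T^3}|v|^2(x,t)\,dx=e(t)$, and $\|\theta-\theta_0\|_0<4M$ for a convenient choice of $\theta_0\in\Theta$; the inequality $\inf_x|v(x,t)|\le 4M$ for $t\in[\tfrac12,1]$ should come for free from the pigeonhole/averaging bound $\inf_x|v|^2\le \fint_{T^3}|v|^2 = E/|T^3|$ once we also keep $M$ large relative to $\sqrt{E}$ --- so in fact the two scales $E$ and $M$ must be reconciled, which I address below.

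The delicate point is that we are asked \emph{simultaneously} for $\sup_x|v(x,t)|\ge\lambda$ and $\inf_x|v(x,t)|\le 4M$ on $[\tfrac12,1]$, with $M$ fixed in advance and $\lambda$ arbitrarily large; since a constant field has $\sup=\inf$, the solution must genuinely oscillate in $x$, and the gap between $4M$ and $\lambda$ can be huge. This is exactly what the convex-integration scheme delivers: the velocity is a superposition of highly oscillatory Beltrami-type waves, so while its spatial average (hence $L^2$ norm, hence $\sup$) is large, it still has zeros or near-zeros in every period cell. To make $\inf_x|v(x,t)|\le 4M$ rigorous I would not rely on the energy average (which only gives $\inf\le\sqrt{E/|T^3|}$, too weak) but instead track, through the iteration, that each perturbation $w_q$ vanishes on a nonempty open set at each time --- e.g. arrange the phases of the building-block Beltrami flows so that at every $(x,t)$ at least one amplitude vanishes on a common sub-cell, or more simply observe that the first-stage flow $v_0$ can be taken to vanish identically near $t\in[\tfrac12,1]$ on a small ball $B\subset T^3$ that is never ``filled in'' because the corrections $w_q$ are supported where the Reynolds stress lives. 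Actually the cleanest route: build $v$ on $[\tfrac12,1]$ so that it equals a fixed smooth stationary-in-structure field only away from a small spatial region and keep a puncture; I expect one of these variants to be carried out in the paper's proof.

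For the temperature lower bound $\int_0^1\!\int_{T^3}|\theta|^2\ge\lambda^2$, the idea is that the coupling term $\theta e_3$ in the momentum equation is what \emph{drives} the growth of kinetic energy from the tiny value near $t=0$ to the large value $E$ near $t=\tfrac12$: integrating the energy identity $\tfrac{d}{dt}\int_{T^3}|v|^2 = 2\int_{T^3}\theta\, v_3$ (valid here only formally, so in practice one reads off the analogous relation from the iteration's energy bookkeeping) and applying Cauchy--Schwarz gives $E - o(1)=\int_0^{1/2}\!\!\int \theta v_3 \le (\int_0^1\!\int|\theta|^2)^{1/2}(\int_0^1\!\int|v|^2)^{1/2}$, and since $\int_0^1\!\int|v|^2=\int_0^1 e(t)\,dt$ is itself controlled by $E$, we get $\int_0^1\!\int|\theta|^2\gtrsim E^{1/2}\cdot E^{1/2}/E^{1/2}$... which requires a more careful choice. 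The honest statement is: choose $e$ so that its \emph{total variation} (the total energy pumped in) is at least, say, $2\lambda^2\cdot(\sup\sqrt{e})^{-1}$ isn't quite it either --- rather, choose $e$ so that $\big(\int_0^1|e'(t)|dt\big)^2 \big/ \big(4\int_0^1 e(t)dt\big)\ge \lambda^2$, which is easy since we may take $e$ to oscillate many times between a small and a large value, making $\int|e'|$ as large as we please while keeping $\int e$ moderate; then the Cauchy--Schwarz argument above forces $\int_0^1\!\int_{T^3}|\theta|^2\ge\lambda^2$. The main obstacle, then, is not any single estimate but the bookkeeping of making these three requirements --- smallness of $v$ at $t=0$, the $\sup/\inf$ dichotomy of $v$ on $[\tfrac12,1]$ with the \emph{preassigned} constant $M$, and the forced size of $\|\theta\|_{L^2}$ --- compatible within one run of the iteration; I would expect the proof to reduce Theorem~\ref{t:main 3} to Theorem~\ref{t:main 1} applied to a cleverly engineered pair $(e,\theta_0)$ plus the two soft inequalities (spatial-average pigeonhole for $\inf_x|v|$, and the energy--balance Cauchy--Schwarz for $\|\theta\|_{L^2}$) described above.
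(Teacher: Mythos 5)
There is a genuine gap, and the route you chose is not the paper's. The paper does \emph{not} prove Theorem \ref{t:main 3} by prescribing an energy profile: it uses Proposition \ref{p:iterative 2} (which involves no energy constraint at all) starting from the explicit approximate solution $v_0=tN\sin(N^2x_2)e_1$, $\theta_0=(1-t)N\sin(N^2x_3)$, $p_0=-(1-t)N^{-1}\cos(N^2x_3)$, whose Reynolds and temperature stresses are of size $O(1/N)$ because of the $N^2$ spatial frequency. All the largeness lives in the explicit $v_0,\theta_0$; the iteration only removes an $O(1/N)$ stress, so the total perturbation is bounded by the \emph{absolute} constant $4M$, and every conclusion ($\|v(\cdot,0)\|_0\le 4M$ since $v_0(\cdot,0)=0$; $\sup_x|v|\ge tN-4M$; $\inf_x|v|\le \inf_x|v_0|+4M=4M$ since $v_0$ vanishes on $\{\sin(N^2x_2)=0\}$; $\int\!\!\int|\theta|^2\ge\tfrac12\int\!\!\int|\theta_0|^2-(2\pi)^3(4M)^2$) follows by the triangle inequality after taking $N=\max\{2/\eta,4\lambda,16M\}$.

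Your approach breaks down at two specific points. First, the constant $M$ in Proposition \ref{p: iterative 1} is $M(e)$ and grows like $\sqrt{\sup_t e(t)}$ (the amplitudes $b_{1l}\sim\sqrt{\bar\rho(t)}$ are bounded by $M\sqrt{\delta}$ \emph{uniformly in $t$}), so choosing $e\equiv E\gtrsim\lambda^2$ on $[\tfrac12,1]$ forces $M(e)\gtrsim\lambda$; the resulting bounds $\|v(\cdot,0)\|_0\le 4M(e)$ and $\inf_x|v|\le 4M(e)$ are then useless for a preassigned $M$ independent of $\lambda$. Second, none of your proposed mechanisms for $\inf_x|v(\cdot,t)|\le 4M$ works: the pigeonhole bound gives only $\sqrt{E/|T^3|}\sim\lambda$ (you concede this); the claim that the building blocks ``have zeros in every cell'' is false here, since a single Beltrami-type wave $b(x,t)\bigl(B_ke^{i\lambda k\cdot x}+B_{-k}e^{-i\lambda k\cdot x}\bigr)$ has \emph{constant} modulus $\sqrt2\,|b(x,t)|$ because $|A_k\cos\phi-\tfrac{k}{|k|}\times A_k\sin\phi|^2\equiv|A_k|^2$; and a spatial ``puncture'' is incompatible with the scheme, since the stress to be cancelled is $\bar\rho(t)\mathrm{Id}-\mathring R$, close to a positive multiple of the identity and hence nowhere zero, so the perturbations cannot be made to vanish on an open set. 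Your argument for $\int_0^1\!\int|\theta|^2\ge\lambda^2$ via the energy balance $\tfrac{d}{dt}\int|v|^2=2\int\theta v_3$ is also inadmissible: that identity fails for these weak solutions (non-conservation of energy is the very point of the construction), and your own quantitative attempt does not close. The fix in all three cases is the paper's: make $v_0$ and $\theta_0$ explicitly large and high-frequency so the iteration only contributes a uniformly $O(M)$ correction.
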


\begin{Remark}
For the Boussinesq system on $T^3$, even the initial velocity is small, the oscillation of velocity after sometime could be as large as possible if we have enough thermo in the systems.
\end{Remark}

The result of last theorem coincide with some nature observations and may be helpful in understanding some nature phenomenon.

\begin{Remark}
The above theorems also hold for the two-dimensional Boussinesq system on $T^2$.
\end{Remark}

Finally, we brief some main ideas in our proof. In \cite{CDL2}, they make use of a special families of stationary, plane wave solutions of Euler equation--Beltrami flow--which allows for the control of interference terms between different waves in the construction. For the Boussinesq equations, we don't know the existence of the analogous families of stationary, plane wave solutions. Following an idea in \cite{NASH,ISV2}, we make use of a multi-steps iteration scheme and one-dimensional oscillation. That is, in each step, we add several plane waves which oscillate along the same direction with different frequency. Thus we can remove one component of stress error in each step. To reduce the whole stress errors, we then divide into several steps and finish it one by one. Moreover, in Boussinesq equations, the velocity and temperature are coupled together, so we need to reduce two stress errors simultaneously. To achieve this, we first extend the geometric lemma proved in \cite{CDL2} and add an associated plane wave in the velocity and temperature simultaneously in each step. The consistence is important for us to reduce the temperature stress errors in our construction of continuous temperature.

\section{Proof of main result and Plan of the paper}

As \cite{CDL2}, the proof of Theorem \ref{t:main 1}, Theorem \ref{t:main 2} and Theorem \ref{t:main 3} will be achieved through an iteration procedure. In this and subsequent section, $\mathcal{S}_0^{3\times3}$  denotes the vector space of
symmetric trace-free $3\times3$ matrices.
\begin{Definition}\label{d:boussinesq reynold}
Assume $v,p,\theta,\mathring{R},f$ are smooth functions on $T^{3}\times[0,1]$ taking values,~respectively,~in $R^{3},R,R,\mathcal{S}_{0}^{3\times3},R^{3}$.
We say that they solve the Boussinesq-Reynolds system (with or without heat source) if
\begin{equation}\label{d:boussinesq reynold}
\begin{cases}
\partial_tv+{\rm div}(v\otimes v)+\nabla p=\theta e_{3}+{\rm div}\mathring{R}\\
{\rm div}v=0\\
\theta_{t}+{\rm div}(v\theta)=h+{\rm div}f.
\end{cases}
\end{equation}
\end{Definition}
We now state the first main proposition of the paper,~of which Theorem \ref{t:main 1} and Theorem  \ref{t:main 2}  are implied directly. The proposition is also an extension of the corresponding result given in \cite{CDL2}.
\begin{proposition}\label{p: iterative 1}
Let $e(t)$ be as in Theorem \ref{t:main 1} and Theorem \ref{t:main 2}. ~Then we can choose two positive constants $\eta$ and $M$ only dependent of $e(t)$,~such that the following properties hold:

For any $0< \delta\leq1$,~if $(v_,~ p, ~\theta, ~\mathring{R}, ~f)\in C^{\infty}([0,1]\times T^{3})$ solve Boussinesq-Reynolds system (\ref{d:boussinesq reynold}) and
\begin{align}
    \frac{3\delta}{4}e(t)\leq e(t)-\int_{T^{3}}|v|^{2}(x,t)dx\leq&\frac{5\delta}{4}e(t),\quad \forall t \in [0,1], \label{e:erengy initial}\\
    \sup_{x,t}|\mathring{R}(x,t)|\leq& \eta\delta,\label{e:reynold initial}\\ \sup_{x,t}|f(x,t)|\leq& \eta\delta,\label{e:reynold initial 2}
\end{align}
then we can construct new functions $(\tilde{v},~\tilde{p},~\tilde{\theta},~\mathring{\tilde{R}},~\tilde{f})\in C^{\infty}([0,1]\times T^{3})$ , they also solve Boussinesq-Reynolds system (\ref{d:boussinesq reynold}) and satisfy
\begin{align}
    \frac{3\delta}{8}e(t)\leq e(t)-\int_{T^{3}}|\tilde{v}|^{2}(x,t)dx\leq&\frac{5\delta}{8}e(t),\quad \forall t \in [0,1],\\
     \sup_{x,t}|\mathring{\tilde{R}}(x,t)|\leq&\frac{\eta\delta}{2} ,\\
     \sup_{x,t}|\tilde{f}(x,t)|\leq& \frac{\eta\delta}{2},\\
    \sup_{x,t}|\tilde{v}(x,t)-v(x,t)| \leq& M\sqrt{\delta},\\
    \sup_{x,t}|\tilde{\theta}(x,t)-\theta(x,t)| \leq& M\sqrt{\delta},\\
    \sup_{x,t}|\tilde{p}(x,t)-p(x,t)| \leq& M\sqrt{\delta}.
    \end{align}
\end{proposition}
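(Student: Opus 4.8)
The plan is to perform one step of a Nash--type convex-integration scheme as in \cite{CDL2}, but, lacking a Boussinesq analogue of the Beltrami flows, to realize the correction as a finite chain of a fixed number $N$ of sub-steps, in each of which one adds a perturbation oscillating along a \emph{single} spatial direction and thereby kills one scalar component of the two stress errors. In every sub-step the velocity and the temperature are perturbed \emph{simultaneously and consistently}: the high-frequency self-interaction of the velocity wave removes a piece of $\mathring{R}$, while the joint high-frequency interaction of the velocity and temperature waves removes a piece of $f$. First I would mollify $(v,\theta,\mathring{R},f)$ at a length scale $\ell$ (a small power of a large frequency $\lambda$), in space and slightly in time along the flow, to obtain smooth $(v_\ell,\theta_\ell,\mathring{R}_\ell,f_\ell)$ solving the Boussinesq--Reynolds system \eqref{d:boussinesq reynold} up to a mollification error negligible after dividing by the relevant powers of $\ell$, with $\|\mathring{R}_\ell\|_0\le\eta\delta$, $\|f_\ell\|_0\le\eta\delta$, and all higher seminorms bounded by negative powers of $\ell$. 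Set $\rho(t)$ to be the suitably normalized energy gap $\tfrac13\bigl(e(t)-\int_{T^3}|v_\ell|^2\bigr)$; the hypotheses \eqref{e:erengy initial}--\eqref{e:reynold initial 2} give $c\,\delta\,e(t)\le\rho(t)\le C\,\delta\,e(t)$ with $c,C$ depending only on $e$, and, if $\eta$ is chosen small depending only on $e$, then $\|\mathring{R}_\ell\|_0\le r_0\,\rho(t)$, with $r_0$ the threshold of the geometric lemma.

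The building blocks come from an extension of the geometric lemma of \cite{CDL2}, applied one layer at a time. In the $i$-th sub-step one has a coarse field $(v_i,\theta_i,\mathring{R}_i,f_i)$ and picks a fixed direction $k^{(i)}\in\mathbb{Z}^3$ so as to ``see'' one component of $\mathring{R}_i$ and of $f_i$; the extended geometric lemma then yields unit vectors $\xi_j^{(i)}\perp k^{(i)}$ and smooth coefficients $a_j^{(i)}\ge 0$, $b_j^{(i)}$ such that the parts of $\rho\,\mathrm{Id}-\mathring{R}_i$ and of $f_i$ to be removed here equal $\sum_j (a_j^{(i)})^2\,\xi_j^{(i)}\otimes\xi_j^{(i)}$ and $\sum_j a_j^{(i)} b_j^{(i)}\,\xi_j^{(i)}$. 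One then adds to the velocity $w^{(i)}=\sum_j a_j^{(i)}\,\xi_j^{(i)}\cos(\mu_j^{(i)}\,k^{(i)}\cdot\Phi)+(\text{corrector})$ and to the temperature $\vartheta^{(i)}=\sum_j b_j^{(i)}\cos(\mu_j^{(i)}\,k^{(i)}\cdot\Phi)$, where all waves of the sub-step oscillate along the single direction $k^{(i)}$ but with \emph{distinct, well-separated} frequencies $\mu_1^{(i)}<\mu_2^{(i)}<\cdots$, the phases $k^{(i)}\cdot\Phi$ are transported by the coarse flow (so the leading transport term drops out), and the corrector of size $O((\mu_1^{(i)})^{-1})$ restores $\mathrm{div}\,w^{(i)}=0$ exactly. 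Distinctness of the $\mu_j^{(i)}$ is exactly what makes the fast-variable averages diagonal: $\langle w^{(i)}\otimes w^{(i)}\rangle=\tfrac12\sum_j(a_j^{(i)})^2\,\xi_j^{(i)}\otimes\xi_j^{(i)}$ and $\langle w^{(i)}\vartheta^{(i)}\rangle=\tfrac12\sum_j a_j^{(i)} b_j^{(i)}\,\xi_j^{(i)}$, since the $j$-th velocity wave and the $j$-th temperature wave share the frequency $\mu_j^{(i)}k^{(i)}$. Hence $\mathrm{div}(w^{(i)}\otimes w^{(i)})$ cancels the chosen part of $\mathring{R}_i$ modulo a gradient (absorbed into the pressure) and $\mathrm{div}(w^{(i)}\vartheta^{(i)})$ cancels the chosen part of $f_i$, in both cases modulo purely oscillatory remainders that become the new errors handed to sub-step $i+1$.

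With $\tilde v=v+\sum_i w^{(i)}$, $\tilde\theta=\theta+\sum_i\vartheta^{(i)}$, and the trace of $\sum_i w^{(i)}\otimes w^{(i)}$ together with the $\nabla\rho$ terms absorbed into $\tilde p$, a direct computation writes the new Reynolds stress $\mathring{\tilde R}$ as a sum of: a transport error (reduced to derivatives of the slow amplitudes, the phases being advected by the coarse flow), a Nash error $\approx\sum_i w^{(i)}\cdot\nabla v_i$, an oscillation error from the interaction of distinct waves and from the $x$-dependence of the amplitudes (this is where the frequencies $\mu_j^{(i)}k^{(i)}$ must be well separated, so that every product of two modes is genuinely high frequency), corrector errors, the mollification error, and the buoyancy error $\sum_i\vartheta^{(i)}e_3$ --- admissible precisely because each $\vartheta^{(i)}$ is purely high frequency, so its antidivergence has size $O(\|\vartheta^{(i)}\|_0/\mu_1^{(i)})$. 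Symmetrically, the new temperature flux $\tilde f$ is a sum of a transport error, a Nash-type error $\approx\sum_i w^{(i)}\cdot\nabla\theta_i$, an oscillation error from $\mathrm{div}(v_i\vartheta^{(i)})+\mathrm{div}(w^{(i)}\vartheta^{(i)})$, corrector and mollification errors, and the heat-source mollification error. All the high-frequency pieces are routed through the inverse-divergence operator $\mathcal{R}$, which gains powers of the frequencies. Choosing $\ell$ a suitable small power of $\lambda$, the frequencies $1\ll\mu_1^{(1)}\ll\cdots\ll\mu_{\max}^{(N)}$ widely separated (say comparable to successive powers of $\lambda$), and $\lambda$ large depending on $\delta$, $e$, $\|v\|_0$, $\|\theta\|_0$ and the $C^1$ norms of the data, makes every one of these errors, in every sub-step, at most $\eta\delta/2$. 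Since $\|w^{(i)}\|_0\lesssim\sqrt{\rho}\lesssim\sqrt\delta$ and $\|\vartheta^{(i)}\|_0\lesssim\eta\sqrt\delta$, a constant $M$ depending only on $e$ (through $\sup_t\rho$) then dominates $\|\tilde v-v\|_0$, $\|\tilde\theta-\theta\|_0$ and $\|\tilde p-p\|_0$. A final small modulation of $\rho(t)$ --- a correction wave carrying no stress to leading order, exactly as in \cite{CDL2} --- pins the energy in the band $[\tfrac{3\delta}{8}e(t),\tfrac{5\delta}{8}e(t)]$.

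The main obstacle is the coupling between $v$ and $\theta$, which enters in two ways. First, the momentum equation is \emph{forced} by $\tilde\theta e_3$, so the temperature perturbation feeds back into the velocity error and cannot be chosen freely after the velocity perturbation; I circumvent this by making $\vartheta^{(i)}$ oscillate at the same frequency as $w^{(i)}$, so that $\vartheta^{(i)}e_3$ has a small antidivergence, and by reducing $f$ (hence $\|\vartheta^{(i)}\|_0$) to the same orders $\eta\delta$ and $\sqrt\delta$ as $\mathring{R}$ and $w^{(i)}$. Second, cancelling $f_i$ forces the velocity and temperature waves to be \emph{consistent} --- the same directions $\xi_j^{(i)}$ and amplitudes $a_j^{(i)}$ must enter $\langle w^{(i)}\otimes w^{(i)}\rangle$ and $\langle w^{(i)}\vartheta^{(i)}\rangle$ --- which is precisely why the geometric lemma of \cite{CDL2} must be upgraded to a \emph{simultaneous} decomposition of a matrix and a vector along a common family of directions, and this dictates how many sub-steps $N$ are needed. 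The remaining difficulty is bookkeeping: threading all the oscillation and commutator estimates through the whole chain of $N$ sub-steps --- where each sub-step perturbs the output of the previous one and generates its own fresh errors --- while keeping both $M$ and $N$ independent of $\delta$.
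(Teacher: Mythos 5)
Your proposal follows essentially the same route as the paper: a chain of sub-steps, each adding velocity and temperature waves oscillating along a single direction with distinct, dyadically separated frequencies, with the matrix and the vector stress decomposed \emph{simultaneously} by an extended geometric lemma so that $\langle w\otimes w\rangle$ removes one block of $\mathring{R}$ while $\langle w\,\vartheta\rangle$ removes one block of $f$ in each sub-step; the paper implements the transport with the partition of unity $\alpha_l(\mu v)$ and the discretized velocities $l/\mu$ of \cite{CDL2} rather than flow maps, and it does not mollify (the data are already $C^\infty$ and the constants $C_n$ are allowed to depend on them), but these are inessential variants. One quantitative point to fix: as written, your $\rho(t)=\tfrac13\bigl(e(t)-\int_{T^3}|v_\ell|^2\bigr)$ targets the \emph{whole} energy gap, which would drive $e(t)-\int_{T^3}|\tilde v|^2$ to zero and violate the lower bound $\tfrac{3\delta}{8}e(t)$; as in \cite{CDL2} and in the paper one takes $\bar\rho(t)=\tfrac{1}{(2\pi)^{3}}\bigl(e(t)(1-\tfrac{\delta}{2})-\int_{T^3}|v|^2\bigr)$ so that only half the gap is filled, after which the band $[\tfrac{3\delta}{8}e(t),\tfrac{5\delta}{8}e(t)]$ follows from the stationary-phase estimates alone, with no separate energy-correction wave needed.
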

We will prove Proposition \ref{p: iterative 1} in the subsequent sections.~First,~we give a proof of Theorem \ref{t:main 1} and Theorem \ref{t:main 2} by using this proposition.
\begin{proof}[Proof of Theorem 1.1]
In this case, $h=0$.
We first set
\begin{align}
v_{0}:=0,~ \theta_0\in \Theta, ~p_{0}:=\int_{0}^{x_3}\theta_0(y)dy,~\mathring{R}_{0}:=0,~f_{0}:=0\nonumber
\end{align}and $\delta=1$.
Obviously, they solve Boussinesq-Reynolds system (\ref{d:boussinesq reynold}) and satisfy the following estimates
\begin{align}
\frac{3\delta}{4}e(t)\leq e(t)-\int_{T^{3}}|v_{0}|^{2}(x,t)dx\leq&\frac{5\delta}{4}e(t),\qquad \forall t \in [0,1]\nonumber\\
    \sup_{x,t}|\mathring{R}_{0}(x,t)|=0 \quad(\leq& \eta\delta),\nonumber\\
     \sup_{x,t}|f_{0}(x,t)|=0 \quad(\leq& \eta\delta).\nonumber
\end{align}
Then  using Proposition \ref{p: iterative 1} iteratively, we can construct a sequence $(v_{n},~p_{n},~\theta_{n},~\mathring{R_{n}},~f_{n})$,
which solve (\ref{d:boussinesq reynold}) and satisfy
\begin{eqnarray}
    \frac{3}{4}\frac{e(t)}{2^{n}}\leq e(t)-\int_{T^{3}}|v_{n}|^{2}(x,t)dx\leq&\frac{5}{4}\frac{e(t)}{2^{n}},\quad \forall t \in [0,1]\label{e:energy_final}\\
     \sup_{x,t}|\mathring{R_{n}}(x,t)|\leq\frac{\eta}{2^{n}} ,\label{e:stress_final}\\\sup_{x,t}|f_{n}(x,t)|\leq \frac{\eta}{2^{n}},\\
    \sup_{x,t}|v_{n+1}(x,t)-v_{n}(x, t)| \leq M\sqrt{\frac{1}{2^{n}}},\label{e:velocity_final}\\
    \sup_{x,t}|\theta_{n+1}(x,t)-\theta_{n}(x,t)| \leq M\sqrt{\frac{1}{2^{n}}},\label{e:temperature_final}\\
    \sup_{x,t}|p_{n+1}(x,t)-p_{n}(x,t)| \leq M\sqrt{\frac{1}{2^{n}}}.\label{e:pressure_final}
\end{eqnarray}
Therefore from (\ref{e:stress_final})-(\ref{e:pressure_final}), we know that $(v_{n},~p_{n},~\theta_{n},~\mathring{R}_{n},~f_{n})$ are Cauchy sequence in $C(T^{3}\times[0,1])$, therefore there exist
\[\begin{aligned}
    (v,p,\theta)\in C(T^{3}\times[0,1])
\end{aligned}\]
such that
\[\begin{aligned}
    v_{n}\rightarrow v,\qquad p_{n}\rightarrow p,\qquad \theta_{n}\rightarrow \theta,
    \qquad \mathring{R_{n}}\rightarrow 0,\qquad f_{n}\rightarrow 0.
\end{aligned}\]
in $C(T^{3}\times[0,1])$ as $n \rightarrow \infty$.

Moreover,~by \eqref{e:energy_final},
$$ e(t)=\int_{T^{3}}|v|^{2}(x,t)dx \qquad \forall t\in[0,1].$$
By (2.16),
$$\|\theta-\theta_0\|_0\leq M\sum\limits_{n=0}^{\infty}\sqrt{\frac{1}{2^{n}}}< 4M.$$
Passing into the limit in (\ref{d:boussinesq reynold}), we conclude that $v,~p,~\theta$ solve (\ref{e:boussinesq equation}) in the sense of distribution.
\end{proof}

\begin{proof}[Proof of Theorem 1.2]
For any $h\in\Xi$, there are two functions $a\in C^\infty$ and $b\in C^\infty$ such that $h(t,x_3)=a(t)b(x_3)$.
Thus if
we set
\begin{align}
v_{0}:=0,~\quad \theta_0:=\int_0^ta(s)ds b(x_3), ~\quad p_{0}:=\int_0^ta(s)ds \int_{0}^{x_3}b(y)dy,~\quad\mathring{R}_{0}:=0,~\quad f_{0}:=0\nonumber
\end{align}and $\delta=1$.
Then they solve (\ref{d:boussinesq reynold}) and satisfy the following estimates
\begin{align}
\frac{3\delta}{4}e(t)\leq e(t)-\int_{T^{3}}|v_{0}|^{2}(x,t)dx\leq&\frac{5\delta}{4}e(t),\qquad \forall t \in [0,1]\nonumber\\
    \sup_{x,t}|\mathring{R}_{0}(x,t)|=0(\leq& \eta\delta),\nonumber\\
     \sup_{x,t}|f_{0}(x,t)|=0(\leq& \eta\delta).\nonumber
\end{align}
Then we can complete our proof by following the proof of Theorem \ref{t:main 1} line by line. We omit the detail here.
\end{proof}

To prove Theorem \ref{t:main 3}, we again make use of iterative techniques. We state the following iterative proposition, which implies Theorem \ref{t:main 3}.
\begin{proposition}\label{p:iterative 2}
There exist two absolute constants $M$ and $\eta$ such that the following properties hold:

For any $0< \delta\leq1$,~if $(v_,~ p, ~\theta, ~\mathring{R}, ~f)\in C^{\infty}([0,1]\times T^{3})$ solve Boussinesq-Reynolds system (\ref{d:boussinesq reynold}) and
\begin{align}
    \sup_{x,t}|\mathring{R}(x,t)|\leq& \eta\delta,\\ \sup_{x,t}|f(x,t)|\leq& \eta\delta,
\end{align}
then we can construct new functions $(\tilde{v},~\tilde{p},~\tilde{\theta},~\mathring{\tilde{R}},~\tilde{f})\in C^{\infty}([0,1]\times T^{3})$, they also solve (\ref{d:boussinesq reynold}) and satisfy
\begin{align}
     \sup_{x,t}|\mathring{\tilde{R}}(x,t)|\leq&\frac{\eta}{2} \delta,\\
     \sup_{x,t}|\tilde{f}(x,t)|\leq& \frac{\eta}{2}\delta,\\
    \sup_{x,t}|\tilde{v}(x,t)-v(x,t)| \leq& M\sqrt{\delta},\\
    \sup_{x,t}|\tilde{\theta}(x,t)-\theta(x,t)| \leq& M\sqrt{\delta},\\
    \sup_{x,t}|\tilde{p}(x,t)-p(x,t)| \leq& M\sqrt{\delta}.
\end{align}
\end{proposition}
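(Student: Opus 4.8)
The plan is to carry out the Nash-type convex-integration step that also proves Proposition \ref{p: iterative 1}, only without the kinetic-energy bookkeeping: since no function $e(t)$ is prescribed here, the amplitudes of the added waves are essentially free and need only be of size $\sim\sqrt{\eta\delta}$, which is why the constant $M$ can be taken absolute. First I would fix a small mollification length $\ell$ and a large oscillation frequency $\lambda$ (to be specified at the end) and mollify $(v,p,\theta,\mathring R,f)$ in space and slightly in time to get smooth $(v_\ell,p_\ell,\theta_\ell,\mathring R_\ell,f_\ell)$ solving $(\ref{d:boussinesq reynold})$ with $\mathring R_\ell=(\mathring R)_\ell+(v\otimes v)_\ell-v_\ell\otimes v_\ell$ and $f_\ell=(f)_\ell+(v\theta)_\ell-v_\ell\theta_\ell$ (traces thrown into $p_\ell$), so that $\|\mathring R_\ell\|_0\le\eta\delta+C\ell^2$, $\|f_\ell\|_0\le\eta\delta+C\ell^2$, together with the usual bounds $\|v_\ell\|_{k+1}+\|\theta_\ell\|_{k+1}\lesssim\ell^{-k}$ and $\|\mathring R_\ell\|_{k}+\|f_\ell\|_k\lesssim\ell^{-k}$, all implied constants allowed to depend on the given smooth data.

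The core is the simultaneous reduction of both residual stresses by finitely many one-dimensional oscillation steps $j=1,\dots,N$, as announced in the introduction. In step $j$ I add to the velocity a finite family of waves oscillating along a single direction $\nu_j$ at frequencies that are distinct large multiples of $\lambda$ (so the self-interactions do not resonate), of the form $w_j=\sum_k a_{j,k}(x,t)\,\phi_{j,k}(\lambda\,x\cdot\nu_j)\,\xi_{j,k}+(\text{corrector})$, where the corrector restores $\mathrm{div}\,\tilde v=0$, the polarizations satisfy $\xi_{j,k}\cdot\nu_j=0$, and the amplitudes $a_{j,k}$ are produced from $\mathring R_\ell$ via the extended geometric lemma so that the slow part of $w_j\otimes w_j$ is $\rho_j\,\Id$ minus the $j$-th piece of the running Reynolds stress; simultaneously I add to the temperature a consistent family $\vartheta_j=\sum_k b_{j,k}(x,t)\,\phi_{j,k}(\lambda\,x\cdot\nu_j)$ with the \emph{same} phase functions, whose amplitudes $b_{j,k}$ are chosen, by the same lemma, so that the slow part of $w_j\vartheta_j$ equals the $j$-th piece of $-f_\ell$. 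After $N$ steps both residual stresses are cancelled at leading order; one sets $\tilde v=v_\ell+\sum_j w_j$, $\tilde\theta=\theta_\ell+\sum_j\vartheta_j$, and chooses $\tilde p$ to absorb the oscillatory gradient terms.

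The new errors are defined through an inverse-divergence operator $\CR$. Thus $\mathring{\tilde R}$ absorbs the transport/Nash error $\partial_t w_j+\mathrm{div}(v_\ell\otimes w_j+w_j\otimes v_\ell)$, the high-frequency remainder of $\mathrm{div}(w_j\otimes w_j)$ (whose leading oscillatory term is either a gradient, passed to $\tilde p$, or drops out by the transversality $\xi_{j,k}\cdot\nu_j=0$), the cross terms $\mathrm{div}(w_i\otimes w_j)$ with $i\ne j$, the mollification commutator, and — the genuinely new piece — the buoyancy force $-\big(\sum_j\vartheta_j\big)e_3$, which is not a gradient and so is carried into $\mathring{\tilde R}$ by $\CR$; similarly $\tilde f$ absorbs $\partial_t\vartheta_j+\mathrm{div}(v_\ell\vartheta_j+w_j\theta_\ell)$, the high-frequency remainder of $\mathrm{div}(w_j\vartheta_j)$, and the mollification commutator. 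Since $\CR$ applied to anything oscillating at frequency $\gtrsim\lambda$ gains a factor $\lambda^{-1}$, stationary phase shows that every surviving term in $\mathring{\tilde R}$ and $\tilde f$ is bounded by a mollification contribution (controlled by $\ell$ times powers of the $C^1$-norms of the data) plus an oscillation/transport contribution that $\to0$ as $\lambda\to\infty$ with $\ell$ fixed, while the waves contribute $\lesssim\sqrt\eta\,\sqrt\delta$ to each of $\|\tilde v-v\|_0,\|\tilde\theta-\theta\|_0,\|\tilde p-p\|_0$. One therefore fixes $\eta$ a small absolute constant and then $M$ an absolute constant with the wave contributions to the three perturbation bounds $\le\tfrac{1}{2}M\sqrt\delta$; next picks $\ell$ small enough, depending on the given smooth $(v,p,\theta)$, that every mollification contribution is $\le\tfrac{1}{4}\eta\delta$ and $\le\tfrac{1}{2}M\sqrt\delta$; and finally takes $\lambda$ large enough, depending on $\ell$ and the data, that the oscillation and transport contributions to $\|\mathring{\tilde R}\|_0$ and $\|\tilde f\|_0$ are $\le\tfrac{1}{4}\eta\delta$. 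All asserted inequalities follow, and since neither $M$ nor $\eta$ was affected by the data they are absolute.

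The \emph{main obstacle} is the coupling between velocity and temperature, which appears in two dual guises. In the momentum equation the buoyancy term $\vartheta e_3$ is a new low-order force of size $\|\vartheta\|_0\sim\sqrt\delta$ that is not of gradient form, so it cannot be hidden in the pressure; it must be carried by $\mathring{\tilde R}$ via $\CR$, which is only permissible because $\vartheta$ has been inserted as a high-frequency oscillation at frequency $\lambda$, buying the saving $\lambda^{-1}$ — this is exactly why the temperature waves must oscillate, and oscillate in tune with the velocity waves, rather than being chosen freely. Dually, in the temperature equation the cross term $\mathrm{div}(w_j\vartheta_j)$ is required to have a \emph{prescribed} slow part in order to cancel $f_\ell$, which ties the phase of $\vartheta_j$ to that of $w_j$; verifying that, with amplitudes controlled by $\mathring R_\ell$ and $f_\ell$, one can simultaneously satisfy the quadratic condition on $w_j$ (Reynolds cancellation) and the bilinear condition on $(w_j,\vartheta_j)$ (temperature cancellation) is precisely the content of the extended geometric lemma and is the step demanding the most care. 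The remainder — mollification commutator bounds, stationary phase for the oscillatory products, and the estimates for $\CR$ — is routine and parallels \cite{CDL2}.
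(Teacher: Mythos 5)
Your proposal follows essentially the same route as the paper: the iteration step for Proposition \ref{p:iterative 2} is carried out exactly as for Proposition \ref{p: iterative 1}, with the amplitude now built from $\delta\,\Id-\mathring R$ instead of $\bar\rho(t)\Id-\mathring R$ (which is why $\eta=r_0/2$ and $M$ become absolute constants), the temperature waves phased with the velocity waves via the extended geometric lemma, and the oscillating buoyancy term carried into $\mathring{\tilde R}$ by $\mathcal R$ with a gain of $\lambda^{-1}$. The only deviation is your preliminary mollification of $(v,p,\theta,\mathring R,f)$, which the paper does not need — the data are already $C^\infty$ and the intermediate constants $C_n$ are allowed to depend on them, smallness being obtained solely by taking $\mu_n,\lambda_n$ large at the end.
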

\begin{proof}[Proof of Theorem \ref{t:main 3}]
First,
we set
\begin{align}
v_0=&\left(\begin{array}{c}
tN sin(N^2x_2)\\
0\\
0
\end{array}\right),\quad
\mathring{R_0}=
\left(\begin{array}{ccc}
0  & -\frac{cos(N^2x_2)}{N} & 0\\
-\frac{cos(N^2x_2)}{N} & 0 & 0\\
0 &  0  &  0
\end{array}\right),\quad
f_0=\left(\begin{array}{c}
0\\
0\\
\frac{cos(N^2x_3)}{N}
\end{array}\right),\nonumber\\
p_{0}=&-(1-t)\frac{cos(N^2x_3)}{N},
 \qquad \qquad \theta_0=(1-t)Nsin(N^2x_3),\nonumber
\end{align}and $\delta=1$.
Then they solve (\ref{d:boussinesq reynold}). If we take $N\geq \frac{2}{\eta}$, then they satisfy the following estimates
\begin{align}
    \sup_{x,t}|\mathring{R}_{0}(x,t)|\leq& \eta\delta,\nonumber\\
     \sup_{x,t}|f_{0}(x,t)|\leq& \eta\delta.\nonumber
\end{align}
By using Proposition \ref{p:iterative 2} iteratively, we can construct a sequence $(v_{n},~p_{n},~\theta_{n},~\mathring{R_{n}},~f_{n})$,
which solve (\ref{d:boussinesq reynold}) and satisfy
\begin{align}
     \sup_{x,t}|\mathring{R_{n}}(x,t)|\leq&\frac{\eta}{2^{n}} ,\\\sup_{x,t}|f_{n}(x,t)|\leq& \frac{\eta}{2^{n}},\\
    \sup_{x,t}|v_{n+1}(x,t)-v_{n}(x, t)| \leq& M\sqrt{\frac{1}{2^{n}}},\label{i:velocity final}\\
    \sup_{x,t}|\theta_{n+1}(x,t)-\theta_{n}(x,t)| \leq& M\sqrt{\frac{1}{2^{n}}},\label{i:temperture final}\\
    \sup_{x,t}|p_{n+1}(x,t)-p_{n}(x,t)| \leq& M\sqrt{\frac{1}{2^{n}}}.
\end{align}
Then we know that $(v_{n},~p_{n},~\theta_{n},~\mathring{R}_{n},~f_{n})$ are Cauchy sequence in $C(T^{3}\times[0,1])$, therefore there exist
\[\begin{aligned}
    (v,p,\theta)\in C(T^{3}\times[0,1])
\end{aligned}\]
such that
\[\begin{aligned}
    v_{n}\rightarrow v,\qquad p_{n}\rightarrow p,\qquad \theta_{n}\rightarrow \theta,
    \qquad \mathring{R_{n}}\rightarrow 0,\qquad f_{n}\rightarrow 0,
\end{aligned}\]
in $C(T^{3}\times[0,1])$ as $n \rightarrow \infty$.

By (\ref{i:velocity final}) and (\ref{i:temperture final}), we have
$$\|v-v_0\|_0\leq M\sum\limits_{n=0}^{\infty}\sqrt{\frac{1}{2^{n}}}< 4M,$$
and
$$\|\theta-\theta_0\|_0\leq M\sum\limits_{n=0}^{\infty}\sqrt{\frac{1}{2^{n}}}< 4M.$$
Finally, let $\lambda$ be as in Theorem \ref{t:main 3} and take $N={\rm max}\{\frac{2}{\eta}, 4\lambda, 16M\}$, then for $t\in[\frac{1}{2},1]$
\begin{align}
{\sup_{x\in T^3}}|v(x,t)|\geq {\sup}_{x\in T^3}|v_0(x,t)|-4M\geq \frac{N}{4}\geq\lambda,\nonumber\\
{\inf}_{x\in T^3}|v(x,t)|\leq {\inf}_{x\in T^3}|v_0(x,t)|+4M\leq4M.\qquad \nonumber
\end{align}
Moreover, since $v_{0}(x,0)=0$ , we have
\begin{align}
\|v(x,0)\|_0\leq 4M. \nonumber
\end{align}
A direct calculation gives,
\begin{align}
\int_0^1\int_{T^3}|\theta_0|^2(x,t)dxdt=\frac{4\pi^3}{3}N^2,\nonumber
\end{align}
therefore
 \begin{align}
 &\int_0^1\int_{T^3}|\theta|^2(x,t)dxdt\nonumber\\
 \geq&\frac{1}{2}\int_0^1\int_{T^3}|\theta_0|^2(x,t)dxdt-\int_0^1\int_{T^3}|\theta-\theta_0|^2(x,t)dxdt\nonumber\\
 \geq&\frac{2\pi^3}{3}N^2-(2\pi)^3(4M)^2\nonumber\\
 \geq&\lambda^2.\nonumber
 \end{align}
Passing into the limit in (\ref{d:boussinesq reynold}) we conclude that $v,~p,~\theta$ solve (\ref{e:boussinesq equation}) in the sense of distribution.
\end{proof}


\subsection{Outline of the proof of propositions}

\indent

The constructions of the functions $\tilde{v}, \tilde{\theta}$ consist of several steps. In the first steps, we add perturbations to $v_0, \theta_0$  and get new functions $v_{01}, \theta_{01}$ as
 \begin{align}
 v_{01}=&v_0+w_{1o}+w_{1oc}:=v_0+w_1,\nonumber \\
 \theta_{01}=&\theta_0+\chi_1.\nonumber
 \end{align}
where $w_{1o}, w_{1oc}, \chi_1$ are highly oscillated functions and given by explicit formulas. We introduce two parameters $\mu_1,~\lambda_1$ in the construction of perturbation, which satisfy $\mu_1,~\lambda_1,~\frac{\lambda_1}{\mu_1}\in {\rm N}$. After the construction of $v_{01}, \theta_{01}$, we then focus on constructing  functions $R_{01}, p_{01}$ and $f_{01}$ which satisfies the desired estimate and solves the system (\ref{d:boussinesq reynold}). After the first step, the stress becomes smaller in the following sense: if
\begin{align}
\rho(t)Id-\mathring{R_0}=&\sum\limits_{i=1}^{L}a_i^2\Big(Id-\frac{k_i}{|k_i|}\otimes\frac{k_i}{|k_i|}\Big),\nonumber\\
f_0=&\sum\limits_{i=1}^{3}b_iA_{k_i},\nonumber
\end{align}
then
\begin{align}
R_{01}=&\sum\limits_{i=2}^{L}a_i^2\Big(Id-\frac{k_i}{|k_i|}\otimes\frac{k_i}{|k_i|}\Big)+\delta R_{01},\nonumber\\
f_{01}=&\sum\limits_{i=2}^{3}b_iA_{k_i}+\delta f_{01}.\nonumber
\end{align}
where $\delta R_{01}, \delta f_0$ can be arbitrary small through the appropriate choice on $\mu_1$ and $\lambda_1$.

Repeating the above process, finally we can obtain the needed functions $(\tilde{v},~\tilde{p},~\tilde{\theta},~\mathring{\tilde{R}},~\tilde{f}).$\\

The rest of paper is organized as follows. In section 3, we do some preliminaries. We extend the Gemmetric Lemma in \cite{CDL2} and construct a new operator $\mathcal{G}$ which will be be used in order to deal the
stresses arising from iteration scheme. Then we perform the first step in next several sections. In section 4, we not only define the perturbations $w_{1o}, w_{1oc}, \chi_1$ and new stresses $R_{01}, f_{01}$, but also determine the constants $\eta$ and $M$ appeared in the estimates in Proposition \ref{p: iterative 1}. In section 5 and section 6, we calculate the main parts of $R_{01}, f_{01}$ and prove the relevant estimates of the various terms involved in the construction separately. After completing the first step, in section 7, section 8 and section 9 we then construct $v_{0n}, p_{0n}, \theta_{0n}, R_{0n}, f_{0n}$ and prove relevant estimates
by inductions. The construction in section 7 is quite similar to that of the first step given in section 4. We calculate the main error parts $R_{0n}, f_{0n}$ in section 8 and obtain various error estimates in section 9 respectively. Those two sections are also similar to that of section 5 and section 6 respectively. And we obtain the energy estimates in section 10. Finally, in section 11, we give a proof of proposition 2.1 by choosing appropriate parameters $\mu_n,\lambda_n$ for $1\leq n\leq L$. After the proof of proposition \ref{p: iterative 1}, we give a proof of proposition \ref{p:iterative 2} and this is done in section 12.

\section{Preliminaries}

As \cite{CDL2}, we introduction some notations: $R^{3\times 3}$ denotes the space of $3\times 3$ matrices; $\mathcal{S}^{3\times 3}$
 denotes the spaces of $3\times 3$ symmetric matrices
and $\Id$ denotes $3\times 3$ identity matrix .
The matrix norm $|R|:=\max_{1\leq i,j\leq 3}|R_{ij}|$ if $R=(R_{ij})_{3\times 3}$.
\subsection{Geometric Lemma}

\indent

The following lemma is an extension of geometric lemma given in \cite{CDL2} to our case. Within it, we not only represent a prescribed symmetric matrix $R$, but also a prescribed vector simultaneously.

\begin{Lemma}[Geometric Lemma]\label{p:split}
For every $N\in {\NN}$, we can choose $r_{0}>0$ and $\bar{\lambda}>1$ such that the following property holds: There exist pairwise disjoint subsets
\begin{align*}
    \Lambda_{j}\subseteq \{k\in Z^{3}:|k|=\bar{\lambda}\},\quad j\in\{1,\cdots ,N\},
\end{align*}
smooth positive functions
 \begin{align*}
    \gamma_{k}^{(j)}\in C^{\infty}(B_{r_{0}}(Id)),    j\in\{1,\cdots ,N\},~~k\in\Lambda_{j},
\end{align*}
vectors
 \begin{align*}
   A^j_k\in R^3, ~~|A^j_k|=\frac{1}{\sqrt{2}},~~ k\cdot A^j_k=0,~~j\in\{1,\cdots ,N\},~~k\in\Lambda_{j}
 \end{align*}
and smooth functions
 \begin{align*}
    g_{k}^{(j)}\in C^{\infty}(R^{3}), j\in\{1,\cdots,N\},~~k\in\Lambda_{j},
 \end{align*}
such that
\begin{enumerate}
 \item $k\in\Lambda_{j}$ implies $-k\in\Lambda_{j}$ and $\gamma_{k}^{(j)}=\gamma_{-k}^{(j)};$
  \item for every $R \in B_{r_{0}}(Id)$,~ the following identity holds:$$R=\frac{1}{2}\sum\limits_{k\in \Lambda_{j}}(\gamma_{k}^{(j)}(R))^2\Big(Id-\frac{k}{|k|}\otimes\frac{k}{|k|}\Big);$$
   \item for every $f\in C^{\infty}(R^{3})$,~we have the identity
   $$f=\sum_{k\in\Lambda_{j}}g_{k}^{(j)}(f)A^j_{k}.$$
\end{enumerate}

\end{Lemma}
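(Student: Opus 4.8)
The plan is to prove the three claims by building on the classical geometric lemma of De Lellis--Székelyhidi (\cite{CDL2}), which already supplies parts (1)--(2) for a \emph{single} index set $\Lambda$, and then to handle the new ingredient — the simultaneous representation (3) of an arbitrary vector field — together with the bookkeeping needed to obtain $N$ \emph{pairwise disjoint} families. First I would recall the original statement: for a given neighbourhood of $\Id$ one can find a finite set $\Lambda\subseteq\mathbb{Z}^3$ lying on a sphere of radius $\bar\lambda$, closed under $k\mapsto -k$, together with positive smooth coefficient functions $\gamma_k\in C^\infty(B_{r_0}(\Id))$ realizing $R=\frac12\sum_{k\in\Lambda}\gamma_k(R)^2(\Id-\tfrac{k}{|k|}\otimes\tfrac{k}{|k|})$ for all $R\in B_{r_0}(\Id)$. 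To get $N$ disjoint copies, I would invoke this lemma on $N$ disjoint spheres (i.e.\ with $N$ different radii $\bar\lambda_1<\cdots<\bar\lambda_N$, each realizable as $|k|$ for suitable integer vectors, e.g.\ by taking $\bar\lambda_j^2$ in the image of the sum-of-three-squares form); since points on spheres of distinct radii are automatically distinct, the resulting $\Lambda_1,\dots,\Lambda_N$ are pairwise disjoint. One then sets $\bar\lambda:=\max_j\bar\lambda_j$ (or, more carefully, rescales all of them onto a common large sphere — this is a standard trick, multiplying each $k$ by an integer factor so that all $|k|$ coincide, which preserves the rank-one direction $k/|k|$ and hence identity (2)). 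I would take $r_0$ and the neighbourhood $B_{r_0}(\Id)$ to be the smallest one produced across the $N$ applications.

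Next I would address part (3). For each fixed $j$ and each $\Lambda_j$, I need vectors $A^j_k\in\mathbb{R}^3$ with $|A^j_k|=\tfrac1{\sqrt2}$, $k\cdot A^j_k=0$, and smooth scalar functions $g^{(j)}_k$ such that every $f\in C^\infty(\mathbb{R}^3)$ (valued in $\mathbb{R}^3$) decomposes as $f=\sum_{k\in\Lambda_j}g^{(j)}_k(f)A^j_k$. The key observation is that $\Lambda_j$ contains at least one pair $\{k,-k\}$; choosing a fixed $k_0\in\Lambda_j$, the orthogonal complement $k_0^\perp$ is a $2$-plane, and I can pick two of the available directions to span it, but in fact it is cleaner to exploit all of $\mathbb{R}^3$: pick three elements $k_1,k_2,k_3\in\Lambda_j$ whose associated admissible normal vectors $A^j_{k_1},A^j_{k_2},A^j_{k_3}$ form a basis of $\mathbb{R}^3$ (this is possible because the directions $k_i/|k_i|$ are not all coplanar for a generic $\Lambda_j$ on a sphere, and each $k$ admits a whole circle of unit-normal choices, so one has enough freedom to force linear independence). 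Having fixed such a basis $\{A^j_{k_1},A^j_{k_2},A^j_{k_3}\}$, define $g^{(j)}_{k_i}(f)$ to be the $i$-th coordinate of $f$ in this basis — an explicit linear, hence $C^\infty$, function of $f\in\mathbb{R}^3$ (and therefore of $f\in C^\infty(\mathbb{R}^3)$ pointwise) — and set $g^{(j)}_k\equiv 0$ for the remaining $k\in\Lambda_j$; keep the $A^j_k$ for those remaining $k$ as arbitrary admissible unit-normal vectors. Then $\sum_{k\in\Lambda_j}g^{(j)}_k(f)A^j_k=\sum_{i=1}^3 g^{(j)}_{k_i}(f)A^j_{k_i}=f$, which is (3). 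The factor $\tfrac1{\sqrt2}$ is just a normalization matching the amplitude conventions used later for the Beltrami-type plane waves; one absorbs it into the definition of $g^{(j)}_k$.

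Finally, part (1) for the $\gamma$'s is inherited verbatim from \cite{CDL2} (symmetry under $k\mapsto -k$), and one only needs to check it is compatible with the rescaling onto a common sphere, which it is since rescaling commutes with negation. I would then assemble everything: choose the $N$ spheres, apply the classical lemma to each, rescale onto a common radius $\bar\lambda$, shrink $r_0$ to the common value, and adjoin the vectors $A^j_k$ and functions $g^{(j)}_k$ constructed above. I expect the main obstacle to be the simultaneous constraint management — specifically, verifying that one can choose $r_0$, $\bar\lambda$ and the sets $\Lambda_j$ so that \emph{all} of (1)--(3) hold at once for every $j$, and in particular ensuring the existence of three elements of each $\Lambda_j$ with linearly independent admissible normals; this is a finite, explicit check, but it is where one must look inside the construction in \cite{CDL2} rather than merely cite it. Everything else is linear algebra and the elementary number theory of which integers are sums of three squares.
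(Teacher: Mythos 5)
Your proof of the genuinely new part (3) is essentially identical to the paper's: pick three elements of $\Lambda_j$ admitting linearly independent admissible normals $A^j_{k_1},A^j_{k_2},A^j_{k_3}$, define $g^{(j)}_{k_i}(f)$ as the coordinates of $f$ in that basis (i.e.\ via the inverse of the matrix $(A^j_{k_1},A^j_{k_2},A^j_{k_3})$), and set $g^{(j)}_k\equiv 0$ for the remaining $k$, while parts (1)--(2) are cited from \cite{CDL2} in both arguments. The only divergence is that your multi-sphere/rescaling construction of the $N$ pairwise disjoint families is unnecessary (and the integer rescaling onto a common radius is delicate), since the lemma of \cite{CDL2} is already stated for every $N$ with pairwise disjoint $\Lambda_j$ on a single sphere $|k|=\bar\lambda$; the paper simply uses that.
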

\begin{proof}
The proof is essentially an extension of it given in \cite{CDL2}. The choosing of $r_0, ~\lambda_0,~\Lambda_j,~\gamma_k^j$ and the results (1), (2) are due to Camillo De Lellis and L\'{a}szl\'{o} Sz\'{e}kelyhidi and can be found in \cite{CDL2}. We only give the proof on existence of $A^j_{k}$, $g^j_{k}$ and the related property.
 For the given $j\in \{1,\cdot\cdot\cdot,N\}$, we can choose $k^j_1,~ k^j_2 ,~k^j_3$ in $\Lambda_{j}$  such that they are linearly independent, and then we can choose 3-dimensional vectors $A^j_{k_1},A^j_{k_2},A^j_{k_3}$ such that  $A^j_{k_i}\cdot k^j_i=0$ and $|A^j_{k_i}|=\frac{1}{\sqrt{2}}$, $i=1,2,3$, which are linear independent.

 For $k\in \Lambda_j\backslash \{k^j_1, k^j_2 ,k^j_3\}$, we take $A^j_k$ be a 3-dimensional vector such that $A^j_k\cdot k=0, |A^j_k|=\frac{1}{\sqrt{2}}$.
 Then, we can choose $g_k^j(f)=0$ for $k\in\Lambda_j\backslash \{k^j_1, k^j_2 ,k^j_3\} $ and
  \[
  \left(\begin{array}{cc}
  g_{k_1}^j(f)\\[12pt]
  g_{k_2}^j(f)\\[12pt]
  g_{k_2}^j(f)
  \end{array}\right)
  :=(A^j_{k_1},A^j_{k_2},A^j_{k_3})^{-1}f.\]
where $f\in C^{\infty}(T^3)$ and $(A^j_{k_1},A^j_{k_2},A^j_{k_3})^{-1}$ denote the inverse matrix of $(A^j_{k_1},A^j_{k_2},A^j_{k_3}).$
Thus, the linear functionals $g_{k}^{(j)}$ satisfies
 \begin{align*}
    \forall f\in C^{\infty}(R^{3}),f=\sum_{k\in \Lambda_{j}}g_{k}^{(j)}(f)A^j_{k}.
  \end{align*}
Then we finished the proof of the geometric lemma.

\end{proof}

\subsection{The operator $\mathcal{R}$ and $\mathcal{G}$}

\indent

We introduce some operators
in order to deal with the Reynolds stresses. The operator $\mathcal{R}$ was introduced in  \cite{CDL2} and the operator $\mathcal{G}$ is given by us.
\subsubsection{The operator $\mathcal{R}$ }
The following lemma is taken from \cite{CDL2}, we copy it here for the completeness of the paper (the proof refers to \cite{CDL2}).
\begin{lemma}[$\mathcal{R}=\textrm{div}^{-1}$]\label{l:reyn}
There exist a linear operator $\mathcal{R}$ from $C^\infty (T^3, R^3)$ to $C^\infty (T^3, R^{3\times3})$ such that the following property hold: for any $v\in C^\infty (T^3, R^3)$ we have
\begin{itemize}\label{l:R}
\item[(a)] $\mathcal{R}v(x)$ is a symmetric trace-free matrix for each $x\in T^3$;
\item[(b)] ${\rm div}\mathcal{R} v = v-\fint_{T^3}v$.
\end{itemize}
\end{lemma}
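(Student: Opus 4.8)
This operator is the one constructed in \cite{CDL2}; the plan is to write it down by an explicit formula in terms of the inverse Laplacian on the torus and then check the two properties directly. Since $\mathcal{R}$ is required to be linear and property (b) is insensitive to the average of $v$, it suffices to define $\mathcal{R}$ on fields with $\fint_{T^3}v=0$ and then set $\mathcal{R}v:=\mathcal{R}\big(v-\fint_{T^3}v\big)$ for general $v$. On $T^3=(R/2\pi Z)^3$, Fourier series provide, for every smooth scalar $g$ with zero average, a unique smooth mean-zero solution $\Delta^{-1}g$ of $\Delta u=g$ (its Fourier coefficients are $-|k|^{-2}\hat g(k)$ for $k\neq 0$, which decay rapidly); hence $\Delta^{-1}$ is linear, preserves smoothness and the zero-average property, and commutes with each $\partial_\ell$.

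Then, given $v\in C^\infty(T^3,R^3)$ with zero average, I would set $u:=\Delta^{-1}v$ componentwise and $\varphi:=\Delta^{-1}({\rm div}\,u)$ — legitimate, since ${\rm div}\,u$ is a divergence and so has zero average — and define
\[
(\mathcal{R}v)_{ij}:=\partial_i u^j+\partial_j u^i-\frac12\,\delta_{ij}\,{\rm div}\,u-\frac12\,\partial_i\partial_j\varphi .
\]
This is a smooth, symmetric matrix field depending linearly on $v$, so only (a) and (b) remain. For the trace, using $\Delta\varphi={\rm div}\,u$,
\[
\sum_{i}(\mathcal{R}v)_{ii}=2\,{\rm div}\,u-\frac32\,{\rm div}\,u-\frac12\,\Delta\varphi=0 ,
\]
which together with symmetry gives (a). For the divergence, using $\Delta u=v$ and again $\Delta\varphi={\rm div}\,u$,
\begin{align*}
\sum_{j}\partial_j(\mathcal{R}v)_{ij}
&=\partial_i({\rm div}\,u)+\Delta u^i-\frac12\,\partial_i({\rm div}\,u)-\frac12\,\partial_i\Delta\varphi\\
&=v^i+\partial_i({\rm div}\,u)-\frac12\,\partial_i({\rm div}\,u)-\frac12\,\partial_i({\rm div}\,u)=v^i ,
\end{align*}
so ${\rm div}\,\mathcal{R}v=v$ on mean-zero fields and ${\rm div}\,\mathcal{R}v=v-\fint_{T^3}v$ in general, which is (b).

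I do not expect a genuine obstacle here. The only ``design'' input is the choice of the two coefficients $-\frac12$ so that trace-freeness and the divergence identity hold simultaneously, and the only point requiring a little care is that each occurrence of $\Delta^{-1}$ acts on a function of zero average — which is automatic, since ${\rm div}\,u$ is a pure divergence on the torus. One should also note that on all of $C^\infty(T^3,R^3)$ the best possible identity is ${\rm div}\,\mathcal{R}v=v-\fint_{T^3}v$, because the left-hand side always has zero average; this is exactly why (b) is stated with the mean subtracted.
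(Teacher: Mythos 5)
Your proof is correct. The paper itself does not prove this lemma---it simply copies the statement and refers to \cite{CDL2} for the proof---and your explicit operator is exactly the one defined there: writing the Leray projection as $\mathcal{P}u=u-\nabla\Delta^{-1}\operatorname{div}u=u-\nabla\varphi$, the formula $\tfrac14\big(\nabla\mathcal{P}u+(\nabla\mathcal{P}u)^T\big)+\tfrac34\big(\nabla u+(\nabla u)^T\big)-\tfrac12(\operatorname{div}u)\,\mathrm{Id}$ of \cite{CDL2} reduces precisely to your $(\mathcal{R}v)_{ij}$, and your verifications of symmetry, trace-freeness and $\operatorname{div}\mathcal{R}v=v-\fint_{T^3}v$ all check out.
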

Here and subsequent, we use the natation $\fint_{T^3}v(x)dx=\frac{1}{|T^3|}\int_{T^3}v(x)dx$.

\subsubsection{The operator $\mathcal{G}$ }

\indent

\begin{Definition}
 Let $b\in C^{\infty}(T^{3};R)$ be a smooth function. We define a vector-valued periodic function $ \mathcal{G}b$ by
\begin{align*}
    \mathcal{G}b=\nabla a
\end{align*}
where $a\in C^{\infty}(T^{3};R)$ is the solution of
\begin{align*}
    \Delta a=b-\fint_{T^{3}}b \textrm{ in } T^3
\end{align*}
 with $\int_{T^{3}}a(x)dx=0$.
 \end{Definition}
\begin{Lemma}\label{l:G inverse}
$(\mathcal{G}={\rm div}^{-1})$For any $b\in C^{\infty}(T^{3};R)$, we have
\begin{align*}
   {\rm div}(\mathcal{G}b)=b-\fint_{T^{3}}b.
\end{align*}

\begin{proof}
The proof is elementary. In fact, we calculate directly
\begin{align*}
   {\rm div}(\mathcal{G}b)=\triangle a=b-\fint_{T^{3}}b.
\end{align*}
\end{proof}
\end{Lemma}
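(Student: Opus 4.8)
The plan is to simply unwind the definition of $\mathcal{G}$. By construction $\mathcal{G}b=\nabla a$, where $a\in C^{\infty}(T^{3};R)$ is the mean-zero solution of $\Delta a=b-\fint_{T^{3}}b$. Applying the divergence and using the elementary identity ${\rm div}\,\nabla a=\Delta a$ one obtains ${\rm div}(\mathcal{G}b)=\Delta a=b-\fint_{T^{3}}b$, which is exactly the asserted identity; since $a$ is smooth this holds in the classical pointwise sense and there is nothing further to check. So the entire content of the lemma is this one line of computation.

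The only point that deserves a remark is that $\mathcal{G}$ is well-defined, i.e.\ that the Poisson problem in the Definition admits a unique smooth mean-zero solution for every smooth $b$. I would note that $b-\fint_{T^{3}}b$ has zero average over $T^{3}$; writing its Fourier expansion $b-\fint_{T^{3}}b=\sum_{k\in Z^{3}\setminus\{0\}}\widehat{b}_{k}e^{ik\cdot x}$ (the $k=0$ coefficient being precisely the removed mean), the function $a:=-\sum_{k\neq 0}|k|^{-2}\widehat{b}_{k}e^{ik\cdot x}$ satisfies $\Delta a=b-\fint_{T^{3}}b$ and $\int_{T^{3}}a\,dx=0$, and the series converges in $C^{\infty}(T^{3})$ because $b\in C^{\infty}$ forces $\widehat{b}_{k}$ to decay faster than any power of $|k|$. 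Uniqueness among mean-zero functions follows since the difference of two solutions is a periodic harmonic function, hence constant, hence zero.

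There is essentially no obstacle here: the statement is an immediate consequence of ${\rm div}\,\nabla=\Delta$ together with standard elliptic theory on the torus, and since all integrations are over $T^{3}$ no boundary terms arise. The lemma is recorded mainly so that $\mathcal{G}$ can play, for scalar fields and the temperature equation in \eqref{d:boussinesq reynold}, the same role that the operator $\mathcal{R}$ of Lemma \ref{l:reyn} plays for divergence-form vector errors.
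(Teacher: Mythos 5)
Your proposal is correct and follows exactly the paper's one-line argument: unwind $\mathcal{G}b=\nabla a$ and use ${\rm div}\,\nabla a=\Delta a=b-\fint_{T^3}b$. The extra remarks on well-posedness of the Poisson problem on $T^3$ are sound but not part of the paper's proof, which takes this for granted.
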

Some estimates related to the operators $\mathcal{R}$  and $\mathcal{G}$ can be founded in the Appendix A.

\section{the constructions of $ v_{01}, p_{01}, \theta_{01}, R_{01}, f_{01} $}
The constructions of $\tilde{v},~\tilde{p},~\tilde{\theta},~\mathring{\tilde{R}},~\tilde{f}$ from  $v,~p,~\theta,~\mathring{R},~f$ consist of several steps. The main idea is to decompose the stress error into blocks with the help of geometric lemma and to remove one block in one step. In this section, we perform the first step.

For convenience, we set $v_0:=v,~p_0:=p,~\theta_0:=\theta,~\mathring{R}_0:=\mathring{R},~f_0:=f$.

\subsection{The main perturbation $w_{1o}$}

\indent

We first introduce some notations. From \cite{CDL2}, we have the following partition of unity: for two constants $c_{1}$ and $c_{2}$ such that $\frac{\sqrt{3}}{2}<c_{1}<c_{2}<1$ , we have a family of functions $\alpha_l\in C_c^{\infty}(R^3):l\in Z^3$ such that
\begin{align}\label{p:unity}
\sum\limits_{l\in Z^3}\alpha_l^2=1,\qquad \hbox{supp}\alpha_l\subseteq B_{c_2}(l).
\end{align}
We apply the geometric Lemma \ref{p:split} with $N=1$ and obtain $\bar{\lambda}>1,r_{0}>0$ , subset $\Lambda=\{\pm k_1,...,\pm k_L\}$ and
 vectors $\{A_{\pm k_j}, j=1,\cdot\cdot\cdot,L\}$ together with corresponding functions
\begin{align*}
    \gamma_{k_i}\in C^{(\infty)}(B_{r_{0}}(Id)),\qquad g_{k_i}\in C^{(\infty)}(R^{3}),\qquad  i=1,\cdots,L.
\end{align*}
where $L$ is a fixed integer.
Thus the result can be restated as following:\\
For any $R\in B_{r_0}(Id)$, we have the identity
\begin{align}\label{p:split 1}
R=\sum\limits_{i=1}^{L}\gamma^2_{k_i}(R)\Big(Id-\frac{k_i}{|k_i|}\otimes\frac{k_i}{|k_i|}\Big).
\end{align}
and for any $f\in C^{(\infty)}(R^{3})$, we have
\begin{align}
f=\sum\limits_{i=1}^{L}g_{k_i}(f)A_{k_i}.\nonumber
\end{align}
In fact, from the construction of $g_{k_i}$, after possibly relabel the index, we have
\begin{align}\label{p:split 2}
f=\sum\limits_{i=1}^{3}g_{k_i}(f)A_{k_i}.
\end{align}

Next, as in \cite{CDL2}, to define the amplitude of the perturbation, we set
\begin{align}\label{d:amp 2}
    \bar{\rho}(t):=\frac{1}{(2\pi)^{3}}\Big{(}e(t)\Big(1-\frac{\delta}{2}\Big)-\int_{T^{3}}|v_0|^{2}(x,t)dx\Big{)},
    \end{align}
    and
    \begin{align}\label{d:matr}
        R_0(x,t):=\bar{\rho}(t)Id-\mathring{R}_0(x,t).
    \end{align}
then for any $l\in Z^3$, we denote $b_{1l}$ by
   \begin{align}\label{d:l amp}
    b_{1l}(x,t):=&\sqrt{\bar{\rho}(t)}\alpha_l(\mu_1 v_0)\gamma_{k_1}\Big{(}\frac{R_0(x,t)}{\bar{\rho}(t)}\Big{)},
    \end{align}
    and define
    \begin{align}
    B_{k_1}:=A_{k_1}+i\frac{k_1}{|k_1|}\times A_{k_{1}}.
   \end{align}
   Then we define $l$-perturbation
   \begin{align}\label{d:w 1ol}
    w_{1ol}:=b_{1l}(x,t)\Big{(}B_{k_1}e^{i\lambda_1 2^{|l|} k_1\cdot (x-\frac{l}{\mu_1}t)}+B_{-k_1}e^{-i\lambda_1 2^{|l|} k_1\cdot (x-\frac{l}{\mu_1}t)}\Big{)}.
   \end{align}
   where we set $A_{-k_1}=A_{k_1}$.

Finally, we define 1-th perturbation
   \begin{align}\label{d:1o}
    w_{1o}:=\sum_{l\in Z^3}w_{1ol}.
   \end{align}

Obviously, $w_{1ol},w_{1o}$ are all real 3-dimensional vector functions.
   According to (\ref{p:unity}), we have $\hbox{supp} \alpha_l\cap \hbox{supp}\alpha_{l'}=\emptyset$  if $|l-l'|\geq2$, therefore the above summation is meaningful.

 \subsection{The constants $\eta$ and $M$}

 \indent

   From \cite{CDL2}, we known that there exist constant $\eta=\eta(e)$ (independent on $\delta$) such that
    \begin{align}\label{b:bound R 0}
    \Big{\|}\frac{R_0}{\bar{\rho}(t)}-Id\Big\|_0\leq \frac{\|\mathring{R}_0\|_0}{c\delta m}\leq \frac{\eta}{cm}\leq\frac{r_0}{2},
   \end{align}
   so $b_{1l}$ is well-defined,
   where $\|f\|_0=\sup_{x,t}|f(x,t)|.$
   Also from \cite{CDL2}, there exists a constant $ M=M(e)>1$ (in particular independent of $\delta$) such that
   \begin{align}
    \|b_{1l}\|_{0}\leq\frac{M\sqrt{\delta}}{10L},\nonumber
   \end{align}
   where $L$ is fixed constant from geometric lemma.\\
   Moreover, from the property (\ref{p:unity}) of $\alpha_l$, we know that
     \begin{align}\label{e:bound 1o}
    \|w_{1o}\|_{0}\leq\frac{M\sqrt{\delta}}{2L}.
   \end{align}

 \subsection{The correction $w_{1oc}$ and the perturbations $w_{1}$ and $\chi_1$}

   \indent

   We denote the $l$-correction
   \begin{align}\label{d:w 1ocl}
   w_{1ocl}:=&\frac{1}{\lambda_1\lambda_{0}}\Big{(}\frac{\nabla b_{1l}(x,t)\times B_{k_1}}{2^{|l|}}e^{i\lambda_1 2^{|l|} k_1\cdot (x-\frac{l}{\mu_1}t)}+\frac{\nabla b_{1l}(x,t)\times B_{-{k_1}}}{2^{|l|}}e^{-i\lambda_1 2^{|l|} k_1\cdot (x-\frac{l}{\mu_1}t)}\Big{)},
   \end{align}
   then denote 1-th correction
   \begin{align}\label{d:w 1oc}
    w_{1oc}:=\sum_{l\in Z^3}w_{1ocl}.
    \end{align}

   Finally, we denote 1-th perturbation
   \begin{align}\label{d:w 1}
    w_1:=w_{1o}+w_{1oc}.
   \end{align}
  Thus, if we denote $w_{1l}$ by
   \begin{align}\label{d:w 1l}
   w_{1l}:=&w_{1ol}+w_{1ocl}\nonumber\\
   =&\frac{1}{\lambda_1\lambda_{0}}\hbox{curl}\Big(\frac{b_{1l}(x,t)B_{k_1}}{2^{|l|}}e^{i\lambda_1 2^{|l|} k_1\cdot (x-\frac{l}{\mu_1}t)}+\frac{b_{1l}(x,t)B_{-{k_1}}}{2^{|l|}}e^{-i\lambda_1 2^{|l|}k_1\cdot (x-\frac{l}{\mu_1}t)}\Big{)},
   \end{align}
   then
   \begin{align}
   w_1=\sum\limits_{l\in Z^3}w_{1l},\qquad {\rm div}w_{1l}=0,\nonumber
   \end{align}
   and
   \begin{align}
   \hbox{div}w_1=0.\nonumber
   \end{align}
Moreover, if we set
\begin{align}
B_{1lk_1}:=&b_{1l}(x,t)B_{k_1}+\frac{1}{\lambda_1\lambda_{0}}\frac{\nabla b_{1l}(x,t)\times B_{k_1}}{2^{|l|}},\nonumber\\
B_{-1lk_1}:=&b_{1l}(x,t)B_{-k_1}+\frac{1}{\lambda_1\lambda_{0}}\frac{\nabla b_{1l}(x,t)\times B_{-k_1}}{2^{|l|}},\nonumber
\end{align}
then
\begin{align}
w_{1l}=B_{1lk_1}e^{i\lambda_1 2^{|l|} k_1\cdot (x-\frac{l}{\mu_1}t)}+B_{-1lk_1}e^{-i\lambda_1 2^{|l|} k_1\cdot (x-\frac{l}{\mu_1}t)}.\nonumber
\end{align}
Thus we complete the construction of perturbation $w_1$.

To construct $\chi_1$, we first
   denote $\beta_{1l}$ by
   \begin{align}\label{d:b1l}
    \beta_{1l}(x,t):=\frac{\alpha_l(\mu_1 v_0)}{2\sqrt{\bar{\rho}(t)}}\frac{g_{k_1}(-f_0(x,t))}{\gamma_{k_1}\big(\frac{R_0(x,t)}{\bar{\rho}(t)}\big)},
    \end{align}
    then denote the $l$-perturbation
    \begin{align}
    \chi_{1l}(x,t):=\beta_{1l}(x,t)\Big(e^{i\lambda_1 2^{|l|} k_1\cdot (x-\frac{l}{\mu_1}t)}+e^{-i\lambda_1 2^{|l|} k_1\cdot (x-\frac{l}{\mu_1}t)}\Big).
    \end{align}
 Finally, we define the perturbation
   \begin{align}
    \chi_1(x,t):=\sum_{l\in Z^3}\chi_{1l}.
   \end{align}
   Thus, $\chi_{1l}$ and $\chi_1$ are both real scalar functions, and as the perturbation of $w_1$, the summation in the definition of $\chi_1$ is meaningful.\\

   Moreover, since $\Big\|\frac{R_0(x,t)}{\bar{\rho}(t)}\Big\|_0\leq\frac{r_0}{2}$,  there exist two positive constants $c_{11}$ and $c_{21}$ such that $$c_{11}\leq\gamma_{k_1}\Big(\frac{R(x,t)}{\bar{\rho}(t)}\Big)\leq c_{21}.$$

  Thus, from (\ref{d:b1l}) on $\beta_{1l}$  and the assumption (\ref{e:reynold initial 2}) on $f_0$, after possibly taking a bigger number $M$ (still is a absolute constant only depend on $e$), we know that
  \begin{align}\label{b:bound 3}
  \|\beta_{1l}\|_0\leq \frac{M\sqrt{\delta}}{10L},
  \end{align}
  therefore, we have
  \begin{align}\label{e:bound x}
  \|\chi_1\|_0\leq \frac{M\sqrt{\delta}}{2L}.
  \end{align}

\subsection{The constructions of  $v_{01}$,~$p_{01}$,~$\theta_{01}$,~$f_{01}$,~$\mathring{R}_{01}$ }.

\indent

First, we denote
 $M_1$ by
   \begin{align}
   M_1=&\sum\limits_{l\in Z^3}b^2_{1l}(x,t)\Big{(}B_{k_1}\otimes B_{k_1}e^{2i\lambda_1 2^{|l|} k_1\cdot (x-\frac{l}{\mu_1}t)}+B_{-k_1}\otimes B_{-k_1}
   e^{-2i\lambda_1 2^{|l|} k_1\cdot (x-\frac{l}{\mu_1}t)}\Big{)}\nonumber\\
   &+\sum\limits_{l,l'\in Z^3 ,l\neq l'}w_{1ol}w_{1ol'}(x,t),
   \end{align}


 and
$N_1,K_1$ by
 \begin{align}
   N_1=&\sum_{l\in Z^3}\Big[w_{1l}\otimes \Big(v_0-\frac{l}{\mu_{1}}\Big)
   +\Big(v_0-\frac{l}{\mu_{1}}\Big{)}\otimes w_{1l}\Big]\nonumber\\
   K_1=&\sum\limits_{l\in Z^3}\beta_{1l}(x,t)b_{1l}(x,t)\Big(B_{k_1}e^{2i\lambda_1 2^{|l|} k_1\cdot (x-\frac{l}{\mu_1}t)}+B_{-k_1}e^{-2i\lambda_1 2^{|l|} k_1\cdot (x-\frac{l}{\mu_1}t)}\Big{)}+\sum\limits_{l,l'\in Z^3 ,l\neq l'}w_{1ol}\chi_{1l'}.
    \end{align}
 Notice that $N_{1}$ is a symmetric matrix.  Then we define
  \[ \begin{aligned}
    &v_{01}:=v_0+w_1,\\
    &p_{01}:=p_0-\frac{2w_{1o}\cdot w_{1oc}+|w_{1oc}|^2}{3}-tr(N_1),\\
    &\theta_{01}:=\theta_0+\chi_1-\fint_{T^{3}}{\chi_1}dx,\\
     &R_{01}:=-R_0(x,t)+2\sum\limits_{l\in Z^3}b^2_{1l}(x,t)Re(B_{k_1}\otimes B_{-{k_1}})+\delta \mathring{R}_{01},\\
    &f_{01}:=f_0+2\sum\limits_{l\in Z^3}\beta_{1l}(x,t)b_{1l}(x,t)A_{k_1}+\delta f_{01},
   \end{aligned}\]
   where
   \begin{align}\label{d:R0small}
   \delta \mathring{R}_{01}=&\mathcal{R}(\hbox{div}M_1)+N_1-tr(N_1)Id+\mathcal{R}\Big\{\partial_tw_{1}
  +\hbox{div}\Big[\sum_{l\in Z^3}\Big(w_{1l}\otimes\frac{l}{\mu_{1}}+\frac{l}{\mu_{1}}\otimes w_{1l}\Big)\Big]\Big\}\nonumber\\
   &+(w_{1o}\otimes w_{1oc}+w_{1oc}\otimes w_{1o}+w_{1oc}\otimes w_{1oc})\nonumber\\
   &-\frac{2w_{1o}\cdot w_{1oc}+|w_{1oc}|^2}{3}Id
   -\mathcal{R}\Big(\Big(\chi_1-\fint_{T^{3}}{\chi_1}dx\Big)e_3\Big),
   \end{align}
and
   \begin{align}\label{d:f0small}
   \delta f_{01}=&\mathcal{G}(\hbox{div}K_1)
   +\mathcal{G}(w_1\cdot\nabla\theta_{0})
   +\mathcal{G}\Big(\partial_t\chi_{1}+\sum_{l\in Z^3}\Big(\frac{l}{\mu_{1}}\cdot\nabla\Big)\chi_{1l}\Big)+\sum_{l\in Z^3}\Big(v_0-\frac{l}{\mu_1}\Big)\chi_{1l}+w_{1oc}\chi_1.
  \end{align}

From the property (\ref{l:R}) of $\mathcal{R}$, we know that $\delta \mathring{R}_{01}$ is a symmetric and trace-free matrix. Obviously,
   \begin{align*}
    \hbox{div}v_{1}=\hbox{div}v_0+\hbox{div}w_{1}=0.
   \end{align*}
   Moreover, by the definition of $w_{1o}, w_{1oc},\delta \mathring{R}_{01}$ as well as $v_{01},p_{01}$ and notice that $v_{0}, p_{0},
    \theta_0, \mathring{R}_{0}, f_{0}$  are solutions of the system (\ref{d:boussinesq reynold}),
together with Lemma \ref{l:R}, we know that
\[\begin{aligned}
    \hbox{div}R_{01}=&\hbox{div}\mathring{R_0}(x,t)+\partial_tw_1-\nabla(trN_1)
    -\nabla\Big(\frac{2w_{1o}\cdot w_{1oc}+|w_{1oc}|^2}{3}\Big)-\Big(\chi_1-\fint_{T^{3}}{\chi_1}dx\Big)e_3\nonumber\\
    &+\hbox{div}(w_{1o}\otimes w_{1o}+w_{1}\otimes v_{0}+v_0\otimes w_{1}
    +w_{1o}\otimes w_{1oc}+w_{1oc}\otimes w_{1o}+w_{1oc}\otimes w_{1oc})\nonumber\\
    =&\partial_tv_{0}+\hbox{div}(v_{0}\otimes v_{0})+\nabla p_{0}-\theta_0e_3+\partial_tw_1
    -\nabla(trN_1)-\nabla\Big(\frac{2w_{1o}\cdot w_{1oc}+|w_{1oc}|^2}{3}\Big)\nonumber\\
    &-\Big(\chi_1-\fint_{T^{3}}{\chi_1}dx\Big)e_3
    +\hbox{div}(w_{1o}\otimes w_{1o}+w_{1}\otimes v_{0}+v_0\otimes w_{1}
    +w_{1o}\otimes w_{1oc}\nonumber\\
    &+w_{1oc}\otimes w_{1o}+w_{1oc}\otimes w_{1oc})\nonumber\\
    =&\partial_tv_{01}+\hbox{div}(v_{01}\otimes v_{01})+\nabla p_{01}-\theta_{01}e_3.
    \end{aligned}\]
    Where we used $$\fint_{T^3}w_{1}(x,t)dx=0,$$
    and $$\hbox{div}(M_1)+\hbox{div}\Big(2\sum\limits_{l\in Z^3}b^2_{1l}(x,t)Re(B_{k_1}\otimes B_{-{k_1}})\Big)=\hbox{div}(w_{1o}\otimes w_{1o}).$$
  Furthermore, from the definition of $w_{1o}, \chi_1$, we know that
  \[ \begin{aligned}
    f_{01}=&f_0+2\sum\limits_{l\in Z^3}\beta_{1l}(x,t)b_{1l}(x,t)A_{k_1}+\mathcal{G}(\hbox{div}K_1)
   +\mathcal{G}(w_1\cdot\nabla\theta_{0})\nonumber\\
   &+\mathcal{G}\Big(\partial_t\chi_{1}+\sum_{l\in Z^3}\Big(\frac{l}{\mu_{1}}\cdot\nabla\Big)\chi_{1l}\Big)
   +\sum_{l\in Z^3}\Big(v_0-\frac{l}{\mu_1}\Big)\chi_{1l}+w_{1oc}\chi_1.\nonumber
   \end{aligned}\]
 Therefore, by the fact that $v_{0}, p_{0},
    \theta_0, \mathring{R}_{0}, f_{0}$ are solutions of  the system (\ref{d:boussinesq reynold})and by Lemma \ref{l:G inverse}
 \begin{align}
 \hbox{div}f_{01}=&\hbox{div}f_0+\partial_{t}\Big(\chi_1-\fint_{T^3}\chi_1dx\Big)
 +\hbox{div}(w_{1o}\chi_1+w_{1oc} \chi_1+v_0 \chi_1+w_1 \theta_0)\nonumber\\
 =&\hbox{div}(v_0\theta_0+w_{1o}\chi_1+w_{1oc} \chi_1+v_0 \chi_1+w_1 \theta_0)
 +\partial_t\Big(\theta_0+\chi_1-\fint_{T^3}\chi_1dx\Big)-h\nonumber\\
 =&\partial_t\theta_{01}+\hbox{div}(v_{01}\theta_{01})-h.\nonumber
 \end{align}
 Thus the functions $v_{01},p_{01},\theta_{01},R_{01},f_{01}$ solve the system (\ref{d:boussinesq reynold}).

\section{The representations}

\indent

In this section, we will calculate the form of
$$-R_0(x,t)+2\sum\limits_{l\in Z^3}b^2_{1l}(x,t)Re(B_{k_1}\otimes B_{-{k_1}}),$$
and
$$f_0+2\sum\limits_{l\in Z^3}\beta_{1l}(x,t)b_{1l}(x,t)A_{k_1}.$$
We will use the following basic fact.
\begin{proposition}
We have the following identity
\begin{align}\label{p:identity}
2Re(B_{k_1}\otimes B_{-{k_1}})=Id-\frac{k_1}{|k_1|}\otimes\frac{k_1}{|k_1|}.
\end{align}
\end{proposition}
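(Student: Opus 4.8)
The statement to prove is the identity
\[
2\,\mathrm{Re}(B_{k_1}\otimes B_{-k_1}) = \mathrm{Id} - \frac{k_1}{|k_1|}\otimes\frac{k_1}{|k_1|},
\]
where $B_{k_1} = A_{k_1} + i\frac{k_1}{|k_1|}\times A_{k_1}$ and $B_{-k_1} = A_{-k_1} + i\frac{-k_1}{|-k_1|}\times A_{-k_1}$ with $A_{-k_1} = A_{k_1}$, subject to the geometric-lemma constraints $|A_{k_1}| = \frac{1}{\sqrt2}$ and $k_1\cdot A_{k_1} = 0$.

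\textbf{Plan of proof.} The plan is to expand the tensor product $B_{k_1}\otimes B_{-k_1}$ directly in terms of the real vectors $A := A_{k_1}$ and the unit vector $\hat k := \frac{k_1}{|k_1|}$, using $B_{-k_1} = A - i\,\hat k\times A$ (since $A_{-k_1}=A_{k_1}$ and $\frac{-k_1}{|-k_1|} = -\hat k$). First I would write
\[
B_{k_1}\otimes B_{-k_1} = (A + i\,\hat k\times A)\otimes(A - i\,\hat k\times A)
= A\otimes A + (\hat k\times A)\otimes(\hat k\times A) + i\big[(\hat k\times A)\otimes A - A\otimes(\hat k\times A)\big].
\]
Taking the real part kills the last bracket, so $2\,\mathrm{Re}(B_{k_1}\otimes B_{-k_1}) = 2A\otimes A + 2(\hat k\times A)\otimes(\hat k\times A)$. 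It then remains to show this equals $\mathrm{Id} - \hat k\otimes\hat k$.

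\textbf{Key step.} The decisive observation is that, because $A\perp \hat k$ and $|\hat k|=1$, the three vectors $\sqrt2\,A$, $\sqrt2\,(\hat k\times A)$, $\hat k$ form an orthonormal basis of $\mathbb{R}^3$: indeed $\hat k\times A$ is orthogonal to both $\hat k$ and $A$, and $|\hat k\times A| = |\hat k||A| = \frac{1}{\sqrt2}$ since $\hat k\perp A$, while $|A| = \frac{1}{\sqrt2}$. Hence the completeness (resolution of identity) relation reads
\[
\mathrm{Id} = 2\,A\otimes A + 2\,(\hat k\times A)\otimes(\hat k\times A) + \hat k\otimes\hat k,
\]
and rearranging gives exactly $2A\otimes A + 2(\hat k\times A)\otimes(\hat k\times A) = \mathrm{Id} - \hat k\otimes\hat k$, which is the claimed identity. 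I do not anticipate a genuine obstacle here; the only point requiring a little care is the verification of the norm $|\hat k\times A| = \frac{1}{\sqrt2}$ and the mutual orthogonality, which rely essentially on the constraints $k_1\cdot A_{k_1}=0$ and $|A_{k_1}|=\frac{1}{\sqrt2}$ built into the Geometric Lemma — so this identity is precisely where those normalizations pay off. One may alternatively verify the matrix identity by testing against the orthonormal basis $\{\sqrt2 A,\sqrt2(\hat k\times A),\hat k\}$ directly, which amounts to the same computation.
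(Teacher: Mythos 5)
Your proof is correct. The paper itself gives no argument for this proposition — it simply defers to \cite{CDL2} — and your computation is precisely the standard verification that the reference relies on: expanding $B_{k_1}\otimes B_{-k_1}=(A+i\hat k\times A)\otimes(A-i\hat k\times A)$, discarding the imaginary (antisymmetric) part, and invoking the resolution of the identity for the orthonormal frame $\{\sqrt2\,A,\ \sqrt2\,(\hat k\times A),\ \hat k\}$, which is exactly where the normalizations $|A_{k_1}|=\tfrac{1}{\sqrt2}$ and $k_1\cdot A_{k_1}=0$ from the Geometric Lemma are used. The only implicit point worth flagging is that $B_{-k_1}$ must be read as $\overline{B_{k_1}}$ (i.e.\ the defining formula with $k_1$ replaced by $-k_1$ and $A_{-k_1}=A_{k_1}$), which you state and which is forced by the requirement that the perturbation $w_{1ol}$ be real.
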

\begin{proof}
We refer the proof to  \cite{CDL2}.
\end{proof}

\subsection{The representation of $-R_0(x,t)+2\sum\limits_{l\in Z^3}b^2_{1l}(x,t)Re(B_{k_1}\otimes B_{-{k_1}})$}

\indent

First, from the definition (\ref{d:l amp}) of $b_{1l}(x,t)$ and identity (\ref{p:identity}), we have
\begin{align}
2\sum\limits_{l\in Z^3}b^2_{1l}(x,t)Re(B_{k_1}\otimes B_{-{k_1}})
=&\bar{\rho}(t)\sum\limits_{l\in Z^3}\alpha_l^2(\mu_1 v_0)\gamma_{k_1}^2\Big{(}\frac{R_0(x,t)}{\bar{\rho}(t)}\Big{)}\Big(Id-\frac{k_1}{|k_1|}\otimes\frac{k_1}{|k_1|}\Big)\nonumber\\
=&\bar{\rho}(t)\gamma_{k_1}^2\Big{(}\frac{R_0(x,t)}{\bar{\rho}(t)}\Big{)}\Big(Id-\frac{k_1}{|k_1|}\otimes\frac{k_1}{|k_1|}\Big).\nonumber
\end{align}
where we used $\sum\limits_{l\in Z^3}\alpha_l^2=1$.\\
Moreover, from the identity (\ref{p:split 1}) and estimate (\ref{b:bound R 0}), we can decompose $R_{0}$
\begin{align}
\frac{R_0}{\bar{\rho}(t)}=\sum\limits_{i=1}^{L}\Big(\gamma_{k_i}\Big(\frac{R_0}{\bar{\rho}(t)}\Big)\Big)^2\Big(Id-\frac{k_i}{|k_i|}\otimes\frac{k_i}{|k_i|}\Big).\nonumber
\end{align}
Therefore
\begin{align}
-R_0(x,t)+2\sum\limits_{l\in Z^3}b^2_{1l}(x,t)Re(B_{k_1}\otimes B_{-{k_1}})
=-\bar{\rho}(t)\sum\limits_{i=2}^{L}\Big(\gamma_{k_i}\Big(\frac{R_0}{\bar{\rho}(t)}\Big)\Big)^2\Big(Id-\frac{k_i}{|k_i|}\otimes\frac{k_i}{|k_i|}\Big).\nonumber
\end{align}
Meanwhile, we have
\begin{align}
R_{01}=-\bar{\rho}(t)\sum\limits_{i=2}^{L}\Big(\gamma_{k_i}\Big(\frac{R_0}{\bar{\rho}(t)}\Big)\Big)^2\Big(Id-\frac{k_i}{|k_i|}\otimes\frac{k_i}{|k_i|}\Big)+\delta \mathring{R}_{01}.
\end{align}
Next section, we will prove that $\delta\mathring{R}_{01}$ is small.\\

\subsection{The representation of $f_0+2\sum\limits_{l\in Z^3}\beta_{1l}(x,t)b_{1l}(x,t)A_{k_1}$}

\indent

From the definition (\ref{d:l amp}) of $b_{1l}(x,t)$ and (\ref{d:b1l}) of $\beta_{1l}(x,t)$, we have
\begin{align}
2\sum\limits_{l\in Z^3}\beta_{1l}(x,t)b_{1l}(x,t)A_{k_1}
=\sum\limits_{l\in Z^3}\alpha_l^2(\mu_1 v_0)g_{k_1}(-f_0(x,t))A_{k_1}
=-g_{k_1}(f_0(x,t))A_{k_1},\nonumber
\end{align}
where we used the fact that $g_{k_1}$ is a linear operator.\\
Moreover, using the identity (\ref{p:split 2}), we can decompose $f_{0}$
\begin{align}
f_0(x,t)=\sum\limits_{i=1}^{3}g_{k_i}(f_0(x,t))A_{k_i},\nonumber
\end{align}
Therefore,
\begin{align}
&f_0+2\sum\limits_{l\in Z^3}\beta_{1l}(x,t)b_{1l}(x,t)A_{k_1}
=\sum\limits_{i=2}^{3}g_{k_i}(f_0(x,t))A_{k_i}\nonumber
\end{align}
Meanwhile, we have
\begin{align}
f_{01}=&f_0+2\sum\limits_{l\in Z^3}\alpha_l(\mu_1v_0)\beta_1(x,t)b_l(x,t)A_{k_1}+\delta f_{01}
=\sum\limits_{i=2}^{3}g_{k_i}(f_0(x,t))A_{k_i}+\delta f_{01}
\end{align}
Again in next section, we will prove that $\delta f_{01}$ is small.\\

 \section{Estimates on $\delta \mathring{R}_{01}$ and $\delta f_{01}$}

 \indent

First, we introduce some H\"{o}lder (semi)norm:
\begin{align*}
    \|f\|_{0}:=\hbox{sup}_{T^{3}\times [0,1]}|f|,\qquad [f]_{m}:=\hbox{max}_{|\beta|=m}\|D^{\beta}f\|_{0},\\
[f]_{m+\alpha}:=\hbox{max}_{|\beta|=m}\hbox{sup}_{x\neq y,t\in [0,1]}\frac{|D^{\beta}f(t,x)-D^{\beta}f(t,y)|}{|x-y|^{\alpha}},\\
\|f\|_{m}:=\sum_{0}^{m}[f]_{j},\|f\|_{m+\alpha}:=\|f\|_{m}+[f]_{m+\alpha}.
\end{align*}
where $ m=0,1,2,...$ and $\alpha\in (0,1)$, $\beta$ is a multi index.
 When $f(t,x)=f(x)$, the above H\"{o}lder(semi) norm denote usual spatial H\"{o}lder(semi) norm.
Moreover, in the subsequent estimate, unless otherwise stated, $\alpha\in (0,1)$  and $C_1$ denotes a constant which depends on $v_0,\mathring{R_0},e(t)$ as well as $\lambda_0,\alpha,\delta$,
  but does not depend on $\mu_1,\lambda_1$, and can change
  from line to line. Furthermore, we assume that $1\ll\mu_1\ll\lambda_1$.

 In the following, we frequently use the elementary inequalities
 \begin{align}
\|fg\|_{r}\leq& C\bigl(\|f\|_{r}\|g\|_0+\|g\|_{r}\|f\|_0\bigr),\label{i:inequality 1}\\
{[}f{]}_s \leq& C([f]_r+\|f\|_0).\label{i:inequality 2}
\end{align}
for any $r\geq0$ and $0\leq s<r$. These inequalities can be found in \cite{CDL2}.

We summarize the main properties of $b_{1l}$ and $\beta_{1l}$.
\begin{Lemma}\label{l:estimate amp}
For any $|l|\leq C_1\mu_1$ and $r\geq0$, we have
\begin{align}
\|b_{1l}\|_r\leq C_1\mu_1^r,\nonumber\\
\|\beta_{1l}\|_r\leq C_1\mu_1^r,\nonumber\\
\|(\partial_t+\frac{l}{\mu_1}\cdot\nabla)b_{1l}\|_r\leq C_1\mu_1^{r+1},\nonumber\\
\|(\partial_t+\frac{l}{\mu_1}\cdot\nabla)\beta_{1l}\|_r\leq C_1\mu_1^{r+1}.\nonumber
\end{align}
\begin{proof}
We only need to prove them when $r$ is integer, others can be derived by inequality (\ref{i:inequality 2}).  Recall that
 \begin{align}
    b_{1l}(x,t):=\sqrt{\bar{\rho}(t)}\alpha_l(\mu_1 v_0)\gamma_{k_1}\Big{(}\frac{R_0(x,t)}{\bar{\rho}(t)}\Big{)},\nonumber
   \end{align}
Then it is easy to check the first inequality in Lemma \ref{l:estimate amp}. Other three estimates are similar.
\end{proof}
\end{Lemma}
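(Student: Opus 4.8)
The plan is to prove the four estimates in Lemma~\ref{l:estimate amp} by reducing everything to repeated application of the chain rule together with the product inequality \eqref{i:inequality 1}. As noted, by \eqref{i:inequality 2} it suffices to treat the case when $r=m$ is a nonnegative integer, since the fractional part of any H\"older seminorm is controlled by the next integer seminorm plus the sup norm. First I would record the elementary building blocks: from \eqref{b:bound R 0} the argument $\frac{R_0(x,t)}{\bar\rho(t)}$ stays in the ball $B_{r_0}(\Id)$ on which $\gamma_{k_1}$ is smooth, so $\gamma_{k_1}$ and all its derivatives are bounded on the relevant range; likewise $\alpha_l$ is a fixed $C^\infty_c$ function with derivatives bounded \emph{independently of} $l$ (this is crucial and follows from \eqref{p:unity}); and $\bar\rho(t)$, $\sqrt{\bar\rho(t)}$, $\frac1{\bar\rho(t)}$ together with all their time derivatives are bounded by constants of the type $C_1$ since $\bar\rho$ is a fixed smooth positive function of $t$ depending only on $e$, $v_0$, $\mathring R_0$.

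The key point for the first estimate is the appearance of the factor $\mu_1$ inside the arguments $\alpha_l(\mu_1 v_0)$. Each spatial derivative hitting $\alpha_l(\mu_1 v_0)$ produces, via the chain rule, a factor $\mu_1$ times derivatives of $\alpha_l$ composed with $\mu_1 v_0$ times derivatives of $v_0$; since $\|v_0\|_r$ and $\|D\alpha_l\|_0$ are absorbed into $C_1$, $m$ derivatives of $\alpha_l(\mu_1 v_0)$ cost at most $C_1\mu_1^m$ (the Fa\`a di Bruno expansion has each term with total $\mu_1$-power equal to the number of derivatives distributed, hence $\le m$, and the worst is exactly $\mu_1^m$). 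The factor $\gamma_{k_1}\big(\frac{R_0}{\bar\rho}\big)$ carries no $\mu_1$, so its $r$-norm is just $\le C_1$. Combining these through \eqref{i:inequality 1} gives $\|b_{1l}\|_r \le C_1\mu_1^r$, and the identical argument with $\gamma_{k_1}$ replaced by $\frac{g_{k_1}(-f_0)}{\gamma_{k_1}(\cdot)}\cdot\frac{1}{2\sqrt{\bar\rho}}$ — which again contains no $\mu_1$ and is smooth because $\gamma_{k_1}$ is bounded below by $c_{11}>0$ — yields $\|\beta_{1l}\|_r\le C_1\mu_1^r$. Here one uses that $f_0$ is a fixed smooth function, so $\|g_{k_1}(-f_0)\|_r\le C_1$.

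For the last two estimates, the observation is that the transport operator $\partial_t + \frac{l}{\mu_1}\cdot\nabla$ is precisely designed to be ``cheap'' on functions of the form $\alpha_l(\mu_1 v_0)$: applied to that factor it gives $\mu_1\, D\alpha_l(\mu_1 v_0)\cdot(\partial_t v_0 + \frac{l}{\mu_1}\cdot\nabla v_0)$, and since $|l|\le C_1\mu_1$ the coefficient $\frac{l}{\mu_1}$ is bounded, so the whole thing is $O(\mu_1)$ in $\|\cdot\|_0$ — one net power of $\mu_1$ rather than the two one might naively fear. Then $r$ further spatial derivatives, by the same chain-rule bookkeeping as above, cost another $C_1\mu_1^r$, so $\|(\partial_t+\frac{l}{\mu_1}\cdot\nabla)b_{1l}\|_r\le C_1\mu_1^{r+1}$; applying the transport operator to the $\mu_1$-free factors $\bar\rho$, $\gamma_{k_1}(R_0/\bar\rho)$ only contributes lower-order terms, handled by \eqref{i:inequality 1}. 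The $\beta_{1l}$ case is word-for-word the same. I expect the main obstacle to be purely organizational rather than conceptual: one must carry out the multivariate Fa\`a di Bruno / Leibniz expansion carefully enough to confirm that no term ever accumulates more than the claimed power of $\mu_1$ — in particular that derivatives landing on $v_0$, $R_0$, $\bar\rho$, or on $\alpha_l$'s own arguments never generate extra $\mu_1$'s beyond one per derivative — and to verify uniformity in $l$ on the range $|l|\le C_1\mu_1$. All of this is routine once the bookkeeping is set up, which is why the proof in the paper is appropriately terse.
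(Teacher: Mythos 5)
Your proposal is correct and follows exactly the route the paper intends: the paper's own proof is just the one-line remark that the estimates are ``easy to check'' from the explicit formula for $b_{1l}$, and your chain-rule/Leibniz bookkeeping (one factor of $\mu_1$ per derivative landing on $\alpha_l(\mu_1 v_0)$, no $\mu_1$ from the $\gamma_{k_1}$, $\bar\rho$, or $g_{k_1}(-f_0)$ factors, and the transport operator costing only one net power of $\mu_1$ because $|l|/\mu_1$ is bounded) is precisely the omitted verification. No gaps.
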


Then from the definition (\ref{d:w 1oc}) on $w_{10c}$, we have the following estimates
 \begin{Lemma}[Estimates on corrections]\label{e: estimate correction}
 \begin{align}
 \|w_{1oc}\|_\alpha\leq C_1\frac{\mu_1}{\lambda_1^{1-\alpha}},\qquad \forall \alpha\in [0,1)
 \end{align}
 \begin{proof}
 From the definition (\ref{d:w 1ocl}), inequality (\ref{i:inequality 1}) and Lemma \ref{l:estimate amp}
 \begin{align}
 \|w_{1ocl}\|_\alpha
 \leq C_1\Big(\frac{\|\nabla b_{1l}\|_\alpha}{\lambda_1}+\frac{\|\nabla b_{1l}\|_0}{\lambda_1^{1-\alpha}}\Big)
 \leq C_1\frac{\mu_1}{\lambda_1^{1-\alpha}}.\nonumber
 \end{align}
 Then, from the property (\ref{p:unity}) of $\alpha_l$, we have
 \begin{align}
 \|w_{1oc}\|_\alpha\leq C_1\frac{\mu_1}{\lambda_1^{1-\alpha}}.\nonumber
 \end{align}
 \end{proof}
 \end{Lemma}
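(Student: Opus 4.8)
The plan is to estimate the $\alpha$-Hölder norm of the correction $w_{1oc}=\sum_{l\in Z^3}w_{1ocl}$ directly from its defining formula \eqref{d:w 1ocl}, using the amplitude estimates of Lemma \ref{l:estimate amp} and the product inequality \eqref{i:inequality 1}. First I would fix an index $l$ with $|l|\leq C_1\mu_1$ (the only relevant ones, since $\mathrm{supp}\,\alpha_l(\mu_1 v_0)\neq\emptyset$ forces $|l|\lesssim\mu_1\|v_0\|_0$) and look at the single term
\begin{align}
w_{1ocl}=\frac{1}{\lambda_1\lambda_0}\Big(\frac{\nabla b_{1l}\times B_{k_1}}{2^{|l|}}e^{i\lambda_1 2^{|l|}k_1\cdot(x-\frac{l}{\mu_1}t)}+\frac{\nabla b_{1l}\times B_{-k_1}}{2^{|l|}}e^{-i\lambda_1 2^{|l|}k_1\cdot(x-\frac{l}{\mu_1}t)}\Big).\nonumber
\end{align}
The building blocks are a smooth amplitude $\nabla b_{1l}$ and a complex exponential of frequency $\lambda_1 2^{|l|}k_1$. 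The key elementary facts I would use are: the $C^0$ norm of the exponential is $1$ while its $C^\alpha$ seminorm is $\lesssim(\lambda_1 2^{|l|})^\alpha$; and by Lemma \ref{l:estimate amp}, $\|\nabla b_{1l}\|_0\leq C_1\mu_1$ and $\|\nabla b_{1l}\|_\alpha\leq C_1\mu_1^{1+\alpha}$. Applying \eqref{i:inequality 1} to the product $\nabla b_{1l}\cdot(\text{exponential})$ gives a bound $\lesssim \|\nabla b_{1l}\|_\alpha\|e^{i\cdots}\|_0+\|\nabla b_{1l}\|_0\|e^{i\cdots}\|_\alpha\lesssim \mu_1^{1+\alpha}+\mu_1(\lambda_1 2^{|l|})^\alpha$, and after dividing by $\lambda_1\lambda_0 2^{|l|}$ and using $\mu_1\ll\lambda_1$ and the factor $2^{-|l|}$ to absorb $2^{\alpha|l|}$, the dominant term is $C_1\mu_1/\lambda_1^{1-\alpha}$; the $\mu_1^{1+\alpha}/(\lambda_1 2^{|l|})$ contribution is even smaller since $\mu_1^\alpha\leq\lambda_1^\alpha$. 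This yields $\|w_{1ocl}\|_\alpha\leq C_1\mu_1/\lambda_1^{1-\alpha}$ uniformly in the relevant $l$, exactly as the author's short proof records.

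Next I would pass from the single-term estimate to the sum. The point is that, by \eqref{p:unity}, the supports of $\alpha_l(\mu_1 v_0)$ and $\alpha_{l'}(\mu_1 v_0)$ are disjoint once $|l-l'|\geq 2$, so at any given point $(x,t)$ at most a bounded number (independent of all parameters) of the $w_{1ocl}$ are nonzero. For the $C^0$ norm this bounded-overlap immediately gives $\|w_{1oc}\|_0\leq C_1\mu_1/\lambda_1$. For the $C^\alpha$ seminorm one has to be slightly more careful because the Hölder difference quotient at points $x\neq y$ can see neighboring patches; but the standard argument — split into the case $|x-y|$ small, where one stays within a fixed bounded cluster of overlapping patches and uses the per-term $C^\alpha$ bound, and the case $|x-y|$ bounded below, where one just uses the $C^0$ bound times $|x-y|^{-\alpha}\leq C$ — shows $\|w_{1oc}\|_\alpha\leq C_1\mu_1/\lambda_1^{1-\alpha}$ as well. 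This finite-overlap bookkeeping is the only mildly delicate part, and it is precisely what the phrase ``from the property \eqref{p:unity} of $\alpha_l$'' in the author's proof is invoking.

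I do not expect any real obstacle here: the statement is a routine consequence of the explicit formula, Lemma \ref{l:estimate amp}, and the product rule \eqref{i:inequality 1}. If anything needs care it is (i) keeping track of the harmless factors $2^{|l|}$ and $2^{\alpha|l|}$ and noting that the restriction $|l|\leq C_1\mu_1$ means these are all bounded once combined with the prefactor $2^{-|l|}$, and (ii) the finite-overlap argument for summing $C^\alpha$ seminorms. Both are standard in this convex-integration framework, so the proof is genuinely as short as the one given.
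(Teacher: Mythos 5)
Your proposal is correct and follows essentially the same route as the paper: bound each $\|w_{1ocl}\|_\alpha$ by $C_1\bigl(\|\nabla b_{1l}\|_\alpha/\lambda_1+\|\nabla b_{1l}\|_0/\lambda_1^{1-\alpha}\bigr)\leq C_1\mu_1/\lambda_1^{1-\alpha}$ via the product inequality and Lemma \ref{l:estimate amp}, then sum using the bounded overlap of the $\alpha_l(\mu_1 v_0)$ from \eqref{p:unity}. Your explicit case-split for the H\"older seminorm of the sum is just a careful spelling-out of what the paper's one-line appeal to \eqref{p:unity} tacitly uses.
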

 \subsection{Estimates on $\delta \mathring{R}_{01}$}

 \indent

 As in \cite{CDL2}, we also split the stresses into three parts, they are \\
   (1) the oscillation part $$\mathcal{R}(\hbox{div}M_1)-\mathcal{R}
   \Big(\Big(\chi_1-\fint_{T^{3}}{\chi_1}dx\Big)e_3\Big),$$
   (2) the transportation part
  \begin{align}
  &\mathcal{R}\Big\{\partial_tw_{1oc}+\partial_{t}w_{1o}
   +\hbox{div}\Big[\sum_{l\in Z^3}\Big(w_{1l}\otimes\frac{l}{\mu_{1}}+\frac{l}{\mu_{1}}\otimes w_{1l}\Big)\Big]\Big\}\nonumber\\
   =&\mathcal{R}\Big(\partial_tw_{1}+\sum_{l\in Z^3}\Big(\frac{l}{\mu_{1}}\cdot\nabla\Big)w_{1l}\Big),\nonumber
   \end{align}
   (3) the error part
  \begin{align}
  &N_1-tr(N_1)Id+(w_{1o}\otimes w_{1oc}+w_{1oc}\otimes w_{1o}-w_{1oc}\otimes w_{1oc})-\frac{2w_{1o}\cdot w_{1oc}+|w_{1oc}|^2}{3}Id.\nonumber
   \end{align}
In the following, we will estimate each part separately.

\begin{Lemma}[The oscillation part]\label{e:oscillation estimate}
\begin{align}
\|\mathcal{R}({\rm div}M_1)\|_\alpha\leq C_1\frac{\mu_1^3}{\lambda_1^{1-\alpha}},\label{e:oscillation estimate 1}\\
\Big\|\mathcal{R}\Big(\Big(\chi_1-\fint_{T^{3}}{\chi_1}dx\Big)e_3\Big)\Big\|_\alpha\leq C_1\frac{\mu_1}{\lambda_1^{1-\alpha}}.\label{e:oscillation estimate 2}
\end{align}
\begin{proof}
We start with the fact that $k_1\cdot B_{k_1}=0,$ then
\begin{align}
\hbox{div}M_1=&\sum\limits_{l\in Z^3}\Big{(}B_{k_1}\otimes B_{k_1}e^{2i\lambda_1 2^{|l|} k_1\cdot (x-\frac{l}{\mu}t)}+B_{-k_1}\otimes B_{-k_1}
   e^{-2i\lambda_1 2^{|l|} k_1\cdot (x-\frac{l}{\mu}t)}\Big{)}\nabla(b^2_{1l}(x,t))\nonumber\\
   &+\sum\limits_{l,l'\in Z^3 ,l\neq l'}\hbox{div}(w_{1ol}\otimes w_{1ol'})\nonumber\\
   =&M_{11}+M_{12}.\nonumber
\end{align}
By the Lemma \ref{l:estimate amp} and property (\ref{p:R G}) on $\mathcal{R}$,
\begin{align}
\|\mathcal{R}(\hbox{div}M_{11})\|_\alpha
\leq \sum\limits_{|l|\leq C_1\mu_1}C_1\frac{\mu_1}{\lambda_1^{1-\alpha}}
\leq C_1\frac{\mu_1^2}{\lambda_1^{1-\alpha}},\nonumber
\end{align}
where we used the following inequality which deduced from Lemma \ref{l:estimate amp}, for $\mu_1\ll\lambda_1$ and $m$ a fixed large number,
\begin{align}\label{e:R b1l}
&\Big\|\mathcal{R}\Big(\nabla b_{1l}e^{i\lambda_1 2^{|l|} k_1\cdot x}\Big)\Big\|_{\alpha}\nonumber\\ \leq&C_{1,m}\Big(\frac{\|\nabla b_{1l}\|_0}{\lambda_1^{1-\alpha}}+\frac{[\nabla b_{1l}]_m}{\lambda_1^{m-\alpha}}+\frac{[\nabla b_{1l}]_{m+\alpha}}{\lambda_1^m}\Big)\nonumber\\
\leq&C_1\frac{\mu_1}{\lambda_1^{1-\alpha}}
\end{align}
This fact will be used frequently in the subsequence estimates.\\

By direct calculations,
\begin{align}
M_{12}=&\sum\limits_{l,l'\in Z^3 ,l\neq l'}b_{1l}b_{1l'}\Big(B_{k_1}\otimes B_{k_1}e^{i\lambda_1(2^{|l|}+2^{|l'|})k_1\cdot x-ig_{1,l,l'}(t)}+B_{k_1}\otimes B_{-k_1}e^{i\lambda_1(2^{|l|}-2^{|l'|})k_1\cdot x-i\overline{g}_{1,l,l'}(t)}\nonumber\\
&+B_{-k_1}\otimes B_{k_1}e^{i\lambda_1(2^{|l'|}-2^{|l|})k_1\cdot x+i\overline{g}_{1,l,l'}(t)}+B_{-k_1}\otimes B_{-k_1}e^{-i\lambda_1(2^{|l|}+2^{|l'|})k_1\cdot x-ig_{1,l,l'}(t)}\Big),\nonumber
\end{align}
where $$g_{1,l,l'}(t)=\lambda_1\Big(2^{|l|}k_1\cdot\frac{l}{\mu_1}t+2^{|l'|}k_1\cdot\frac{l'}{\mu_1}t\Big),$$
 and $$\overline{g}_{1,l,l'}(t)=\lambda_1\Big(2^{|l|}k_1\cdot\frac{l}{\mu_1}t-
 2^{|l'|}k_1\cdot\frac{l'}{\mu_1}t\Big).$$
Since $k_1\cdot B_{k_1}=k_1\cdot B_{-k_1}=0$, we have
 \begin{align}
\hbox{div}M_{12}=&\sum\limits_{l,l'\in Z^3 ,l\neq l'}\Big(B_{k_1}\otimes B_{k_1}e^{i\lambda_1(2^{|l|}+2^{|l'|})k_1\cdot x-ig_{l,l'}(t)}+B_{k_1}\otimes B_{-k_1}e^{i\lambda_1(2^{|l|}-2^{|l'|})k_1\cdot x-i\overline{g}_{l,l'}(t)}\nonumber\\
&+B_{-k_1}\otimes B_{k_1}e^{i\lambda_1(2^{|l'|}-2^{|l|})k_1\cdot x+i\overline{g}_{l,l'}(t)}+B_{-k_1}\otimes B_{-k_1}e^{-i\lambda_1(2^{|l|}+2^{|l'|})k_1\cdot x-ig_{l,l'}(t)}\Big)\nabla(b_{1l}b_{1l'}),\nonumber
\end{align}
then using Lemma \ref{l:estimate amp} and property (\ref{p:R G}) on $\mathcal{R}$, we have
\begin{align}
\|\mathcal{R}(\hbox{div}M_{12})\|_\alpha
\leq \sum\limits_{|l|,|l'|\leq C_1\mu_1}C_1\frac{\mu_1}{\lambda_1^{1-\alpha}}
\leq C_1\frac{\mu_1^3}{\lambda_1^{1-\alpha}}.\nonumber
\end{align}
Then we obtain the estimate (\ref{e:oscillation estimate 1}).\\

From Lemma \ref{l:estimate amp} and the property (\ref{p:R G}) of $\mathcal{R}$, similarly we obtain
\begin{align}
\Big\|\mathcal{R}\Big(\Big(\chi_1-\fint_{T^{3}}{\chi_1}dx\Big)e_3\Big)\Big\|_\alpha
\leq \sum\limits_{|l|\leq C_1\mu_1}\frac{C_1}{\lambda_1^{1-\alpha}}
\leq C_1\frac{\mu_1}{\lambda_1^{1-\alpha}}.\nonumber
\end{align}
where we used $\mathcal{R}(f(t))=0$ which can be easily proved from the definition of $\mathcal{R}$ and then we complete the proof of this lemma.
\end{proof}
\end{Lemma}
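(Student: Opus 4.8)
The plan is to prove the Lemma on the oscillation part by exploiting the key algebraic fact $k_1\cdot B_{k_1}=k_1\cdot B_{-k_1}=0$ (from the construction of $B_{k_1}$ in Section 4), so that the highly-oscillatory exponentials in $M_1$ and $\chi_1$ are annihilated by the divergence exactly at leading order, leaving only terms where the derivative falls on the slowly-varying amplitudes $b_{1l}$ or $\beta_{1l}$. The gain of a factor $\lambda_1^{-1}$ (up to $\lambda_1^{\alpha}$) will come from inverting $\mathrm{div}$ via the operator $\mathcal{R}$ applied to functions of the form $(\textrm{slow amplitude})\, e^{i\lambda_1 2^{|l|}k_1\cdot x}$, together with the stationary-phase-type estimate for $\mathcal{R}$ recorded in the Appendix (property (\ref{p:R G})). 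The finitely many powers of $\mu_1$ that appear are simply bookkeeping: each amplitude and each of its derivatives costs at most $C_1\mu_1^{r}$ by Lemma \ref{l:estimate amp}, and the sums over $l$ (and over $l,l'$) are effectively finite since $\alpha_l(\mu_1 v_0)$ forces $|l|\le C_1\mu_1$, contributing at most $C_1\mu_1^3$ terms.

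First I would split $\mathrm{div}\,M_1$ into the diagonal piece $M_{11}$ (coming from the squared single-frequency terms $b_{1l}^2 B_{\pm k_1}\otimes B_{\pm k_1}e^{\pm 2i\lambda_1 2^{|l|}k_1\cdot(x-\frac{l}{\mu_1}t)}$) and the cross piece $M_{12}=\sum_{l\ne l'}\mathrm{div}(w_{1ol}\otimes w_{1ol'})$. For $M_{11}$, since $B_{\pm k_1}\otimes B_{\pm k_1}$ is a constant matrix and $k_1\cdot B_{\pm k_1}=0$, the divergence hits only $\nabla(b_{1l}^2)$ and the exponential survives untouched; then $\mathcal{R}$ applied to each term $\nabla(b_{1l}^2)\,e^{\pm 2i\lambda_1 2^{|l|}k_1\cdot x}$ is bounded by $C_1\mu_1\lambda_1^{\alpha-1}$ using the inequality (\ref{e:R b1l}) (which itself follows from the Appendix estimate on $\mathcal{R}$ and Lemma \ref{l:estimate amp}, choosing $m$ large so the error terms $\lambda_1^{-m+\alpha}$ are negligible for $\mu_1\ll\lambda_1$). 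Summing over the $\le C_1\mu_1$ relevant $l$ gives $C_1\mu_1^2\lambda_1^{\alpha-1}$, which is within the claimed bound. For $M_{12}$ I would first expand $w_{1ol}\otimes w_{1ol'}$ into the four exponential terms $e^{i\lambda_1(\pm 2^{|l|}\pm 2^{|l'|})k_1\cdot x}$ with slowly-varying coefficients $b_{1l}b_{1l'}$ times the constant matrices $B_{\pm k_1}\otimes B_{\pm k_1}$; again $k_1\cdot B_{\pm k_1}=0$ means $\mathrm{div}$ only differentiates $b_{1l}b_{1l'}$, so $\mathcal{R}(\mathrm{div}\,M_{12})$ is, term by term, of the form $\mathcal{R}(\nabla(b_{1l}b_{1l'})\,e^{i\lambda_1 m_{ll'}k_1\cdot x})$ with $m_{ll'}=\pm 2^{|l|}\pm 2^{|l'|}$ a nonzero integer, hence bounded by $C_1\mu_1\lambda_1^{\alpha-1}$ by the same argument; summing over $|l|,|l'|\le C_1\mu_1$ produces the stated $C_1\mu_1^3\lambda_1^{\alpha-1}$. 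The second estimate (\ref{e:oscillation estimate 2}) is easier: $\chi_1-\fint\chi_1$ is a sum over $l$ of $\beta_{1l}(e^{i\lambda_1 2^{|l|}k_1\cdot x}+\textrm{c.c.})$ minus its mean, and since $\mathcal{R}$ kills functions of $t$ alone, applying $\mathcal{R}$ to each oscillatory term and using the Appendix estimate with Lemma \ref{l:estimate amp} (now only $\|\beta_{1l}\|_0$, no extra $\nabla$) gives $C_1\lambda_1^{\alpha-1}$ per term and $C_1\mu_1\lambda_1^{\alpha-1}$ after summation.

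The only genuinely delicate point is the uniform-in-$l$ application of the $\mathcal{R}$-estimate: because the frequency is $\lambda_1 2^{|l|}k_1$ and $|l|$ can be as large as $C_1\mu_1$, one must make sure the constants in (\ref{e:R b1l}) do not secretly depend on $2^{|l|}$. This is fine since larger $|l|$ only \emph{helps} (higher frequency, so more decay), and one can absorb everything into the worst case $2^{|l|}\ge 1$; moreover the amplitude bound $\|\beta_{1l}\|_r, \|b_{1l}\|_r\le C_1\mu_1^r$ from Lemma \ref{l:estimate amp} is uniform in $l$ over the relevant range $|l|\le C_1\mu_1$. I would also remark that one needs $\mu_1\ll\lambda_1$ (standing assumption of the section) so that the ``high-order'' error terms $[\,\cdot\,]_m\lambda_1^{-m+\alpha}$ in the $\mathcal{R}$-estimate, which carry a factor $\mu_1^{m}\lambda_1^{-m}$, are dominated by the leading $\mu_1\lambda_1^{\alpha-1}$ term. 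With these observations the proof reduces to the routine bookkeeping sketched above; I would present it by first stating and using (\ref{e:R b1l}) as a black-box consequence of the Appendix, then treating $M_{11}$, $M_{12}$, and the temperature term in turn.
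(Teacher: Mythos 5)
Your proposal follows the paper's proof essentially verbatim: the same splitting of $\hbox{div}M_1$ into the diagonal part $M_{11}$ and the cross part $M_{12}$, the same use of $k_1\cdot B_{\pm k_1}=0$ to make the divergence fall on the amplitudes, the same black-box estimate (\ref{e:R b1l}) derived from Lemma \ref{l:estimate amp} and property (\ref{p:R G}), the same counting of at most $C_1\mu_1$ (resp.\ $C_1\mu_1^2$) relevant indices, and the same treatment of the $\chi_1$ term via $\mathcal{R}(f(t))=0$. The one assertion you make that deserves care --- that the frequencies $\pm2^{|l|}\pm2^{|l'|}$ are nonzero for $l\neq l'$, which can fail when $|l|=|l'|$ --- is also left implicit in the paper's own argument, so your write-up is at the same level of rigor as the original.
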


\begin{Lemma}[The transportation part]\label{e:transport estimate}
\begin{align}
\Big\|\mathcal{R}\Big(\partial_tw_{1}+\sum_{l\in Z^3}\frac{l}{\mu_{1}}\cdot\nabla w_{1l}\Big)\Big\|_\alpha\leq C_1\frac{\mu_1^2}{\lambda_1^{1-\alpha}}
\end{align}
\begin{proof}
Recall that
\begin{align}
w_{1l}=B_{1lk_1}e^{i\lambda_1 2^{|l|} k_1\cdot (x-\frac{l}{\mu_1}t)}+B_{-1lk_1}e^{-i\lambda_1 2^{|l|} k_1\cdot (x-\frac{l}{\mu_1}t)}\nonumber
\end{align}
and $$w_1=\sum_{l\in Z^3}w_{1l}.$$
From the identity
$$\Big(\partial_t+\frac{l}{\mu_{1}}\cdot\nabla\Big)e^{\pm i\lambda_1 2^{|l|} k_1\cdot (x-\frac{l}{\mu_1}t)}=0,$$
we have
\begin{align}
&\partial_tw_{1}+\sum_{l\in Z^3}\frac{l}{\mu_{1}}\cdot\nabla w_{1l}\nonumber\\
=&\sum_{l\in Z^3}\Big(\Big{(}\partial_t+\frac{l}{\mu_{1}}\cdot\nabla\Big)B_{1lk_1}e^{i\lambda_1 2^{|l|} k_1\cdot (x-\frac{l}{\mu_1}t)}+\Big(\partial_t+\frac{l}{\mu_{1}}\cdot\nabla\Big)B_{-1lk_1}e^{-i\lambda_1 2^{|l|} k_1\cdot (x-\frac{l}{\mu_1}t)}\Big{)}.\nonumber
   \end{align}
By the property (\ref{p:R G}) of $\mathcal{R}$ and Lemma \ref{l:estimate amp}, we have
\begin{align}
\Big\|\mathcal{R}\Big(\partial_tw_{1}+\sum_{l\in Z^3}\frac{l}{\mu_{1}}\cdot\nabla w_{1l}\Big)\Big\|_\alpha
\leq \sum\limits_{|l|\leq C_1\mu_1}C_1\frac{\mu_1}{\lambda_1^{1-\alpha}}
\leq C_1\frac{\mu_1^2}{\lambda_1^{1-\alpha}}.\nonumber
\end{align}
We obtained the proof the Lemma \ref{e:transport estimate}.
\end{proof}
\end{Lemma}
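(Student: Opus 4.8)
The plan is to exploit the fact that the phases carried by $w_{1l}$ are transported by the constant field $\frac{l}{\mu_1}$. A direct computation would give $\big(\partial_t+\frac{l}{\mu_1}\cdot\nabla\big)e^{\pm i\lambda_1 2^{|l|}k_1\cdot(x-\frac{l}{\mu_1}t)}=0$, the term $k_1\cdot\frac{l}{\mu_1}$ cancelling between the time and space derivatives. Recalling $w_{1l}=B_{1lk_1}e^{i\lambda_1 2^{|l|}k_1\cdot(x-\frac{l}{\mu_1}t)}+B_{-1lk_1}e^{-i\lambda_1 2^{|l|}k_1\cdot(x-\frac{l}{\mu_1}t)}$, the singular operator $\partial_t+\sum_l\frac{l}{\mu_1}\cdot\nabla$ therefore differentiates only the amplitudes:
\begin{align*}
\partial_tw_1+\sum_{l\in Z^3}\tfrac{l}{\mu_1}\cdot\nabla w_{1l}=\sum_{l\in Z^3}\Big[&\big(\partial_t+\tfrac{l}{\mu_1}\cdot\nabla\big)B_{1lk_1}\,e^{i\lambda_1 2^{|l|}k_1\cdot(x-\frac{l}{\mu_1}t)}\\
&+\big(\partial_t+\tfrac{l}{\mu_1}\cdot\nabla\big)B_{-1lk_1}\,e^{-i\lambda_1 2^{|l|}k_1\cdot(x-\frac{l}{\mu_1}t)}\Big],
\end{align*}
so that no power of the large frequency $\lambda_1$ is produced by the derivative.

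Next I would estimate the advective derivatives of the amplitudes. Since $\frac{l}{\mu_1}$ is constant, $\partial_t+\frac{l}{\mu_1}\cdot\nabla$ commutes with spatial derivatives, so writing $B_{\pm1lk_1}=b_{1l}B_{\pm k_1}+\frac{1}{\lambda_1\lambda_0 2^{|l|}}\nabla b_{1l}\times B_{\pm k_1}$ and invoking Lemma \ref{l:estimate amp} would give, for every $r\ge0$ and every $l$ with $|l|\le C_1\mu_1$ (the range forced by $\hbox{supp}\,\alpha_l(\mu_1 v_0)$),
\[
\Big\|\big(\partial_t+\tfrac{l}{\mu_1}\cdot\nabla\big)B_{\pm1lk_1}\Big\|_r\le C_1\Big(\mu_1^{r+1}+\frac{\mu_1^{r+2}}{\lambda_1\lambda_0 2^{|l|}}\Big)\le C_1\mu_1^{r+1},
\]
using $1\ll\mu_1\ll\lambda_1$. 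The leftover time-only factor $e^{\mp i\lambda_1 2^{|l|}k_1\cdot\frac{l}{\mu_1}t}$ is unimodular and, depending on $t$ alone, is irrelevant for the spatial operator $\mathcal{R}$.

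Then I would apply $\mathcal{R}$ term by term and use the stationary-phase bound for $\mathcal{R}$ of Appendix A, in the form (\ref{e:R b1l}), at frequency $\lambda_1 2^{|l|}$: choosing $m$ large and using $\mu_1\ll\lambda_1\le\lambda_1 2^{|l|}$, the leading term would dominate and
\[
\Big\|\mathcal{R}\big((\partial_t+\tfrac{l}{\mu_1}\cdot\nabla)B_{\pm1lk_1}\,e^{\pm i\lambda_1 2^{|l|}k_1\cdot x}\big)\Big\|_\alpha\le C_1\frac{\mu_1}{(\lambda_1 2^{|l|})^{1-\alpha}}.
\]
Summing over $l\in Z^3$, the factor $2^{-(1-\alpha)|l|}$ makes $\sum_{l}2^{-(1-\alpha)|l|}$ a finite constant depending only on $\alpha$ (alternatively one bounds the number of active indices $|l|\le C_1\mu_1$ directly), which would yield the claimed $\big\|\mathcal{R}(\partial_tw_1+\sum_l\frac{l}{\mu_1}\cdot\nabla w_{1l})\big\|_\alpha\le C_1\frac{\mu_1^2}{\lambda_1^{1-\alpha}}$.

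The hard part is the first step, and it is conceptual rather than computational: a careless differentiation of $w_{1l}$ would produce a factor $\lambda_1$ from the phase and make the bound false, so one must notice that the advective derivative $\partial_t+\frac{l}{\mu_1}\cdot\nabla$ annihilates the phase, leaving only the $O(\mu_1)$-Lipschitz amplitudes to be differentiated. A secondary technical point is the summation over $l$: because $\mathcal{R}$ is nonlocal, the disjoint-support trick that worked for $\|w_{1o}\|_0$ is unavailable here, and one must instead lean on the gain that $\mathcal{R}$ furnishes at the $l$-dependent frequency $\lambda_1 2^{|l|}$ to close the series.
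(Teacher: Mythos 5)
Your proposal is correct and follows essentially the same route as the paper: you use that the advective derivative $\partial_t+\frac{l}{\mu_1}\cdot\nabla$ annihilates the phase so only the amplitudes $B_{\pm 1lk_1}$ are differentiated, then invoke Lemma \ref{l:estimate amp} and the stationary-phase bound \eqref{p:R G} for $\mathcal{R}$ and sum over the $O(\mu_1)$ active indices $l$. The extra remarks on the $2^{-(1-\alpha)|l|}$ gain are a harmless refinement of the same counting the paper performs.
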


\begin{Lemma}[Estimates on error part I]\label{e:error 1}
\begin{align}
\|N_1\|_0\leq \frac{C_1}{\mu_1}.
\end{align}
\end{Lemma}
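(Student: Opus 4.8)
The plan is to estimate $N_1$ directly from its definition
\[
N_1=\sum_{l\in Z^3}\Big[w_{1l}\otimes\Big(v_0-\tfrac{l}{\mu_1}\Big)+\Big(v_0-\tfrac{l}{\mu_1}\Big)\otimes w_{1l}\Big],
\]
by exploiting two crucial facts: (i) the support property of the partition of unity, which restricts the sum to those $l$ with $\mu_1 v_0\in\operatorname{supp}\alpha_l$, i.e.\ $|l-\mu_1 v_0|\leq c_2<1$, so for each $(x,t)$ at most a bounded number ($O(1)$, in fact at most $2^3$) of indices $l$ contribute a nonzero term; and (ii) on the support of $\alpha_l(\mu_1 v_0)$ one has $|v_0-l/\mu_1|=\tfrac{1}{\mu_1}|l-\mu_1 v_0|\leq \tfrac{c_2}{\mu_1}\leq \tfrac{C_1}{\mu_1}$. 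Thus the factor $v_0-l/\mu_1$ is \emph{small}, of order $\mu_1^{-1}$, precisely where the corresponding summand is nonzero.

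First I would note that $w_{1l}=B_{1lk_1}e^{i\lambda_1 2^{|l|}k_1\cdot(x-\frac{l}{\mu_1}t)}+B_{-1lk_1}e^{-i\lambda_1 2^{|l|}k_1\cdot(x-\frac{l}{\mu_1}t)}$, so $\|w_{1l}\|_0\leq 2(\|B_{1lk_1}\|_0+\|B_{-1lk_1}\|_0)\leq C_1$ by the bound $\|b_{1l}\|_0\leq C_1$ from Lemma~\ref{l:estimate amp} (together with $\lambda_0,\lambda_1\geq 1$, which control the correction term $\tfrac{1}{\lambda_1\lambda_0 2^{|l|}}\nabla b_{1l}\times B_{\pm k_1}$ appearing in $B_{\pm 1lk_1}$). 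Hence each summand of $N_1$ is pointwise bounded by
\[
\Big|w_{1l}\otimes\Big(v_0-\tfrac{l}{\mu_1}\Big)+\Big(v_0-\tfrac{l}{\mu_1}\Big)\otimes w_{1l}\Big|\leq 2\,\|w_{1l}\|_0\,\Big|v_0-\tfrac{l}{\mu_1}\Big|\leq \frac{C_1}{\mu_1},
\]
where the last inequality uses fact (ii) above, valid on the support of the amplitude $b_{1l}$ (which contains the factor $\alpha_l(\mu_1 v_0)$). Summing over the at most $O(1)$ indices $l$ that are nonzero at a given point, and taking the supremum over $(x,t)\in T^3\times[0,1]$, gives $\|N_1\|_0\leq C_1/\mu_1$ as claimed.

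There is no serious obstacle here; the only point requiring a little care is the bookkeeping of \emph{locally finite} overlap — one must observe that although the sum over $l\in Z^3$ is infinite, for each fixed $(x,t)$ only those $l$ within distance $c_2<1$ of $\mu_1 v_0(x,t)$ contribute, and there are at most $2^3$ such lattice points, so the constant absorbed into $C_1$ is uniform in $(x,t)$ and independent of $\mu_1,\lambda_1$. (This is the same mechanism already invoked after \eqref{d:1o} to make the defining sums meaningful.) The smallness $\mu_1^{-1}$ is then entirely produced by the Galilean-shift factor $v_0-l/\mu_1$ being small on the support of $\alpha_l(\mu_1 v_0)$; this is exactly the reason the parameter $\mu_1$ was introduced in the construction.
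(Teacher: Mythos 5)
Your proposal is correct and follows essentially the same argument as the paper: the smallness comes from $|v_0-\tfrac{l}{\mu_1}|\leq \tfrac{c_2}{\mu_1}$ on the support of the amplitude $b_{1l}$ (hence of $\nabla b_{1l}$), together with the locally finite overlap of the $\alpha_l$. The only cosmetic difference is that the paper splits $N_1=N_{11}+N_{12}$ according to $w_{1l}=w_{1ol}+w_{1ocl}$ and bounds the correction piece by $C_1/\lambda_1$, whereas you treat $w_{1l}$ as a whole via the amplitudes $B_{\pm 1lk_1}$; both yield the stated bound.
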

\begin{proof}
From the definition of $N_{1}$, we have
\begin{align}
 N_1=&\sum_{l\in Z^3}\Big[w_{1l}\otimes \Big(v_0-\frac{l}{\mu_{1}}\Big)
   +\Big(v_0-\frac{l}{\mu_{1}}\Big{)}\otimes w_{1l}\Big{)}\Big]\nonumber\\
   =&\sum_{l\in Z^3}\Big[w_{1ol}\otimes \Big(v_0-\frac{l}{\mu_{1}}\Big)
   +\Big(v_0-\frac{l}{\mu_{1}}\Big{)}\otimes w_{1ol}\Big{)}\Big]\nonumber\\
   &+\sum_{l\in Z^3}\Big[w_{1ocl}\otimes \Big(v_0-\frac{l}{\mu_{1}}\Big)
   +\Big(v_0-\frac{l}{\mu_{1}}\Big{)}\otimes w_{1ocl}\Big{)}\Big]\nonumber\\
   =&N_{11}+N_{12}.\nonumber
\end{align}
For the term $N_{11}$, using the definition (\ref{d:w 1ol}) of $w_{1ol}$ , we have
\begin{align}
&\sum_{l\in Z^3}w_{1ol}\otimes \Big(v_0-\frac{l}{\mu_{1}}\Big)\nonumber\\
=&\sum_{l\in Z^3}b_{1l}(x,t)\Big(B_{k_1}e^{i\lambda_1 2^{|l|} k_1\cdot (x-\frac{l}{\mu_1}t)}\nonumber+B_{-{k_1}}e^{-i\lambda_1 2^{|l|} k_1\cdot (x-\frac{l}{\mu_1}t)}\Big)\otimes \Big(v_0-\frac{l}{\mu_{1}}\Big).\nonumber
\end{align}
Obviously, $b_{1l}(x,t)\neq0$ if and only if $|\mu_1 v_0-l|\leq1.$
By (\ref{p:unity}), it's easy to check
\begin{align}
\Big\|\sum_{l\in Z^3}w_{1ol}\otimes \Big(v_0-\frac{l}{\mu_{1}}\Big)\Big\|_0\leq \frac{C_1}{\mu_1}.\nonumber
\end{align}
Then, we have
\begin{align}
\|N_{11}\|_0\leq \frac{C_1}{\mu_1}.\nonumber
\end{align}
Similarly, by Lemma \ref{e: estimate correction}, we have
\begin{align}
\|N_{12}\|_0\leq \frac{C_1}{\lambda_1}.\nonumber
\end{align}
Then we obtain (6.66), in particular,
\begin{align}
\|trN_1\|_0\leq \frac{C_1}{\mu_1},\nonumber
\end{align}
and we complete our proof of this lemma.
\end{proof}

\begin{Lemma}[Estimates on error part II]\label{e: error 2}
\begin{align}
\|w_{1o}\otimes w_{1oc}+w_{1oc}\otimes w_{1o}+w_{1oc}\otimes w_{1oc}\|_0\leq C_1\frac{\mu_1}{\lambda_1}.
\end{align}
\begin{proof}
From the estimate (\ref{e:bound 1o}) and  Lemma \ref{e: estimate correction}, we have
\begin{align}
&\|w_{1o}\otimes w_{1oc}+w_{1oc}\otimes w_{1o}+w_{1oc}\otimes w_{10c}\|_0\nonumber\\
\leq& C_1(\|w_{1o}\|_0\|w_{1oc}\|_0+\|w_{1oc}\|_0^2)\nonumber\\
\leq& C_1\frac{\mu_1}{\lambda_1},\nonumber
\end{align}
therefore
\begin{align}
\|2w_{1o}\cdot w_{1oc}+|w_{1oc}|^2\|_0\leq C_1\frac{\mu_1}{\lambda_1}.\nonumber
\end{align}
\end{proof}
\end{Lemma}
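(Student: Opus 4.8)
The plan is to reduce the estimate to two facts already in hand: the uniform bound $\|w_{1o}\|_0\le \frac{M\sqrt{\delta}}{2L}$ from (\ref{e:bound 1o}), which is independent of $\mu_1$ and $\lambda_1$, and the smallness of the correction, $\|w_{1oc}\|_0\le C_1\frac{\mu_1}{\lambda_1}$, which is precisely Lemma \ref{e: estimate correction} evaluated at $\alpha=0$. Since the matrix norm $|R|=\max_{i,j}|R_{ij}|$ obeys $|a\otimes b|\le |a|\,|b|$ for vectors $a,b$, one gets $\|u\otimes w\|_0\le\|u\|_0\|w\|_0$ for smooth vector fields $u,w$, so each of the three tensor products splits into a product of $C^0$ norms.

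First I would treat the two mixed terms: both $\|w_{1o}\otimes w_{1oc}\|_0$ and $\|w_{1oc}\otimes w_{1o}\|_0$ are at most $\|w_{1o}\|_0\|w_{1oc}\|_0\le \frac{M\sqrt{\delta}}{2L}\cdot C_1\frac{\mu_1}{\lambda_1}\le C_1\frac{\mu_1}{\lambda_1}$, the absolute constants $M$, $L$ and the fixed parameter $\delta$ being absorbed into $C_1$. Then I would treat the purely quadratic term: $\|w_{1oc}\otimes w_{1oc}\|_0\le\|w_{1oc}\|_0^2\le C_1\frac{\mu_1^2}{\lambda_1^2}$, and by the standing assumption $1\ll\mu_1\ll\lambda_1$ (hence $\mu_1/\lambda_1\le 1$) this is again $\le C_1\frac{\mu_1}{\lambda_1}$. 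Adding the three bounds yields the claim. The same two product inequalities, applied now to the pointwise inner product $w_{1o}\cdot w_{1oc}$ and the pointwise square $|w_{1oc}|^2$, give the companion estimate $\|2w_{1o}\cdot w_{1oc}+|w_{1oc}|^2\|_0\le C_1\frac{\mu_1}{\lambda_1}$.

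There is essentially no obstacle here: all of the content lives in Lemma \ref{e: estimate correction}, that is, in the observation that the prefactor $\frac{1}{\lambda_1\lambda_0}$ in the definition (\ref{d:w 1ocl}) of $w_{1ocl}$ overpowers the single derivative falling on $b_{1l}$ (which, by Lemma \ref{l:estimate amp}, costs only one power of $\mu_1$), leaving the net small factor $\mu_1/\lambda_1$. The only matters requiring care are bookkeeping — keeping straight which constants may be swallowed by $C_1$ — and the use of $\mu_1\ll\lambda_1$ to dispose of the term quadratic in the correction; it is also worth emphasising that $w_{1o}$ is bounded \emph{uniformly} in $\mu_1,\lambda_1$ by (\ref{e:bound 1o}), not merely by some $C_1\mu_1^{r}$, so that the two mixed terms do not acquire a spurious extra power of $\mu_1$.
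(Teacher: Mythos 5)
Your proposal is correct and follows the same route as the paper: bound the mixed terms by $\|w_{1o}\|_0\|w_{1oc}\|_0$ using (\ref{e:bound 1o}) and Lemma \ref{e: estimate correction}, bound the quadratic term by $\|w_{1oc}\|_0^2$ and absorb it via $\mu_1\ll\lambda_1$, then take the trace for the companion estimate. Your write-up is merely more explicit about the individual steps than the paper's two-line version.
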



Finally, by  Lemma \ref{e:oscillation estimate}, Lemma \ref{e:transport estimate}, Lemma \ref{e:error 1} and Lemma \ref{e: error 2}, we conclude that
\begin{align}
\|\delta\mathring{R}_{01}\|_0\leq C_1\Big(\frac{\mu_1^3}{\lambda_1^{1-\alpha}}+\frac{1}{\mu_1}\Big),\qquad \forall\alpha\in(0,1).
\end{align}
Furthermore, from the above estimates and constructions of $v_{01}, p_{01}$, we conclude that
\begin{align}
\|v_{01}-v_0\|_0\leq& \frac{M\sqrt{\delta}}{2L}+C_1\frac{\mu_1}{\lambda_1},\\
\|p_{01}-p_0\|_0\leq& C_1\Big(\frac{1}{\mu_1}+\frac{\mu_1}{\lambda_1} \Big).
\end{align}

\subsection{Estimates on $\delta f_{01}$}

 \indent

Recall (4.55), as before, we split $\delta f_{01}$ into three parts:\\
(1) the oscillation part
\begin{align}
\mathcal{G}(\hbox{div}K_1)+\mathcal{G}(w_1\cdot\nabla\theta_0),\nonumber
\end{align}
(2) the transportation part
\begin{align}
\mathcal{G}\Big(\partial_t\chi_{1}+\sum_{l\in Z^3}\Big(\frac{l}{\mu_{1}}\cdot\nabla\Big)\chi_{1l}\Big),\nonumber
\end{align}
(3) the error part
\begin{align}
w_{10c}\chi_1+\sum\limits_{l\in Z^3}\Big(v_0-\frac{l}{\mu}\Big)\chi_{1l}.\nonumber
\end{align}

\begin{Lemma}[The Oscillation Part]\label{e:osci}
\begin{align}
\|\mathcal{G}({\rm div}K_1)+\mathcal{G}(w_1\cdot\nabla\theta_0)\|_\alpha\leq C_1\frac{\mu_1^3}{\lambda_1^{1-\alpha}}.
\end{align}
\begin{proof}
From (4.53) and the definition of $K_1$, we may rewrite
\begin{align}
\hbox{div}K_1=&\sum\limits_{l\in Z^3}\nabla(\beta_{1l}(x,t)b_{1l}(x,t))\cdot\Big(B_{k_1}e^{2i\lambda_1 2^{|l|} k_1\cdot (x-\frac{l}{\mu_1}t)}+B_{-k_1}e^{-2i\lambda_1 2^{|l|} k_1\cdot (x-\frac{l}{\mu_1}t)}\Big{)}\nonumber\\
   &+\sum\limits_{l,l'\in Z^3 ,l\neq l'}\hbox{div}(w_{1ol}\chi_{1l'})\nonumber\\
   =&K_{11}+K_{12}.\nonumber
\end{align}
By Lemma \ref{l:estimate amp} and the property (\ref{p:R G}) of $\mathcal{G}$ , we have
\begin{align}
\|\mathcal{G}(K_{11})\|_\alpha
\leq \sum\limits_{|l|\leq C_1\mu_1}C_1\frac{\mu_1}{\lambda_1^{1-\alpha}}
\leq C_1\frac{\mu_1^2}{\lambda_1^{1-\alpha}}.\nonumber
\end{align}
Where we used the following inequality which deduced from Lemma \ref{l:estimate amp}, for $\mu_1\ll\lambda_1$ and $m$ a fixed large number,
\begin{align}\label{e:R b12}
&\Big\|\mathcal{R}\Big(\nabla \beta_{1l}e^{i\lambda_1 2^{|l|} k_1\cdot x}\Big)\Big\|_{\alpha}\nonumber\\ \leq&C_{1,m}\Big(\frac{\|\nabla \beta_{1l}\|_0}{\lambda^{1-\alpha}}+\frac{[\nabla \beta_{1l}]_m}{\lambda^{m-\alpha}}+\frac{[\nabla \beta_{1l}]_{m+\alpha}}{\lambda^m}\Big)\nonumber\\
\leq&C_1\frac{\mu_1}{\lambda_1^{1-\alpha}}.
\end{align}
Since $k_1\cdot B_{k_1}=k_1\cdot B_{-k_1}=0$, we have
\begin{align}
K_{12}=&\hbox{div}\Big\{\sum\limits_{l,l'\in Z^3 ,l\neq l'}b_{1l}\beta_{1l'}(x,t)(x,t)\Big{(}B_{k_1}e^{i\lambda_1 2^{|l|} k_1\cdot (x-\frac{l}{\mu_1}t)}\nonumber\\
&+B_{-k_1}e^{-i\lambda_1 2^{|l|} k_1\cdot (x-\frac{l}{\mu_1}t)}\Big{)}
\Big{(}e^{i\lambda_1 2^{|l'|} k_1\cdot (x-\frac{l'}{\mu_1}t)}+e^{-i\lambda_1 2^{|l'|} k_1\cdot (x-\frac{l'}{\mu_1}t)}\Big{)}\Big\}\nonumber\\
=&\sum\limits_{l,l'\in Z^3 ,l\neq l'}\Big{(}B_{k_1}\cdot\nabla(b_{1l}\beta_{1l'})e^{i\lambda_1(2^{|l|}+2^{|l'|})k_1\cdot x-ig_{1,l,l'}(t)}+B_{k_1}\cdot\nabla(b_{1l}\beta_{1l'})e^{i\lambda_1(2^{|l|}-2^{|l'|})k_1\cdot x-i\overline{g}_{1,l,l'}(t)}\nonumber\\
&+B_{-k_1}\cdot\nabla(b_{1l}\beta_{1l'})e^{i\lambda_1(2^{|l'|}-2^{|l|})k_1\cdot x+i\overline{g}_{1,l,l'}(t)}+B_{-k_1}\cdot\nabla(b_{1l}\beta_{1l'})e^{-i\lambda_1(2^{|l|}+
2^{|l'|})k_1\cdot x-ig_{1,l,l'}(t)}\Big),\nonumber
\end{align}
thus from Lemma \ref{l:estimate amp} and the property (\ref{p:R G}) of $\mathcal{G}$ , we have
\begin{align}
\|\mathcal{G}(K_{12})\|_\alpha
\leq \sum\limits_{|l|,|l'|\leq C_1\mu_1}C_1\frac{\mu_1}{\lambda_1^{1-\alpha}}
\leq C_1\frac{\mu_1^3}{\lambda_1^{1-\alpha}}.\nonumber
\end{align}
Then we complete the proof of this lemma.
\end{proof}
\end{Lemma}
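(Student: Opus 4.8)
The plan is to bound the two summands of $\mathcal{G}({\rm div}\,K_1)+\mathcal{G}(w_1\cdot\nabla\theta_0)$ separately by the triangle inequality. In each case I would write the argument of $\mathcal{G}$ as a finite sum of terms $g(x,t)\,e^{i\lambda_1 Nk_1\cdot x}$ with $N\ne0$ and with $g$ varying only on the slow scale $\mu_1$ (so $\|g\|_r\le C_1\mu_1^r$, by Lemma~\ref{l:estimate amp} and \eqref{i:inequality 1}), and then invoke the Schauder-type bounds for $\mathcal{G}$ from Appendix~A --- in the form already used for $\mathcal{R}$ in \eqref{e:R b1l}, i.e.\ $\|\mathcal{G}(g\,e^{i\lambda_1 Nk_1\cdot x})\|_\alpha\le C_{1,m}\big(\|g\|_0(\lambda_1|N|)^{\alpha-1}+[g]_{m+\alpha}(\lambda_1|N|)^{-m}\big)$; for $m$ large and $\mu_1\ll\lambda_1$ the first term dominates, so one gains the factor $(\lambda_1|N|)^{\alpha-1}$ as soon as $|N|$ is bounded below on the relevant scale.

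For $\mathcal{G}({\rm div}\,K_1)$ I would use that $k_1\cdot B_{\pm k_1}=0$ (since $B_{k_1}=A_{k_1}+i\tfrac{k_1}{|k_1|}\times A_{k_1}$ and $k_1\cdot A_{k_1}=0$), so that ${\rm div}$ never differentiates an exponential and ${\rm div}\,K_1=K_{11}+K_{12}$, with $K_{11}=\sum_{l}\nabla(\beta_{1l}b_{1l})\cdot(B_{k_1}e^{2i\phi_l}+B_{-k_1}e^{-2i\phi_l})$, $\phi_l=\lambda_1 2^{|l|}k_1\cdot(x-\tfrac{l}{\mu_1}t)$, and $K_{12}$ the corresponding double sum over $l\ne l'$ of $\nabla(b_{1l}\beta_{1l'})\cdot B_{\pm k_1}$ against the four exponentials $e^{i\lambda_1(\pm 2^{|l|}\pm 2^{|l'|})k_1\cdot x}$ multiplied by the purely time-dependent phases $e^{\pm ig_{1,l,l'}(t)}$, $e^{\pm i\overline{g}_{1,l,l'}(t)}$ (which factor out of the spatial operator $\mathcal{G}$ at no cost). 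For $K_{11}$ the frequency is $\ge2\lambda_1|k_1|$ and $\|\nabla(\beta_{1l}b_{1l})\|_0\le C_1\mu_1$ by Lemma~\ref{l:estimate amp} and \eqref{i:inequality 1}, so the estimate gives $C_1\mu_1\lambda_1^{\alpha-1}$ per summand and, after summing over the finitely many $l$ with $|l|\le C_1\mu_1$ (the only ones on which $\alpha_l(\mu_1v_0)\not\equiv0$), the bound $C_1\mu_1^3\lambda_1^{\alpha-1}$ of the Lemma.

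The step I expect to be the main obstacle is $K_{12}$, where one must check that every exponential genuinely oscillates on the support where its amplitude $\nabla(b_{1l}\beta_{1l'})$ is non-zero, i.e.\ that $\pm 2^{|l|}\pm 2^{|l'|}\ne0$ whenever ${\rm supp}\,\alpha_l\cap {\rm supp}\,\alpha_{l'}\ne\emptyset$ (equivalently $|l-l'|<2c_2$). The $++$ and $--$ combinations are $\ge2$ and harmless; the delicate one is $2^{|l|}-2^{|l'|}$, and this is precisely the point where the separation of the frequencies $\lambda_1 2^{|l|}k_1$ attached to the various interacting modes must be used: one needs two interacting modes $l\ne l'$ to carry quantitatively separated weights, so that $\lambda_1|k_1|\,\bigl|2^{|l|}-2^{|l'|}\bigr|\gtrsim\lambda_1\mu_1^{-1}$ (recall $|l|,|l'|\le C_1\mu_1$). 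With $\|\nabla(b_{1l}\beta_{1l'})\|_0\le C_1\mu_1$, the estimate then still leaves a positive power of $\lambda_1$ in the denominator, and summing over the finitely many relevant pairs gives $\|\mathcal{G}(K_{12})\|_\alpha\le C_1\mu_1^3\lambda_1^{\alpha-1}$.

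Finally, for $\mathcal{G}(w_1\cdot\nabla\theta_0)$ no such resonance occurs: from the construction $w_1=\sum_l(B_{1lk_1}e^{i\phi_l}+B_{-1lk_1}e^{-i\phi_l})$, hence $w_1\cdot\nabla\theta_0=\sum_l\big((B_{1lk_1}\cdot\nabla\theta_0)e^{i\phi_l}+(B_{-1lk_1}\cdot\nabla\theta_0)e^{-i\phi_l}\big)$ is a single sum whose amplitudes satisfy $\|B_{\pm 1lk_1}\cdot\nabla\theta_0\|_r\le C_1\mu_1^r$ (using $\|B_{\pm 1lk_1}\|_r\le C_1\mu_1^r$, $\mu_1\ll\lambda_1$ and that $\theta_0$ is a fixed smooth function) and whose phases have frequency $\lambda_1 2^{|l|}|k_1|\ge\lambda_1|k_1|$. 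The Appendix~A estimate gives $C_1\lambda_1^{\alpha-1}$ per term, and summing over $|l|\le C_1\mu_1$ yields $\|\mathcal{G}(w_1\cdot\nabla\theta_0)\|_\alpha\le C_1\mu_1^3\lambda_1^{\alpha-1}$. Adding the two contributions with the triangle inequality proves the Lemma; apart from the non-degeneracy of the $K_{12}$-frequencies, the whole argument is the routine combination of \eqref{i:inequality 1}--\eqref{i:inequality 2} with the mapping properties of $\mathcal{G}$ from Appendix~A.
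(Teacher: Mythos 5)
Your proposal follows the same route as the paper's proof: the same splitting $\hbox{div}K_1=K_{11}+K_{12}$ into the diagonal part (frequency $2\lambda_1 2^{|l|}k_1$) and the off-diagonal part $l\neq l'$, the same use of $k_1\cdot B_{\pm k_1}=0$ so that the divergence only falls on the slow amplitudes, and the same term-by-term application of the stationary-phase bounds \eqref{p:R G} together with Lemma \ref{l:estimate amp}. You additionally treat $\mathcal{G}(w_1\cdot\nabla\theta_0)$ explicitly (the paper's proof is silent on this summand even though it appears in the statement); your handling of it is correct and routine.

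The one substantive issue is the resonance in $K_{12}$ that you yourself single out as the main obstacle. You are right to flag it, but the resolution you propose does not hold as stated: for $l\neq l'$ with $|l|=|l'|$ (e.g.\ $l=(1,0,0)$, $l'=(0,1,0)$) one has $2^{|l|}=2^{|l'|}$, while $\mathrm{supp}\,\alpha_l\cap\mathrm{supp}\,\alpha_{l'}\neq\emptyset$ since $|l-l'|=\sqrt{2}<2c_2$; hence the combination $2^{|l|}-2^{|l'|}$ vanishes on a pair whose amplitude $b_{1l}\beta_{1l'}$ need not vanish, the claimed lower bound $\lambda_1|k_1|\,|2^{|l|}-2^{|l'|}|\gtrsim\lambda_1\mu_1^{-1}$ fails, and $\mathcal{G}$ gains no power of $\lambda_1$ on that term. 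You should be aware that the paper's own proof has exactly the same unaddressed point: it applies \eqref{p:R G} to all four exponentials of $K_{12}$, including the one with spatial frequency $(2^{|l|}-2^{|l'|})k_1$, which requires that frequency to be nonzero. So your proposal reproduces the paper's argument faithfully and even isolates its weakest step, but neither your sketch nor the paper closes it; doing so would require the weights attached to the modes to be injective in $l$ (not merely in $|l|$), or a separate argument showing that the finitely many resonant pairs contribute an admissible error.
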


\begin{Lemma}[The transportation Part]\label{e:trans}
\begin{align}
\Big\|\mathcal{G}\Big(\partial_t\chi_{1}+\sum_{l\in Z^3}\Big(\frac{l}{\mu_{1}}\cdot\nabla\Big)\chi_{1l}\Big)\Big\|_\alpha\leq C_1\frac{\mu_1^2}{\lambda_1^{1-\alpha}}.
\end{align}
\begin{proof}
From (4.48), (4.49) and the definition of $\chi_1$, we have
\begin{align}
&\partial_t\chi_{1}+\sum_{l\in Z^3}\Big(\frac{l}{\mu_{1}}\cdot\nabla\Big)\chi_{1l}\nonumber\\
=&\sum\limits_{l\in Z^3}\Big(\partial_t+\frac{l}{\mu_1}\cdot\nabla\Big)\beta_{1l}\Big(e^{i\lambda_1 2^{|l|} k_1\cdot (x-\frac{l}{\mu_1}t)}+e^{-i\lambda_1 2^{|l|} k_1\cdot (x-\frac{l}{\mu_1}t)}\Big).\nonumber
\end{align}
By Lemma \ref{l:estimate amp} and the property (\ref{p:R G}) of $\mathcal{G}$, we have
\begin{align}
\Big\|\mathcal{G}\Big(\partial_t\chi_{1}+\sum_{l\in Z^3}\Big(\frac{l}{\mu_{1}}\cdot\nabla\Big)\chi_{1l}\Big)\Big\|_\alpha\leq  \sum\limits_{|l| C_1\mu_1}C_1\frac{\mu_1}{\lambda_1^{1-\alpha}}
\leq C_1\frac{\mu_1^2}{\lambda_1^{1-\alpha}}.\nonumber
\end{align}

\end{proof}
\end{Lemma}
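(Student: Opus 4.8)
The plan is to exploit that every oscillatory factor $e^{\pm i\lambda_1 2^{|l|}k_1\cdot(x-\frac{l}{\mu_1}t)}$ occurring in $\chi_{1l}$ is a plane wave transported along the constant velocity $\frac{l}{\mu_1}$, hence is annihilated by $\partial_t+\frac{l}{\mu_1}\cdot\nabla$. So the first, purely algebraic, step is to write $\chi_1=\sum_{l}\chi_{1l}$ and, using the Leibniz rule together with $(\partial_t+\frac{l}{\mu_1}\cdot\nabla)e^{\pm i\lambda_1 2^{|l|}k_1\cdot(x-\frac{l}{\mu_1}t)}=0$, to obtain
\[\partial_t\chi_1+\sum_{l\in Z^3}\Big(\frac{l}{\mu_1}\cdot\nabla\Big)\chi_{1l}=\sum_{l\in Z^3}\Big(\partial_t+\frac{l}{\mu_1}\cdot\nabla\Big)\beta_{1l}\,\Big(e^{i\lambda_1 2^{|l|}k_1\cdot(x-\frac{l}{\mu_1}t)}+e^{-i\lambda_1 2^{|l|}k_1\cdot(x-\frac{l}{\mu_1}t)}\Big).\]
Only finitely many $l$ enter the sum, namely those with $|l|\le C_1\mu_1$, since by (\ref{d:b1l}) the function $\beta_{1l}$ is supported where $\alpha_l(\mu_1 v_0)\neq 0$ and $\|v_0\|_0$ is finite.

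Second, I would apply term by term the $\mathcal{G}$-analogue of the stationary-phase estimate (\ref{e:R b1l}) furnished by the mapping properties of $\mathcal{G}$ in Appendix~A (property (\ref{p:R G})). Since $\mathcal{G}=\nabla\Delta^{-1}$ is an operator of order $-1$ acting only in the space variables, for an $x$-amplitude $a$, a frequency $\lambda=\lambda_1 2^{|l|}\ge\lambda_1$, a fixed large $m$, and $1\ll\mu_1\ll\lambda_1$ one has
\[\big\|\mathcal{G}\big(a\,e^{i\lambda k_1\cdot x}\big)\big\|_\alpha\le C_m\Big(\frac{\|a\|_0}{\lambda^{1-\alpha}}+\frac{[a]_m}{\lambda^{m-\alpha}}+\frac{[a]_{m+\alpha}}{\lambda^{m}}\Big).\]
The time variable and the harmless factor $e^{\mp i\lambda_1 2^{|l|}k_1\cdot\frac{l}{\mu_1}t}$ are constant in $x$, so they play no role — this is treated exactly as in Lemma~\ref{e:transport estimate} and Lemma~\ref{e:osci}, using also that $\mathcal{G}$ annihilates functions of $t$ alone. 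Choosing $a=(\partial_t+\frac{l}{\mu_1}\cdot\nabla)\beta_{1l}$ and invoking Lemma~\ref{l:estimate amp}, which yields $[a]_r\le C_1\mu_1^{r+1}$ uniformly in $l$ with $|l|\le C_1\mu_1$, the leading term is of size $C_1\mu_1/\lambda_1^{1-\alpha}$ and the remaining terms are strictly lower order because $\mu_1\ll\lambda_1$ and $m$ is large; hence each summand is bounded by $C_1\mu_1/\lambda_1^{1-\alpha}$.

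Finally, summing over the contributing indices $l$ (polynomially many in $\mu_1$) gives the claimed bound $C_1\mu_1^{2}/\lambda_1^{1-\alpha}$. I do not expect a genuine obstacle here: the lemma is simply the $\mathcal{G}$-counterpart of the transportation estimate Lemma~\ref{e:transport estimate} for $\delta\mathring{R}_{01}$, and the only points that need attention are (i) recording that $\mathcal{G}$ obeys the same commutator/decay estimate against spatial oscillations as $\mathcal{R}$ does in (\ref{e:R b1l}) — which is precisely property (\ref{p:R G}) — and (ii) the bookkeeping of the $2^{|l|}$ factors, which only help, since they raise the effective frequency from $\lambda_1$ to $\lambda_1 2^{|l|}$ while leaving the amplitude bounds of Lemma~\ref{l:estimate amp} unchanged.
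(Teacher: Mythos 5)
Your proposal is correct and follows essentially the same route as the paper: annihilate the phases with the transport derivative $\partial_t+\frac{l}{\mu_1}\cdot\nabla$, apply the stationary-phase property (\ref{p:R G}) of $\mathcal{G}$ to each term with amplitude $(\partial_t+\frac{l}{\mu_1}\cdot\nabla)\beta_{1l}$ controlled by Lemma \ref{l:estimate amp}, and sum over the contributing indices $|l|\leq C_1\mu_1$ to pick up the extra factor of $\mu_1$.
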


\begin{Lemma}[The error Part]\label{e:est 1}
\begin{align}
\Big\|w_{1oc}\chi_1+\sum_{l\in Z^3}\Big(v_0-\frac{l}{\mu_1}\Big)\chi_{1l}\Big\|_0\leq C_1\Big(\frac{\mu_1}{\lambda_1}+\frac{1}{\mu_1}\Big).
\end{align}
\begin{proof}
First, from  Lemma \ref{e: estimate correction}, we have
\begin{align}
\|w_{1oc}\chi_1\|_0\leq C_1\frac{\mu_1}{\lambda_1},\nonumber
\end{align}

\begin{align}
\sum_{l\in Z^3}\Big(v_0-\frac{l}{\mu_1}\Big)\chi_{1l}\nonumber
=\sum\limits_{l\in Z^3}\Big(v_0-\frac{l}{\mu_1}\Big)\beta_{1l}\Big{(}e^{i\lambda_1 2^{|l|} k_1\cdot (x-\frac{l}{\mu_1}t)}+e^{-i\lambda_1 2^{|l|} k_1\cdot (x-\frac{l}{\mu_1}t)}\Big{)}.\nonumber
\end{align}
Obviously, $\beta_{1l}(x,t)\neq0$ if and only if $|\mu_1 v_0-l|\leq1,$
therefore,
\begin{align}
\Big\|\sum_{l\in Z^3}\Big(v_0-\frac{l}{\mu_1}\Big)\chi_{1l}\Big\|_0\leq \frac{C_1}{\mu_1}.\nonumber
\end{align}

\end{proof}
\end{Lemma}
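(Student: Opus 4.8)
\emph{Proof proposal.} The plan is to split the left-hand side into the two summands $w_{1oc}\chi_1$ and $\sum_{l\in Z^3}\big(v_0-\tfrac{l}{\mu_1}\big)\chi_{1l}$ and to bound each one using estimates already in hand; no new construction is needed.

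For the first summand I would combine the correction estimate of Lemma~\ref{e: estimate correction}, which at $\alpha=0$ reads $\|w_{1oc}\|_0\le C_1\tfrac{\mu_1}{\lambda_1}$, with the uniform bound $\|\chi_1\|_0\le \tfrac{M\sqrt\delta}{2L}\le C_1$ from \eqref{e:bound x}; the product inequality \eqref{i:inequality 1} at $r=0$ then gives $\|w_{1oc}\chi_1\|_0\le C_1\tfrac{\mu_1}{\lambda_1}$.

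The second summand is the only part that requires care, and the point is a localization coming from the partition of unity. Writing $\chi_{1l}=\beta_{1l}\big(e^{i\lambda_1 2^{|l|}k_1\cdot(x-\frac{l}{\mu_1}t)}+e^{-i\lambda_1 2^{|l|}k_1\cdot(x-\frac{l}{\mu_1}t)}\big)$ and recalling that $\beta_{1l}$ carries the factor $\alpha_l(\mu_1 v_0)$, one sees that $\chi_{1l}(x,t)\neq 0$ forces $\mu_1 v_0(x,t)\in\hbox{supp}\,\alpha_l\subseteq B_{c_2}(l)$, hence $|v_0(x,t)-\tfrac{l}{\mu_1}|\le\tfrac{1}{\mu_1}$ on $\hbox{supp}\,\chi_{1l}$; thus the vector factor of the $l$-th term has size $O(1/\mu_1)$ there, while $|\chi_{1l}|\le 2\|\beta_{1l}\|_0\le C_1$ by \eqref{b:bound 3}. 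Since by \eqref{p:unity} only a bounded, dimension-dependent number of the $\alpha_l$, hence of the $\chi_{1l}$, are nonzero at any single $(x,t)$, summing over $l\in Z^3$ costs only a constant and yields $\big\|\sum_{l\in Z^3}(v_0-\tfrac{l}{\mu_1})\chi_{1l}\big\|_0\le \tfrac{C_1}{\mu_1}$.

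Adding the two estimates gives the asserted bound $C_1\big(\tfrac{\mu_1}{\lambda_1}+\tfrac{1}{\mu_1}\big)$. I do not anticipate any real obstacle: the first term is immediate from the correction estimate, the second from the support localization forced by the cut-offs $\alpha_l(\mu_1 v_0)$, and the only detail worth spelling out is the bounded-overlap count in the last step. This makes Lemma~\ref{e:est 1} the easiest of the three error estimates entering $\delta f_{01}$.
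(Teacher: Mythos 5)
Your proposal is correct and follows essentially the same argument as the paper: the first summand is handled by the correction estimate of Lemma~\ref{e: estimate correction} together with the uniform bound on $\chi_1$, and the second by the support localization $|\mu_1 v_0-l|\leq 1$ forced by the cut-off $\alpha_l(\mu_1 v_0)$ inside $\beta_{1l}$, combined with the bounded overlap of the partition of unity. The paper states these steps more tersely, but no new idea is present in either direction.
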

Then we conclude that
\begin{align}
\|\delta f_{01}\|_0\leq C_1\Big(\frac{\mu_1^3}{\lambda_1^{1-\alpha}}+\frac{1}{\mu_1}\Big).
\end{align}
Furthermore, from (\ref{e:bound x}), we have
\begin{align}\label{e:difference}
 \|\theta_{01}-\theta_{0}\|_0\leq& \frac{M\sqrt{\delta}}{2L}+C_1\frac{\mu_1^2}{\lambda_1}.
\end{align}
Where we used the fact
\begin{align}
\sup_t\Big|\int_{T^3}\chi_1dx\Big|
\leq \sum\limits_{|l|\leq C_1\mu_1}C_1\frac{\mu_1}{\lambda_1}
\leq C_1\frac{\mu_1^2}{\lambda_1}.\nonumber
\end{align}

In conclusion, we have
\begin{align}
 &R_{01}:=-\rho(t)\sum\limits_{i=2}^{L}\Big(\gamma_{k_i}\Big(\frac{R_0}{\rho(t)}\Big)\Big)^2\Big(Id-
 \frac{k_i}{|k_i|}\otimes\frac{k_i}{|k_i|}\Big)+\delta \mathring{R}_{01},\nonumber\\
    &f_{01}:=\sum\limits_{i=2}^{3}g_{k_i}(f_0(x,t))A_{k_i}+\delta f_{01},\nonumber
\end{align}
and the estimates
\begin{align}
 \|v_{01}-v_{0}\|_0\leq& \frac{M\sqrt{\delta}}{2L}+C_1\frac{\mu_1}{\lambda_1} ,\nonumber\\
\|p_{01}-p_{0}\|_0\leq& C_1\Big(\frac{1}{\mu_1}+\frac{\mu_1}{\lambda_1} \Big),\nonumber\\
 \|\theta_{01}-\theta_{0}\|_0\leq& \frac{M\sqrt{\delta}}{2L}+C_1\frac{\mu_1^2}{\lambda_1} ,\nonumber\\
\|\delta\mathring{R_{01}}\|_0\leq& C_1\Big(\frac{\mu_1^3}{\lambda_1^{1-\alpha}}+\frac{1}{\mu_1}\Big),\qquad \forall\alpha\in(0,1),\nonumber\\
\|\delta f_{01}\|_0\leq& C_1\Big(\frac{\mu_1^3}{\lambda_1^{1-\alpha}}+\frac{1}{\mu_1}\Big).\qquad \forall\alpha\in(0,1).\nonumber
\end{align}
Thus we complete the first step.

\section{Constructions of $v_{0n}, p_{0n}, \theta_{0n}, R_{0n}, f_{0n}$, $2\leq n\leq L$}

In this section, we assume $2\leq n \leq L$ and we will construct $v_{0n}, p_{0n}, \theta_{0n}, R_{0n}, f_{0n}$ by inductions.

 Suppose that for $1 \leq m < n\leq L$, we have constructed $v_{0m},~p_{0m},~\theta_{0m},~R_{0m},~f_{0m}$ and they are the solutions of
  the system (\ref{d:boussinesq reynold}). Furthermore, we have
 \begin{align}\label{f:R0m f0m}
 &R_{0m}:=-\bar{\rho}(t)\sum\limits_{i=m+1}^{L}\Big(\gamma_{k_i}\Big(\frac{R_0}{\bar{\rho}(t)}\Big)\Big)^2\Big(Id-\frac{k_i}{|k_i|}\otimes\frac{k_i}{|k_i|}\Big)
 +\sum\limits_{i=1}^{m}\delta \mathring{R}_{0i},\nonumber\\
    &f_{0m}:=
    \left \{
    \begin {array}{ll}
    \sum\limits_{i=m+1}^{3}g_{k_i}(f_0(x,t))A_{k_i}+\sum\limits_{i=1}^{m}\delta f_{0i}, \quad m=1,2\\
   \sum\limits_{i=1}^{m}\delta f_{0i},\qquad m\geq3,
    \end{array}
    \right.
 \end{align}
 and
 \begin{align}\label{e:esti 0m}
 \|v_{0m}-v_{0(m-1)}\|_0\leq& \frac{M\sqrt{\delta}}{2L}+C_m\frac{\mu_m}{\lambda_m} ,\nonumber\\
\|p_{0m}-p_{0(m-1)}\|_0\leq& C_m\Big(\frac{1}{\mu_m}+\frac{\mu_m}{\lambda_m} \Big),\nonumber\\
 \|\theta_{0m}-\theta_{0(m-1)}\|_0\leq& \frac{M\sqrt{\delta}}{2L}+C_m\frac{\mu_m^2}{\lambda_m} ,\nonumber\\
\|\delta\mathring{R}_{0m}\|_0\leq& C_m\Big(\frac{\mu_{m}^3}{\lambda_{m}^{1-\alpha}}+\frac{1}{\mu_{m}}\Big),\nonumber\\
\|\delta f_{0m}\|_0\leq& C_m\Big(\frac{\mu_m^3}{\lambda_{m}^{1-\alpha}}+\frac{1}{\mu_{m}}\Big).\qquad \forall\alpha\in(0,1).
\end{align}
where parameters $\mu_m$ and $\lambda_m$  will be chosen sufficiently large, which satisfies
\begin{align}
\lambda_m, \mu_m, \frac{\lambda_m}{\mu_m}\in N,
\end{align}
and the constant $C_m$ denotes a constant which  depends on $v_{0(m-1)},\mathring{R_0},e(t)$ as well as $\lambda_0,\alpha,\delta$, but doesn't depend on $\mu_m,\lambda_m$, and can
  change from line to line. \\

Next, we construct n-th steps by induction.

\subsection{The perturbations $w_{n}$ and $\chi_n$}

   \indent

   For $2\leq n \leq L$ and any $l\in Z^3$, we denote $ b_{nl}, B_{k_n}$ by
   \begin{align}
    b_{nl}(x,t):=&\sqrt{\rho(t)}\alpha_l(\mu_n v_{0(n-1)})\gamma_{k_n}\Big{(}\frac{R_0(x,t)}{\rho(t)}\Big{)},\nonumber\\
    B_{k_n}:=&A_{k_n}+i\frac{k_n}{|k_n|}\times A_{k_n}.\nonumber
   \end{align}
  And define $l$-perturbations
   \begin{align}
   w_{nol}:=b_{nl}(x,t)\Big{(}B_{k_n}e^{i\lambda_n 2^{|l|} k_n\cdot (x-\frac{l}{\mu_n}t)}+B_{-k_n}e^{-i\lambda_n 2^{|l|} k_n\cdot (x-\frac{l}{\mu_n}t)}\Big{)},\nonumber
   \end{align}
  where the parameters $\mu_n$ and $\lambda_n$  will be chosen sufficiently large, which satisfies the following conditions
\begin{align}
\lambda_n, \mu_n, \frac{\lambda_n}{\mu_n}\in N.
\end{align}
Then, we define the n-th perturbation
   \begin{align}
    w_{no}:=\sum_{l\in Z^3}w_{nol}.\nonumber
   \end{align}
   Similar as in the first step, the $l$-corrections is denoted by
   \begin{align}
   w_{nocl}:=&\frac{1}{\lambda_n\lambda_{0}}\Big{(}\frac{\nabla b_{nl}(x,t)\times B_{k_n}}{2^{|l|}}e^{i\lambda_n 2^{|l|} k_n\cdot (x-\frac{l}{\mu_n}t)}\nonumber+\frac{\nabla b_{nl}(x,t)\times B_{-{k_n}}}{2^{|l|}}e^{-i\lambda_n 2^{|l|} k_n\cdot (x-\frac{l}{\mu_n}t)}\Big{)},\nonumber
   \end{align}
   and the n-th correction
   \begin{align}
    w_{noc}:=\sum_{l\in Z^3}w_{nocl}.\nonumber
    \end{align}
  Finally, we define n-th  perturbation
   \begin{align*}
    w_n:=&w_{no}+w_{noc}.
   \end{align*}
  If we denote $w_{nl}$ by
   \begin{align}
   w_{nl}=&\frac{1}{\lambda_n\lambda_{0}}{\rm curl}\Big(\frac{b_{nl}(x,t)B_{k_n}}{2^{|l|}}e^{i\lambda_n 10^{|l|} k_n\cdot (x-\frac{l}{\mu_n}t)}\nonumber+\frac{b_{nl}(x,t)B_{-{k_n}}}{10^{|l|}}e^{-i\lambda_n 2^{|l|}k_n\cdot (x-\frac{l}{\mu_n}t)}\Big{)}\nonumber
   \end{align}
   then
   $$w_n=\sum_{l\in Z^3}w_{nl},\qquad \hbox{div}w_{nl}=0,\qquad \hbox{div}w_n=0, $$
   and they are all real vector functions.
Moreover,
\begin{align}\label{b:bound 2}
\|w_{n0}\|_0\leq\frac{M\sqrt{\delta}}{2L}.
\end{align}
We set
\begin{align}
B_{nlk_n}:=&b_{nl}(x,t)B_{k_n}+\frac{1}{\lambda_n\lambda_{0}}\frac{\nabla b_{nl}(x,t)\times B_{k_n}}{2^{|l|}},\nonumber\\
B_{-nlk_n}:=&b_{nl}(x,t)B_{-k_n}+\frac{1}{\lambda_n\lambda_{0}}\frac{\nabla b_{nl}(x,t)\times B_{-k_n}}{2^{|l|}},\nonumber
\end{align}
then
\begin{align}
w_{nl}=B_{nlk_n}e^{i\lambda_n 2^{|l|} k_n\cdot (x-\frac{l}{\mu_n}t)}+B_{-nlk_n}e^{-i\lambda_n 2^{|l|} k_n\cdot (x-\frac{l}{\mu_n}t)}.\nonumber
\end{align}
Then we complete the construction of n-th perturbation $w_n$.\\
To construct $\chi_n$, we denote $\beta_{nl}$ by
   \begin{align}
    \beta_{nl}(x,t):=\left \{
    \begin {array}{ll}
    \frac{\alpha_{l}(\mu_nv_{0(n-1)})}{2\sqrt{\rho(t)}}\frac{g_{k_n}(-f_0(x,t))}{\gamma_{k_n}\big(\frac{R(x,t)}{\rho(t)}\big)}, \qquad n=2,3\\
    0,\qquad n\geq4,
    \end{array}
    \right.
    \end{align}
    then define $l$-perturbations
    \begin{align}
    \chi_{nl}(x,t):=\left \{
    \begin {array}{ll}
    \beta_{nl}(x,t)\Big(e^{i\lambda_n 2^{|l|} k_n\cdot (x-\frac{l}{\mu_n}t)}+e^{-i\lambda_n 2^{|l|} k_n\cdot (x-\frac{l}{\mu_n}t)}\Big),\quad n=2,3\\
    0,\qquad n\geq4,
    \end{array}
    \right.
    \end{align}
   and the n-th perturbation
   \begin{align}
    \chi_n(x,t):=\sum\limits_{l\in Z^3}\chi_{nl}(x,t)
    =\left \{
    \begin {array}{ll}
    \sum\limits_{l\in Z^3}\chi_{nl},\quad n=2,3\\
    0,\qquad n\geq4.
    \end{array}
    \right.
   \end{align}
   Moreover, since $\Big\|\frac{R_0(x,t)}{\rho(t)}\Big\|_0\leq\frac{r_0}{2}$,  there exist two positive constants $c_{1n}$ and $c_{2n}$ such that $$c_{1n}\leq\gamma_{k_n}\Big(\frac{R_0(x,t)}{\rho(t)}\Big)\leq c_{2n}.$$
  Then, from (7.82) the definition of $\beta_{nl}$, we have
 \begin{align}
  \|\beta_{nl}\|_0\leq\frac{M\sqrt{\delta}}{10L},
\end{align}
\begin{align}
  \|\chi_{n}\|_0\leq\frac{M\sqrt{\delta}}{2L}.
\end{align}

\subsection{The constructions of  $v_{0n}$, $p_{0n}$, $\theta_{0n}$, $f_{0n}$, $\mathring{R}_{0n}$ }

\indent

First, we denote $M_n$ by
   \begin{align}
   M_n=&\sum\limits_{l\in Z^3}b^2_{nl}(x,t)\Big{(}B_{k_n}\otimes B_{k_n}e^{2i\lambda_n 2^{|l|} k_n\cdot (x-\frac{l}{\mu_n}t)}\nonumber+B_{-k_n}\otimes B_{-k_n}
   e^{-2i\lambda_n 2^{|l|} k_n\cdot (x-\frac{l}{\mu_n}t)}\Big{)}\nonumber\\
   &+\sum\limits_{l,l'\in Z^3 ,l\neq l'}w_{nol}\otimes w_{nol'}(x,t).
   \end{align}
 and
$N_n,K_n$ by
 \begin{align}
   N_n=&\sum_{l\in Z^3}\Big[w_{nl}\otimes \Big(v_{0(n-1)}-\frac{l}{\mu_{n}}\Big)
   +\Big(v_{0(n-1)}-\frac{l}{\mu_{n}}\Big{)}\otimes w_{nl}\Big]\nonumber
   \end{align}
   and
   \begin{align}
   K_n=
    \left \{
    \begin {array}{ll}
    \sum\limits_{l\in Z^3}\beta_{nl}(x,t)b_{nl}(x,t)\Big(B_{k_n}e^{2i\lambda_n 2^{|l|} k_n\cdot (x-\frac{l}{\mu_n}t)}
   + B_{-k_n}e^{-2i\lambda_n 2^{|l|} k_n\cdot (x-\frac{l}{\mu_n}t)}\Big{)}\\ \qquad +\sum\limits_{l,l'\in Z^3 ,l\neq l'}w_{nol}\chi_{nl'},\quad n=2,3\\ \\
   0,\quad n\geq4.
    \end{array}
    \right.
 \end{align}
 Notice that $N_{n}$ is a symmetric matrix. Then we define
   \begin{align}
    &v_{0n}:=v_{0(n-1)}+w_n,\nonumber\\
    &p_{0n}:=p_{0(n-1)}-\frac{2w_{no}\cdot w_{noc}+|w_{noc}|^2}{3}-tr(N_n),\nonumber\\ \nonumber\\
    &\theta_{0n}:=
    \left \{
    \begin {array}{ll}
    \theta_{0(n-1)}+\chi_n-\fint_{T^{3}}{\chi_n}dx, \quad n=2,3\nonumber\\
    \theta_{03}, \quad n\geq 4 \nonumber
    \end{array}
    \right.\\ \nonumber\\
     &R_{0n}:=R_{0(n-1)}(x,t)+2\sum\limits_{l\in Z^3}b^2_{nl}(x,t)Re(B_{k_n}\otimes B_{-{k_n}})+\delta \mathring{R}_{0n},\nonumber\\
    &f_{0n}:
    =\left \{
    \begin {array}{ll}
    f_{0(n-1)}+2\sum\limits_{l\in Z^3}\beta_{nl}b_{nl}(x,t)A_{k_n}+\delta f_{0n},\quad n=2,3\\
     f_{0(n-1)}+\delta f_{0n}, \quad n\geq 4.
    \end{array}
    \right.\label{d:f0n}
   \end{align}
   Where
   \begin{align}
   \delta \mathring{R}_{0n}:=&\mathcal{R}(\hbox{div}M_n)+N_n-tr(N_n)Id+\mathcal{R}\Big\{\partial_tw_{noc}+\partial_{t}w_{no}\nonumber\\
   &+\hbox{div}\Big[\sum_{l\in Z^3}\Big(w_{nl}\otimes\frac{l}{\mu_{n}}+\frac{l}{\mu_{n}}\otimes w_{nl}\Big)\Big]\Big\}
   -\mathcal{R}\Big(\Big(\chi_n-\fint_{T^{3}}{\chi_n}dx\Big)e_3\Big)\nonumber\\
   &+(w_{no}\otimes w_{noc}+w_{noc}\otimes w_{no}+w_{noc}\otimes w_{noc})-\frac{2w_{no}\cdot w_{noc}+|w_{noc}|^2}{3}Id,\nonumber
   \end{align}

   and
   \begin{align}
   \delta f_{0n}
   :=\left \{
    \begin {array}{ll}
      \mathcal{G}(\hbox{div}K_n)
   +\mathcal{G}(w_n\cdot\nabla\theta_{0(n-1)})
   +\mathcal{G}\Big(\partial_t\chi_{n}+\sum_{l\in Z^3}\Big(\frac{l}{\mu_{n}}\cdot\nabla\Big)\chi_{nl}\Big)\nonumber\\
   +\sum_{l\in Z^3}\Big(v_{0(n-1)}-\frac{l}{\mu_n}\Big)\chi_{nl}+w_{noc}\chi_n, \quad n=2,3\\ \\
   \mathcal{G}(w_{n}\cdot\nabla\theta_{03}),\quad n\geq4.
   \end{array}
    \right.
  \end{align}
   From Lemma \ref{l:reyn}, we know that $\delta \mathring{R}_{0n}$ is a symmetric and trace-free matrix. And
   \begin{align*}
    \hbox{div}v_{0n}=\hbox{div}v_{0(n-1)}+\hbox{div}w_{n}=0
   \end{align*}
  By definition,  $w_{no}$, $w_{noc}$, $\delta R_{0n}$ together with $v_{0n}$, $p_{0n}$ and  $v_{0(n-1)}$, $p_{0(n-1)}$, $
    \theta_{0(n-1)}$, $R_{0(n-1)}$, $f_{0(n-1)}$ are solutions of the system (2.2), we have

   \[\begin{aligned}
    \hbox{div}R_{0n}=&\hbox{div}R_{0(n-1)}(x,t)+\partial_tw_n-\nabla(trN_n)
    -\nabla\Big(\frac{2w_{no}\cdot w_{noc}+|w_{noc}|^2}{3}\Big)-
    \Big(\chi_n-\fint_{T^{3}}{\chi_n}dx\Big)e_3\nonumber\\
    &+\hbox{div}(w_{no}\otimes w_{no}+w_{n}\otimes v_{0(n-1)}+v_{0(n-1)}\otimes w_{n}
    +w_{no}\otimes w_{noc}+w_{noc}\otimes w_{no}+w_{noc}\otimes w_{noc})\nonumber\\
    =&\partial_tv_{0(n-1)}+\hbox{div}(v_{0(n-1)}\otimes v_{0(n-1)})+\nabla p_{0(n-1)}-\theta_{0(n-1)}e_3+\partial_tw_n
    -\nabla(trN_n)\nonumber\\
    &-\nabla\Big(\frac{2w_{no}\cdot w_{noc}+|w_{noc}|^2}{3}\Big)-
    \Big(\chi_n-\fint_{T^{3}}{\chi_n}dx\Big)e_3
    +\hbox{div}(w_{no}\otimes w_{no}+w_{n}\otimes v_{0(n-1)}\nonumber\\
    &+v_{0(n-1)}\otimes w_{n}
    +w_{no}\otimes w_{noc}+w_{noc}\otimes w_{no}+w_{noc}\otimes w_{noc})\nonumber\\
    =&\partial_tv_{0n}+\hbox{div}(v_{0n}\otimes v_{0n})+\nabla p_{0n}-\theta_{0n}e_3.
    \end{aligned}\]

    Where we used $$\fint_{T^3}w_{n}(x,t)dx=0,$$
    and $$\hbox{div}(M_n)+\hbox{div}\Big(2\sum\limits_{l\in Z^3}b^2_{nl}(x,t)Re(B_{k_n}\otimes B_{-{k_n}})\Big)=\hbox{div}(w_{no}\otimes w_{no}).$$

  Furthermore, from the definition of $w_{no}, \chi_n$, we know that
  \[ \begin{aligned}
  f_{0n}
  =\left \{
    \begin {array}{ll}
    f_{0(n-1)}+2\sum\limits_{l\in Z^3}\beta_{nl}(x,t)b_{nl}(x,t)A_{k_n}+\mathcal{G}(\hbox{div}K_n)\nonumber\\
   +\mathcal{G}(w_n\cdot\nabla\theta_{0(n-1)})
   +\mathcal{G}\Big(\partial_t\chi_{n}+\sum_{l\in Z^3}\Big(\frac{l}{\mu_{n}}\cdot\nabla\Big)\chi_{nl}\Big)\nonumber\\
   +\sum_{l\in Z^3}\Big(v_{0(n-1)}-\frac{l}{\mu_n}\Big)\chi_{nl}+w_{noc}\chi_n,\quad n=2,3\\
    f_{0(n-1)}+\mathcal{G}(w_{n}\cdot\nabla\theta_3), \quad n\geq4
    \end{array}
    \right.
   \end{aligned}\]
 Thus, for $n=2,3$, we have
 \begin{align}
 \hbox{div}f_{0n}=&\hbox{div}f_{0(n-1)}+\partial_{t}\Big(\chi_n-\fint_{T^3}\chi_n\Big)
 +\hbox{div}(w_{no}\chi_n+w_{noc} \chi_n+v_{0(n-1)} \chi_n+w_n \theta_{0(n-1)})\nonumber\\
 =&\hbox{div}(v_{0(n-1)}\theta_{0(n-1)}+w_{no}\chi_n+w_{noc} \chi_n+v_{0(n-1)} \chi_n\nonumber\\
 &+w_n\theta_{0(n-1)})+\partial_t\Big(\theta_{0(n-1)}+\chi_n-\fint_{T^3}\chi_ndx\Big)-h\nonumber\\
 =&\partial_t\theta_{0n}+\hbox{div}(v_{0n}\theta_{0n})-h.\nonumber
 \end{align}
 For $n\geq4$, we have
 \begin{align}
 \hbox{div}f_{0n}=\hbox{div}f_{0(n-1)}+w_{n}\cdot\nabla\theta_{03}
 =&\partial_t\theta_{0(n-1)}+\hbox{div}(v_{0(n-1)}\theta_{0(n-1)}+w_{n}\theta_{0(n-1)})-h\nonumber\\
 =&\partial_t\theta_{0n}+\hbox{div}(v_{0n}\theta_{0n})-h,\nonumber
 \end{align}
 where $\theta_{0(n-1)}=\theta_{03}$ for $n\geq 4$.\\

 So the functions $v_{0n},~p_{0n},~\theta_{0n},~R_{0n},~f_{0n}$ solve the system (2.2).

\section{The representations}

\indent

In this section, we will calculate the forms of
$$R_{0(n-1)}(x,t)+2\sum\limits_{l\in Z^3}b^2_{nl}(x,t)Re(B_{k_n}\otimes B_{-{k_n}}),$$
and
$$f_{0(n-1)}+2\sum\limits_{l\in Z^3}\beta_{nl}(x,t)(x,t)b_{nl}(x,t)A_{k_n} ,\quad n=2,3.$$
As in (\ref{p:identity}), we have the following identity:
\begin{align}
2Re(B_{k_n}\otimes B_{-{k_n}})=Id-\frac{k_n}{|k_n|}\otimes\frac{k_n}{|k_n|}.\nonumber
\end{align}

\subsection{The representation of $R_{0(n-1)}(x,t)+2\sum\limits_{l\in Z^3}b^2_{nl}(x,t)Re(B_{k_n}\otimes B_{-{k_n}})$}

\indent

First, by the notation of $b_{nl}(x,t)$, we have
\begin{align}
2\sum\limits_{l\in Z^3}b^2_{nl}(x,t)Re(B_{k_n}\otimes B_{-{k_n}})
=&\bar{\rho}(t)\sum\limits_{l\in Z^3}\alpha_l^2(\mu_n v_{(0(n-1)})\gamma_{k_n}^2\Big{(}\frac{R_0(x,t)}{\bar{\rho}(t)}\Big{)}\Big(Id-
\frac{k_n}{|k_n|}\otimes\frac{k_n}{|k_n|}\Big)\nonumber\\
=&\bar{\rho}(t)\gamma_{k_n}^2\Big{(}\frac{R_0(x,t)}{\bar{\rho}(t)}\Big{)}\Big(Id-
\frac{k_n}{|k_n|}\otimes\frac{k_n}{|k_n|}\Big).\nonumber
\end{align}
Moreover, by (\ref{f:R0m f0m}),
\begin{align}
R_{0(n-1)}:=-\bar{\rho}(t)\sum\limits_{i=n}^{L}\Big(\gamma_{k_i}\Big(\frac{R_0}{\bar{\rho}(t)}\Big)\Big)^2\Big(Id-
\frac{k_i}{|k_i|}\otimes\frac{k_i}{|k_i|}\Big)
+\sum\limits_{i=1}^{n-1}\delta \mathring{R}_{0i}.\nonumber
\end{align}
Therefore
\begin{align}
&R_{0(n-1)}(x,t)+2\sum\limits_{l\in Z^3}b^2_{nl}(x,t)Re(B_{k_n}\otimes B_{-{k_n}})\nonumber\\
=&-\bar{\rho}(t)\sum\limits_{i=n+1}^{L}\Big(\gamma_{k_i}\Big(\frac{R_0}{\bar{\rho}(t)}\Big)\Big)^2\Big(Id-
\frac{k_i}{|k_i|}\otimes\frac{k_i}{|k_i|}\Big)+\sum\limits_{i=1}^{n-1}\delta \mathring{R}_{0i}.\nonumber
\end{align}
Meanwhile, we have
\begin{align}
R_{0n}=-\bar{\rho}(t)\sum\limits_{i=n+1}^{L}\Big(\gamma_{k_i}\Big(\frac{R_0}{\bar{\rho}(t)}\Big)\Big)^2\Big(Id-\frac{k_i}{|k_i|}\otimes\frac{k_i}{|k_i|}\Big)
+\sum\limits_{i=1}^{n}\delta \mathring{R}_{0i}.
\end{align}
In particular,
\begin{align}
R_{0L}=\sum\limits_{i=1}^{L}\delta R_{01}.\nonumber
\end{align}
In next section, we will prove that $\delta R_{0n}$ is small.\\

\subsection{The representation of $f_{0(n-1)}+2\sum\limits_{l\in Z^3}\beta_{nl}(x,t)b_{nl}(x,t)A_{k_n},n=2,3$}

\indent

From the definition of $b_{nl}(x,t)$ and $\beta_{nl}(x,t)$, we have
\begin{align}
2\sum\limits_{l\in Z^3}\beta_{nl}(x,t)b_{nl}(x,t)A_{k_n}\nonumber
=&\sum\limits_{l\in Z^3}\alpha_l^2(\mu_nv_{0(n-1)})g_{k_n}(-f_{0(n-1)}(x,t))A_{k_n}
=-g_{k_n}(f_{0(n-1)}(x,t))A_{k_n}.\nonumber
\end{align}
Where we used the fact that $g_{k_n}$ is a linear functional.
Moreover, by (\ref{f:R0m f0m}), for $n=2,3$,
\begin{align}
f_{0(n-1)}:=\sum\limits_{i=n}^{3}g_{k_i}(f_0(x,t))A_{k_i}+\sum\limits_{i=1}^{n-1}\delta f_{0i}.\nonumber
\end{align}
Therefore
\begin{align}
f_{0(n-1)}+2\sum\limits_{l\in Z^3}\beta_{nl}(x,t)b_{nl}(x,t)A_{k_n}\nonumber
=&\sum\limits_{i=n+1}^{3}g_{k_i}(f_0(x,t))A_{k_i}+\sum\limits_{i=1}^{n-1}\delta f_{0i}.\nonumber
\end{align}
And
\begin{align}
f_{0n}
=\sum\limits_{i=n+1}^{3}g_{k_i}(f_0(x,t))A_{k_i}+\sum\limits_{i=1}^{n}\delta f_{0i}.
\end{align}
In particular,
\begin{align}
f_{03}
=\sum\limits_{i=1}^{3}\delta f_{0i}.\nonumber
\end{align}
And
\begin{align}
f_{0n}
=\sum\limits_{i=1}^{n}\delta f_{0i},\nonumber
\end{align}
for $n\geq 3$.

 \section{Estimates on $\delta \mathring{R}_{0n}$ and $\delta f_{0n}$}

 \indent

 In this section, as before, $C_n$ denotes a constant which depends on $v_{0(n-1)},~\mathring{R_0},~e(t)$ as well as $\lambda_0,~\alpha,~\delta$, but doesn't depend on $\mu_n,~\lambda_n$, and can
  change from line to line.  Furthermore, we will set $1\ll\mu_n\ll\lambda_n$.

 First, we summarize some of the estimates of $b_{nl}$ and $\beta_{nl}$.
\begin{Lemma}\label{e: estimate bnl}
For any $|l|\leq C_n\mu_n$ and $r\geq0$,
\begin{align}
\|b_{nl}\|_r\leq C_n\mu_n^r,\nonumber\\
\|\beta_{nl}\|_r\leq C_n\mu_n^r,\nonumber\\
\|(\partial_t+\frac{l}{\mu_n}\cdot\nabla)b_{nl}\|_r\leq C_n\mu_n^{r+1},\nonumber\\
\|(\partial_t+\frac{l}{\mu_n}\cdot\nabla)b_{nl}\|_r\leq C_n\mu_n^{r+1}.\nonumber
\end{align}
\begin{proof}
The proof is similar to that of Lemma 6.1, we omit it here.
\end{proof}
\end{Lemma}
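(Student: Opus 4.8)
The plan is to run the proof of Lemma \ref{l:estimate amp} verbatim, with $v_0$ replaced by $v_{0(n-1)}$ and the index $1$ replaced by $n$; I only lay out the structure. First I would reduce to integer $r$ using the interpolation inequality (\ref{i:inequality 2}). Then I would recall that $b_{nl}=\sqrt{\bar{\rho}(t)}\,\alpha_l(\mu_n v_{0(n-1)})\,\gamma_{k_n}\big(R_0/\bar{\rho}(t)\big)$ and, for $n=2,3$, that $\beta_{nl}=\frac{\alpha_l(\mu_n v_{0(n-1)})}{2\sqrt{\bar{\rho}(t)}}\cdot\frac{g_{k_n}(-f_0)}{\gamma_{k_n}(R_0/\bar{\rho}(t))}$, while $\beta_{nl}\equiv0$ for $n\ge4$, so that the two bounds on $\beta_{nl}$ are trivial in that range. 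The observation to isolate is that every factor appearing here except $\alpha_l(\mu_n v_{0(n-1)})$ is a fixed smooth function on $T^3\times[0,1]$ whose $C^r$ norms are absorbed into $C_n$ by the convention fixed at the start of this section: $\bar{\rho}(t)$ is bounded above and below by positive constants (depending on $e$ and $\delta$), $R_0=\bar{\rho}(t)\,Id-\mathring{R}_0$ is smooth, and by (\ref{b:bound R 0}) the argument $R_0/\bar{\rho}(t)$ stays in $B_{r_0/2}(Id)$, which is compactly contained in the domain $B_{r_0}(Id)$ of the smooth function $\gamma_{k_n}$; hence $\gamma_{k_n}(R_0/\bar{\rho}(t))$ has $C_n$-bounded derivatives of every order. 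For $\beta_{nl}$ I would additionally use the lower bound $\gamma_{k_n}(R_0/\bar{\rho}(t))\ge c_{1n}>0$ recorded just above its definition, which makes $1/\gamma_{k_n}(R_0/\bar{\rho}(t))$ smooth with $C_n$-bounded derivatives, together with the fact that $g_{k_n}$ is a fixed linear functional, so that $g_{k_n}(-f_0)$ is a fixed smooth function.

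Granting this, the first two estimates follow from a Leibniz expansion: every term of $D^\beta b_{nl}$ or $D^\beta\beta_{nl}$ is a product of $C_n$-bounded factors times a derivative $D^{\beta'}\!\big[\alpha_l(\mu_n v_{0(n-1)})\big]$ with $|\beta'|\le r$, and by the chain rule each derivative landing on $\alpha_l(\mu_n v_{0(n-1)})$ produces one power of $\mu_n$, coming from $\nabla(\mu_n v_{0(n-1)})=\mu_n\nabla v_{0(n-1)}$, while leaving a composition $(D^j\alpha_l)(\mu_n v_{0(n-1)})$ that is bounded because $\alpha_l\in C_c^\infty$; thus $\big\|D^{\beta'}[\alpha_l(\mu_n v_{0(n-1)})]\big\|_0\le C_n\mu_n^{|\beta'|}$ and $\|b_{nl}\|_r,\ \|\beta_{nl}\|_r\le C_n\mu_n^{r}$. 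For the transport estimates I would observe that on $\operatorname{supp}\alpha_l(\mu_n v_{0(n-1)})$ one has $|l|\le C_n\mu_n$, hence $|l/\mu_n|\le C_n$, and compute $\big(\partial_t+\tfrac{l}{\mu_n}\cdot\nabla\big)\big[\alpha_l(\mu_n v_{0(n-1)})\big]=\mu_n\,(\nabla\alpha_l)(\mu_n v_{0(n-1)})\cdot\big(\partial_t v_{0(n-1)}+\tfrac{l}{\mu_n}\cdot\nabla v_{0(n-1)}\big)$, whose last factor is $C_n$-bounded along with all of its spatial derivatives; so applying $\partial_t+\tfrac{l}{\mu_n}\cdot\nabla$ costs exactly one extra power of $\mu_n$ over the previous estimate, and combining it with $r$ further spatial derivatives gives $\big\|(\partial_t+\tfrac{l}{\mu_n}\cdot\nabla)b_{nl}\big\|_r\le C_n\mu_n^{r+1}$, and the same for $\beta_{nl}$ (the fourth displayed inequality, in which $b_{nl}$ should read $\beta_{nl}$).

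I do not expect a genuine obstacle, since the computation is word-for-word that of Lemma \ref{l:estimate amp}; the one point requiring care — and the reason the constant carries the subscript $n$ — is the bookkeeping of dependencies: $C_n$ may depend on $v_{0(n-1)}$, $\mathring{R}_0$, $e(t)$, $\lambda_0$, $\alpha$ and $\delta$ but not on $\mu_n$ or $\lambda_n$, and one should check that each factor absorbed into $C_n$ above indeed depends only on those data. This holds because $\bar{\rho}$, $R_0$ and $\gamma_{k_n}$ depend only on the original data and on $\delta$, while $v_{0(n-1)}$ depends only on $\mu_1,\dots,\mu_{n-1},\lambda_1,\dots,\lambda_{n-1}$, all of which are already fixed when the $n$-th step is carried out.
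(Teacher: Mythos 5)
Your proposal is correct and follows exactly the route the paper intends: the paper's own proof is just the remark that the argument is the same as that of Lemma \ref{l:estimate amp} (reduce to integer $r$ via (\ref{i:inequality 2}), then Leibniz and chain rule on $\alpha_l(\mu_n v_{0(n-1)})$, each derivative costing one factor of $\mu_n$, with the remaining factors absorbed into $C_n$), and your writeup simply fills in the details, including the correct observation that the fourth inequality should read $\beta_{nl}$.
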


 From the definition of $w_{noc}$ , we have the following estimates
 \begin{Lemma}[Estimates on the n-th correction]
 \begin{align}
 \|w_{noc}\|_\alpha\leq C_n\frac{\mu_n}{\lambda_n^{1-\alpha}},\qquad \forall \alpha\in [0,1)
 \end{align}
 \begin{proof}
 The proof is similar to that of Lemma \ref{e: estimate correction}, we omit it here.
 \end{proof}
 \end{Lemma}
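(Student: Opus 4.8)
The plan is to transcribe the proof of Lemma~\ref{e: estimate correction} with the subscript $1$ replaced by $n$, using the $n$-th step amplitude bounds (Lemma~\ref{e: estimate bnl}) in place of Lemma~\ref{l:estimate amp}; the argument is entirely parallel and is purely a bookkeeping matter. Throughout, $\|\cdot\|_\alpha$ denotes the spatial H\"older norm, and since the time variable enters $w_{nocl}$ only through a unimodular phase $e^{\pm i\lambda_n 2^{|l|}k_n\cdot\frac{l}{\mu_n}t}$, it is harmless to fix $t$ when estimating; also $\lambda_0$ is a fixed parameter which I absorb into the constant $C_n$.

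First I would estimate each summand $w_{nocl}$ separately. From its definition it is $\frac{1}{\lambda_n\lambda_0}$ times a sum of two terms of the form $\big(2^{-|l|}\,\nabla b_{nl}\times B_{\pm k_n}\big)\,e^{\pm i\lambda_n 2^{|l|}k_n\cdot(x-\frac{l}{\mu_n}t)}$. Applying the product inequality (\ref{i:inequality 1}) with the first factor $2^{-|l|}\nabla b_{nl}\times B_{\pm k_n}$ (using that $|B_{\pm k_n}|$ is bounded) and the second the plane wave --- whose $C^0$ norm is $1$ and whose $C^\alpha$ seminorm is $\lesssim(\lambda_n 2^{|l|})^\alpha$ --- gives
\[
\|w_{nocl}\|_\alpha\le \frac{C}{\lambda_n}\Big(2^{-|l|}\|\nabla b_{nl}\|_\alpha+2^{|l|(\alpha-1)}\lambda_n^{\alpha}\|\nabla b_{nl}\|_0\Big)\le \frac{C}{\lambda_n}\big(\|\nabla b_{nl}\|_\alpha+\lambda_n^{\alpha}\|\nabla b_{nl}\|_0\big),
\]
where the point is that $\alpha<1$, so $2^{|l|(\alpha-1)}\le1$ and all $2^{|l|}$-factors can simply be dropped. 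Moreover $w_{nocl}\ne0$ only where $\alpha_l(\mu_n v_{0(n-1)})\ne0$, which forces $|l|\le\mu_n\|v_{0(n-1)}\|_0+1\le C_n\mu_n$; hence Lemma~\ref{e: estimate bnl} applies and yields $\|\nabla b_{nl}\|_0\le C_n\mu_n$ and $\|\nabla b_{nl}\|_\alpha\le\|b_{nl}\|_{1+\alpha}\le C_n\mu_n^{1+\alpha}$. Substituting and using $1\ll\mu_n\ll\lambda_n$ (so $\mu_n^\alpha/\lambda_n^\alpha\le1$), I obtain $\|w_{nocl}\|_\alpha\le C_n\,\mu_n/\lambda_n^{1-\alpha}$.

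Finally I would sum over $l\in Z^3$: by the partition of unity (\ref{p:unity}), $\operatorname{supp}\alpha_l\cap\operatorname{supp}\alpha_{l'}=\emptyset$ whenever $|l-l'|\ge2$, so at every point $(x,t)$ only a universally bounded number of mutually separated terms contribute to $w_{noc}=\sum_l w_{nocl}$, and the elementary bound for the H\"older norm of such a locally finite sum (used already in \cite{CDL2} and in the first step) gives $\|w_{noc}\|_\alpha\le C\sup_l\|w_{nocl}\|_\alpha\le C_n\,\mu_n/\lambda_n^{1-\alpha}$, which is the claim. There is no genuine obstacle: this is a routine transcription of the first step, and the only two points needing a line of care are (i) checking that the plane-wave frequency $\lambda_n 2^{|l|}$ entering the $C^\alpha$ seminorm is beaten by the $2^{-|l|}$ prefactor because $\alpha<1$, so the $2^{|l|}$-factors disappear, and (ii) organizing the $l$-summation via (\ref{p:unity}) so that H\"older norms add harmlessly --- both handled exactly as in the construction of $w_{1oc}$.
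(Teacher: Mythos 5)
Your proof is correct and follows exactly the route the paper intends: it is the first-step argument for $w_{1oc}$ (product inequality plus the amplitude bounds of Lemma \ref{e: estimate bnl} and the finite overlap of the partition of unity \eqref{p:unity}) transcribed with $1$ replaced by $n$, which is precisely what the paper's one-line proof refers to. The extra care you take with the $2^{\pm|l|}$ factors is a correct filling-in of a detail the paper leaves implicit.
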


 \subsection{Estimates on $\delta \mathring{R}_{0n}$}

 \indent

 As in (4.54), we have
  \begin{align}
   \delta R_{0n}=&\mathcal{R}(\hbox{div}M_n)+N_n-tr(N_n)Id+\mathcal{R}\Big\{\partial_tw_{n}
   +\hbox{div}\Big[\sum_{l\in Z^3}\Big(w_{nl}\otimes\frac{l}{\mu_{n}}+\frac{l}{\mu_{n}}\otimes w_{nl}\Big)\Big]\Big\}\nonumber\\
   &+N_n+(w_{no}\otimes w_{noc}+w_{noc}\otimes w_{no}+w_{noc}\otimes w_{noc})\nonumber\\
   &-\frac{2w_{no}\cdot w_{noc}+|w_{noc}|^2}{3}Id-\mathcal{R}\Big(\Big(\chi_n-
   \fint_{T^{3}}{\chi_n}dx\Big)e_3\Big).\nonumber
   \end{align}
  We again split the stresses into three parts,  \\
  (1) the oscillation part $$\mathcal{R}(\hbox{div}M_n)-\mathcal{R}\Big(\Big(\chi_n-\fint_{T^{3}}{\chi_n}dx\Big)e_3\Big),$$
  (2) the transportation part
  \begin{align}
  &\mathcal{R}\Big\{\partial_tw_{n}
   +\hbox{div}\Big[\sum_{l\in Z^3}\Big(w_{nl}\otimes\frac{l}{\mu_{n}}+\frac{l}{\mu_{n}}\otimes w_{nl}\Big)\Big]\Big\}=\mathcal{R}\Big(\partial_tw_{n}+\sum_{l\in Z^3}\Big(\frac{l}{\mu_{n}}\cdot\nabla\Big)w_{nl}\Big),\nonumber
   \end{align}
  (3) the error part
  \begin{align}
  &N_n-tr(N_n)Id+(w_{no}\otimes w_{noc}+w_{noc}\otimes w_{no}+w_{noc}\otimes w_{noc})-\frac{2w_{no}\cdot w_{noc}+|w_{noc}|^2}{3}Id.\nonumber
   \end{align}
As before, we will estimate each term separately.

\begin{Lemma}[The oscillation part]
\begin{align}
\|\mathcal{R}({\rm div}M_n)\|_\alpha\leq& C_n\frac{\mu_n^3}{\lambda_n^{1-\alpha}},\qquad \\
\Big\|\mathcal{R}\Big(\Big(\chi_n-\fint_{T^{3}}{\chi_n}dx\Big)e_3\Big)\Big\|_\alpha\leq& C_n\frac{\mu_n}{\lambda_n^{1-\alpha}}.
\end{align}
\end{Lemma}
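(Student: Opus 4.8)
The plan is to mirror, almost word for word, the proof of Lemma \ref{e:oscillation estimate}, replacing the index $1$ by $n$ and Lemma \ref{l:estimate amp} by Lemma \ref{e: estimate bnl}. First I would split $\hbox{div}M_n=M_{n1}+M_{n2}$, where $M_{n1}$ gathers the self-interaction of each wave,
\begin{align}
M_{n1}=\sum_{l\in Z^3}\Big(B_{k_n}\otimes B_{k_n}e^{2i\lambda_n 2^{|l|}k_n\cdot(x-\frac{l}{\mu_n}t)}+B_{-k_n}\otimes B_{-k_n}e^{-2i\lambda_n 2^{|l|}k_n\cdot(x-\frac{l}{\mu_n}t)}\Big)\nabla(b_{nl}^2),\nonumber
\end{align}
and $M_{n2}=\sum_{l\neq l'}\hbox{div}(w_{nol}\otimes w_{nol'})$ collects the interactions between distinct waves. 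The algebraic input is again $k_n\cdot B_{k_n}=k_n\cdot B_{-k_n}=0$, which forces every spatial derivative to land on an amplitude and not on an exponential; hence each summand of $\hbox{div}M_n$ is a constant matrix contracted with $\nabla(b_{nl}^2)$ (resp. $\nabla(b_{nl}b_{nl'})$), times a purely time-dependent phase, times $e^{i\lambda_n c\,k_n\cdot x}$ with $c\in\{\pm 2^{|l|}\}$ for $M_{n1}$ and $c\in\{\pm(2^{|l|}+2^{|l'|}),\,\pm(2^{|l|}-2^{|l'|})\}$ for $M_{n2}$.

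Next I would invoke the oscillatory bound for $\mathcal{R}$ exactly as in \eqref{e:R b1l}: for a fixed large $m$ and $1\ll\mu_n\ll\lambda_n$, Lemma \ref{e: estimate bnl} and the mapping properties of $\mathcal{R}$ recalled in Appendix A give $\|\mathcal{R}(\nabla\phi\,e^{i\lambda_n c\,k_n\cdot x})\|_\alpha\leq C_n\mu_n\lambda_n^{\alpha-1}$ for $\phi\in\{b_{nl}^2,\,b_{nl}b_{nl'}\}$ and $c\neq 0$, since $\|\nabla\phi\|_0\leq C_n\mu_n$ by \eqref{i:inequality 1} and Lemma \ref{e: estimate bnl}. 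Because $b_{nl}\not\equiv 0$ forces $|\mu_n v_{0(n-1)}-l|\leq 1$ and $\|v_{0(n-1)}\|_0\leq C_n$, only $O(\mu_n^3)$ indices $l$ occur in $M_{n1}$ and only $O(\mu_n^3)$ pairs $(l,l')$ occur in $M_{n2}$ (a factor $\mu_n^3$ from $l$, then $O(1)$ admissible $l'$ since $\hbox{supp}\,\alpha_l\cap\hbox{supp}\,\alpha_{l'}=\emptyset$ for $|l-l'|\geq 2$); summing the above yields $\|\mathcal{R}(\hbox{div}M_{n1})\|_\alpha\leq C_n\mu_n^2\lambda_n^{\alpha-1}$ and $\|\mathcal{R}(\hbox{div}M_{n2})\|_\alpha\leq C_n\mu_n^3\lambda_n^{\alpha-1}$, which is the first inequality. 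For the second inequality, when $n\geq 4$ we have $\chi_n\equiv 0$ and there is nothing to prove; for $n=2,3$ one writes $\chi_n-\fint_{T^3}\chi_n\,dx=\sum_l\beta_{nl}\big(e^{i\lambda_n 2^{|l|}k_n\cdot(x-\frac{l}{\mu_n}t)}+e^{-i\lambda_n 2^{|l|}k_n\cdot(x-\frac{l}{\mu_n}t)}\big)-\fint_{T^3}\chi_n\,dx$, uses that $\mathcal{R}$ annihilates functions of $t$ alone, and applies the same oscillatory estimate with $\beta_{nl}$ (so $\|\beta_{nl}\|_0\leq C_n$ by Lemma \ref{e: estimate bnl}) in place of $\nabla b_{nl}$; summing over the $O(\mu_n)$ relevant $l$ gives $C_n\mu_n\lambda_n^{\alpha-1}$.

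The main obstacle, exactly as in the first step, sits in the off-diagonal term $M_{n2}$: one must count its $O(\mu_n^3)$ contributing index pairs and, for each, certify that the surviving spatial phase is genuinely oscillatory, i.e.\ that $|\lambda_n c\,k_n|\gtrsim\lambda_n$, so that the gain $\lambda_n^{\alpha-1}$ in \eqref{e:R b1l} is actually available. The combinations $c=\pm(2^{|l|}+2^{|l'|})$ are automatically of size $\gtrsim\lambda_n$; the only delicate ones are $c=\pm(2^{|l|}-2^{|l'|})$ with $|l|=|l'|$, which have to be handled as in the $M_{12}$ step of Lemma \ref{e:oscillation estimate}. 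No new idea is needed beyond that transcription; the cost is the loss of the factor $\mu_n^3$, to be absorbed later by taking $\lambda_n$ large compared with $\mu_n$.
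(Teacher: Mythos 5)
Your proposal is correct and follows essentially the same route as the paper: the paper omits the proof of this lemma entirely, stating only that it is obtained from Lemma \ref{e:oscillation estimate} by replacing the index $1$ with $n$ (and Lemma \ref{l:estimate amp} with Lemma \ref{e: estimate bnl}), which is precisely the transcription you carry out, including the split ${\rm div}M_n=M_{n1}+M_{n2}$, the use of $k_n\cdot B_{\pm k_n}=0$ to put every derivative on an amplitude, the stationary-phase bound for $\mathcal{R}$, and the observation that $\chi_n\equiv 0$ for $n\geq 4$. The only caveat is a harmless bookkeeping mismatch in the count of contributing indices $l$ (the paper's first-step proof effectively counts $O(\mu_1)$ values of $l$ where you count $O(\mu_n^3)$, and your own tally of ``$O(\mu_n^3)$ terms each of size $C_n\mu_n\lambda_n^{\alpha-1}$'' summing to $C_n\mu_n^2\lambda_n^{\alpha-1}$ is not internally consistent either), but both arguments land on the stated bound $C_n\mu_n^3/\lambda_n^{1-\alpha}$ and any extra power of $\mu_n$ is absorbed by the eventual choice $\mu_n=\lambda_n^{1/5}$.
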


\begin{Lemma}[The transportation part]
\begin{align}
\Big\|\mathcal{R}\Big(\partial_tw_{n}+\sum_{l\in Z^3}\frac{l}{\mu_{n}}\cdot\nabla w_{n}\Big)\Big\|_\alpha\leq C_n\frac{\mu_n^2}{\lambda_n^{1-\alpha}}.
\end{align}
\end{Lemma}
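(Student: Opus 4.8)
The plan is to follow the proof of Lemma \ref{e:transport estimate} (the transportation estimate of the first step) line by line, the only differences being that the base velocity $v_0$ is replaced by the already constructed $v_{0(n-1)}$, and that the amplitude bounds of Lemma \ref{l:estimate amp} are replaced by their $n$-th counterparts in Lemma \ref{e: estimate bnl}. First I recall the representation
\begin{align*}
w_{nl}=B_{nlk_n}e^{i\lambda_n 2^{|l|} k_n\cdot (x-\frac{l}{\mu_n}t)}+B_{-nlk_n}e^{-i\lambda_n 2^{|l|} k_n\cdot (x-\frac{l}{\mu_n}t)},\qquad w_n=\sum_{l\in Z^3}w_{nl},
\end{align*}
where $B_{\pm nlk_n}=b_{nl}B_{\pm k_n}+\frac{1}{\lambda_n\lambda_0}\frac{\nabla b_{nl}\times B_{\pm k_n}}{2^{|l|}}$. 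The crucial algebraic fact is the transport identity $\big(\partial_t+\frac{l}{\mu_{n}}\cdot\nabla\big)e^{\pm i\lambda_n 2^{|l|}k_n\cdot(x-\frac{l}{\mu_n}t)}=0$, so that differentiating $w_n$ along the patched-together frozen velocities produces only amplitude derivatives:
\begin{align*}
\partial_tw_{n}+\sum_{l\in Z^3}\frac{l}{\mu_{n}}\cdot\nabla w_{nl}=\sum_{l\in Z^3}\Big[&\Big(\partial_t+\frac{l}{\mu_n}\cdot\nabla\Big)B_{nlk_n}\,e^{i\lambda_n 2^{|l|}k_n\cdot(x-\frac{l}{\mu_n}t)}\\
&+\Big(\partial_t+\frac{l}{\mu_n}\cdot\nabla\Big)B_{-nlk_n}\,e^{-i\lambda_n 2^{|l|}k_n\cdot(x-\frac{l}{\mu_n}t)}\Big].
\end{align*}

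Next, since $B_{\pm nlk_n}$ is built out of $b_{nl}$ and $\frac{1}{\lambda_n\lambda_0}\nabla b_{nl}$ and since $1\ll\mu_n\ll\lambda_n$, Lemma \ref{e: estimate bnl} gives, for every $r\ge0$ and every $l$ with $|l|\le C_n\mu_n$, the bound $\big\|(\partial_t+\frac{l}{\mu_n}\cdot\nabla)B_{\pm nlk_n}\big\|_r\le C_n\mu_n^{r+1}$, the corrector contributing only the lower-order term of size $C_n\mu_n^{r+2}/\lambda_n$. I then apply the single-mode estimate for $\mathcal{R}$ recorded in Appendix A, property (\ref{p:R G}): for a fixed large $m$ and any amplitude $a$,
\begin{align*}
\Big\|\mathcal{R}\big(a\,e^{i\lambda_n 2^{|l|}k_n\cdot x}\big)\Big\|_\alpha\le C_{n,m}\Big(\frac{\|a\|_0}{\lambda_n^{1-\alpha}}+\frac{[a]_m}{\lambda_n^{m-\alpha}}+\frac{[a]_{m+\alpha}}{\lambda_n^{m}}\Big)\le C_n\frac{\mu_n}{\lambda_n^{1-\alpha}},
\end{align*}
the modulus-one time phase $e^{-i\lambda_n 2^{|l|}k_n\cdot\frac{l}{\mu_n}t}$ being harmless since $\mathcal{R}$ acts only in $x$. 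To sum over $l$ I group the terms by the level $j:=|l|$, on which the spatial frequency $\lambda_n 2^{j}k_n$ is the same for all such $l$; by (\ref{p:unity}) at most $O(1)$ of the cutoffs $\alpha_l(\mu_n v_{0(n-1)})$ are nonzero at any given point, so the aggregated amplitude $\sum_{|l|=j}(\partial_t+\frac{l}{\mu_n}\cdot\nabla)B_{\pm nlk_n}$ still obeys the bound $C_n\mu_n^{r+1}$, and $\mathcal{R}$ applied to the $j$-th group is $\le C_n\mu_n/\lambda_n^{1-\alpha}$. Since $b_{nl}\not\equiv0$ forces $|\mu_n v_{0(n-1)}-l|\le1$, only $j\le C_n\mu_n$ occur, and summing these $O(\mu_n)$ groups yields
\begin{align*}
\Big\|\mathcal{R}\Big(\partial_tw_{n}+\sum_{l\in Z^3}\frac{l}{\mu_{n}}\cdot\nabla w_{nl}\Big)\Big\|_\alpha\le C_n\frac{\mu_n^2}{\lambda_n^{1-\alpha}},
\end{align*}
which is the claim.

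There is no genuine obstacle here: the argument is a direct transcription of the first step. The only point requiring a little care is the bookkeeping in the final summation over $l$ — performing it naively over the $\sim\mu_n^3$ active indices would cost too much, and one really must exploit the bounded-overlap structure of the cutoffs $\alpha_l$ together with the grouping by the level $|l|$, exactly as in the proof of Lemma \ref{e:transport estimate}. The contributions coming from the corrector $w_{noc}$ are all $O(\lambda_n^{-1})$ smaller than the leading ones because $\mu_n\ll\lambda_n$, and so do not affect the final bound.
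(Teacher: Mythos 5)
Your argument is exactly the paper's: the proof of this lemma is omitted in the paper with a pointer to the first-step transportation estimate (Lemma 6.4), whose proof is precisely the computation you carry out --- the transport identity $(\partial_t+\tfrac{l}{\mu_n}\cdot\nabla)e^{\pm i\lambda_n 2^{|l|}k_n\cdot(x-l t/\mu_n)}=0$ reduces everything to amplitude derivatives, which are then fed into the stationary-phase bound for $\mathcal{R}$ --- with $v_0$, $b_{1l}$ replaced by $v_{0(n-1)}$, $b_{nl}$ and Lemma 6.1 replaced by Lemma 9.1. Your bookkeeping for the sum over $l$ (grouping by the level $|l|$ and invoking the bounded overlap of the cutoffs $\alpha_l$) is in fact more explicit than the paper, which at the corresponding point simply writes $\sum_{|l|\le C_1\mu_1}C_1\mu_1/\lambda_1^{1-\alpha}\le C_1\mu_1^2/\lambda_1^{1-\alpha}$; any residual discrepancy in the power of $\mu_n$ from this count is harmless for the scheme, since the oscillation part already carries $\mu_n^3/\lambda_n^{1-\alpha}$ and the final choice $\mu_n=\lambda_n^{1/5}$, $\alpha<1/5$ absorbs it.
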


\begin{Lemma}[Estimates on error part I]
\begin{align}
\|N_n\|_0\leq \frac{C_n}{\mu_n}.
\end{align}
\end{Lemma}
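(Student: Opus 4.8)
The statement is the $n$-th-step analogue of Lemma~\ref{e:error 1}, so the plan is to repeat that proof almost verbatim, replacing $v_0,\,w_{1l},\,b_{1l},\,\mu_1,\,\lambda_1$ by $v_{0(n-1)},\,w_{nl},\,b_{nl},\,\mu_n,\,\lambda_n$. First I would use $w_{nl}=w_{nol}+w_{nocl}$ to split
\[
N_n=N_{n1}+N_{n2},\qquad N_{n1}:=\sum_{l\in Z^3}\Big[w_{nol}\otimes\Big(v_{0(n-1)}-\frac{l}{\mu_n}\Big)+\Big(v_{0(n-1)}-\frac{l}{\mu_n}\Big)\otimes w_{nol}\Big],
\]
with $N_{n2}$ the same sum with $w_{nocl}$ in place of $w_{nol}$; by the symmetry of the tensor product it suffices to bound $\sum_{l}w_{nol}\otimes(v_{0(n-1)}-l/\mu_n)$ and $\sum_{l}w_{nocl}\otimes(v_{0(n-1)}-l/\mu_n)$ in $\|\cdot\|_0$.

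For $N_{n1}$: since $b_{nl}(x,t)=\sqrt{\bar\rho(t)}\,\alpha_l(\mu_n v_{0(n-1)})\,\gamma_{k_n}\!\big(R_0/\bar\rho(t)\big)$ and $\mathrm{supp}\,\alpha_l\subseteq B_{c_2}(l)$ with $c_2<1$, the term $w_{nol}(x,t)$ vanishes unless $|\mu_n v_{0(n-1)}(x,t)-l|<1$, i.e.\ unless $|v_{0(n-1)}(x,t)-l/\mu_n|<1/\mu_n$; moreover, by the partition-of-unity property \eqref{p:unity}, at most a fixed absolute number of indices $l$ contribute at any given $(x,t)$. Combining this localization with the uniform bounds $\|b_{nl}\|_0\le C_n$ (the case $r=0$ of Lemma~\ref{e: estimate bnl}) and $|B_{\pm k_n}|\le C$ gives $\|N_{n1}\|_0\le C_n/\mu_n$.

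For $N_{n2}$: the correction $w_{nocl}$ is supported where $b_{nl}$ is, so $|v_{0(n-1)}-l/\mu_n|<1/\mu_n$ still holds on its support; combining this with $\|w_{noc}\|_0\le C_n\mu_n/\lambda_n$ (the case $\alpha=0$ of the estimate on the $n$-th correction, proved exactly as Lemma~\ref{e: estimate correction}) and the same bounded overlap yields $\|N_{n2}\|_0\le C_n/\lambda_n$. Since throughout this section we work in the regime $1\ll\mu_n\ll\lambda_n$, this is $\le C_n/\mu_n$, and adding the two estimates gives $\|N_n\|_0\le C_n/\mu_n$; in particular $\|\mathrm{tr}\,N_n\|_0\le C_n/\mu_n$.

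I do not expect a genuine obstacle here: this is a routine transcription of the first-step argument. The only points deserving care are that the overlap constant produced by \eqref{p:unity} is genuinely absolute (so that it can be absorbed into $C_n$), and that the inductive hypotheses \eqref{e:esti 0m} are truly in force before $\mu_n,\lambda_n$ are selected — in particular that $v_{0(n-1)}$ is a fixed bounded smooth function once $\mu_1,\lambda_1,\dots,\mu_{n-1},\lambda_{n-1}$ have been fixed — so that $C_n$ may legitimately depend on $v_{0(n-1)}$ but not on $\mu_n,\lambda_n$.
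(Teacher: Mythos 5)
Your proof is correct and is essentially the paper's: the paper omits the argument for this lemma, stating only that it is the same as the first-step Lemma on $N_1$, and your transcription (splitting $N_n$ into the $w_{nol}$ and $w_{nocl}$ parts, using the support condition $|\mu_n v_{0(n-1)}-l|\le 1$ together with the bounded overlap of the $\alpha_l$ for the first part, and the smallness of the correction for the second) reproduces that proof with the correct index replacements.
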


\begin{Lemma}[Estimates on error part II]
\begin{align}
\|w_{no}\otimes w_{noc}+w_{noc}\otimes w_{no}+w_{noc}\otimes w_{noc}\|_0\leq C_n\frac{\mu_n}{\lambda_n}.
\end{align}
\end{Lemma}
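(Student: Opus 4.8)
The plan is to mimic exactly the argument already carried out for the first step in Lemma \ref{e: error 2}, now with the index $n$ in place of $1$. First I would recall that by construction $w_{no}$ is the sum $w_{no}=\sum_{l\in Z^3}w_{nol}$ of the principal plane‑wave terms, and by the estimate \eqref{b:bound 2} (the analogue of \eqref{e:bound 1o}) one has the uniform bound $\|w_{no}\|_0\leq \frac{M\sqrt\delta}{2L}$, so in particular $\|w_{no}\|_0\leq C_n$. Next I would invoke the preceding Lemma (estimate on the $n$‑th correction), which gives $\|w_{noc}\|_\alpha\leq C_n\frac{\mu_n}{\lambda_n^{1-\alpha}}$ for every $\alpha\in[0,1)$; specializing to $\alpha=0$ yields the crucial smallness $\|w_{noc}\|_0\leq C_n\frac{\mu_n}{\lambda_n}$.

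With these two ingredients the estimate is a one‑line application of the elementary product inequality \eqref{i:inequality 1} with $r=0$, i.e. $\|fg\|_0\leq C\|f\|_0\|g\|_0$, applied termwise to the three summands. Concretely,
\begin{align}
\|w_{no}\otimes w_{noc}+w_{noc}\otimes w_{no}+w_{noc}\otimes w_{noc}\|_0
\leq C_n\bigl(\|w_{no}\|_0\|w_{noc}\|_0+\|w_{noc}\|_0^2\bigr)
\leq C_n\frac{\mu_n}{\lambda_n},\nonumber
\end{align}
where in the last step one uses $\|w_{no}\|_0\leq C_n$, $\|w_{noc}\|_0\leq C_n\frac{\mu_n}{\lambda_n}$, and $\frac{\mu_n^2}{\lambda_n^2}\leq \frac{\mu_n}{\lambda_n}$ since $\mu_n\leq\lambda_n$. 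As in the first step one then records the immediate consequence for the scalar contraction, $\|2w_{no}\cdot w_{noc}+|w_{noc}|^2\|_0\leq C_n\frac{\mu_n}{\lambda_n}$, which is the form in which this quantity enters $p_{0n}$ and $\delta\mathring R_{0n}$.

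There is essentially no obstacle here: the only point requiring a little care is to make sure the constant $C_n$ is of the declared type — depending on $v_{0(n-1)},\mathring R_0,e(t),\lambda_0,\alpha,\delta$ but not on $\mu_n,\lambda_n$ — which is automatic since $\|w_{no}\|_0$ is bounded by $\frac{M\sqrt\delta}{2L}$ independently of $\mu_n,\lambda_n$ and the correction bound already has this property from the previous lemma. I would close by remarking that the proof is identical to that of Lemma \ref{e: error 2} with $1$ replaced by $n$, so no further detail is needed.
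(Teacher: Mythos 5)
Your proposal is correct and is essentially identical to the paper's argument: the paper simply remarks that the proof is the same as that of the first-step Lemma~\ref{e: error 2}, which is exactly the termwise bound $C_n(\|w_{no}\|_0\|w_{noc}\|_0+\|w_{noc}\|_0^2)$ combined with $\|w_{no}\|_0\le \frac{M\sqrt{\delta}}{2L}$ and $\|w_{noc}\|_0\le C_n\frac{\mu_n}{\lambda_n}$ that you use.
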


The proof of the above four lemmas are similar to that of Lemma \ref{e:oscillation estimate}, Lemma \ref{e:transport estimate}, Lemma \ref{e:error 1} and Lemma \ref{e: error 2} respectively,  we omit it here.

For $n\geq4$, $\chi_n=0$, then $$\mathcal{R}\Big(\Big(\chi_n-\fint_{T^{3}}{\chi_n}dx\Big)e_3\Big)\Big)=0.$$

Finally we obtain
\begin{align}
\|\delta\mathring{R}_{0n}\|_0\leq C_n\Big(\frac{\mu_n^3}{\lambda_n^{1-\alpha}}+\frac{1}{\mu_n}\Big),\qquad \forall\alpha\in(0,1).
\end{align}
From the above estimates and constructions of $v_{0n}, p_{0n}$, we also conclude that
\begin{align}
\|v_{0n}-v_{0(n-1)}\|_0\leq& \frac{M\sqrt{\delta}}{2L}+C_n\frac{\mu_n}{\lambda_n} ,\\
\|p_{0n}-p_{0(n-1)}\|_0\leq& C_n\Big(\frac{1}{\mu_n}+\frac{\mu_n}{\lambda_n} \Big).
\end{align}

\subsection{Estimates on $\delta f_{0n}$}

 \indent

Recall that when $n=2,3$, we have
\begin{align}
   \delta f_{0n}=&\mathcal{G}(\hbox{div}K_n)
   +\mathcal{G}(w_n\cdot\nabla\theta_{0(n-1)})
   +\mathcal{G}\Big(\partial_t\chi_{n}+\sum_{l\in Z^3}\Big(\frac{l}{\mu_{1}}\cdot\nabla\Big)\chi_{nl}\Big)\nonumber\\
   &+\sum_{l\in Z^3}\Big(v_0-\frac{l}{\mu_n}\Big)\chi_{nl}+w_{noc}\chi_n.\nonumber
\end{align}
where we used the fact $\mathcal{G}(f(t))=0.$

As before, we split $\delta f_{0n}$ into three parts:\\
(1) the Oscillation Part:
\begin{align}
\mathcal{G}(\hbox{div}K_n)+\mathcal{G}(w_n\cdot\nabla\theta_{0(n-1)}).\nonumber
\end{align}
(2) the transportation part:
\begin{align}
\mathcal{G}\Big(\partial_t\chi_{n}+\sum_{l\in Z^3}\Big(\frac{l}{\mu_{1}}\cdot\nabla\Big)\chi_{nl}\Big).\nonumber
\end{align}
(3) the error part:
\begin{align}
w_{noc}\chi_n+\sum_{l\in Z^3}\Big(v_0-\frac{l}{\mu_n}\Big)\chi_{nl}.\nonumber
\end{align}
When $n\geq4$, we have
\begin{align}
   \delta f_{0n}=\mathcal{G}(w_{n}\cdot\nabla\theta_{03}).\nonumber
  \end{align}
  First, we estimate $\delta f_{0n}$ for $n=2,3$.

\begin{Lemma}[The Oscillation Part]
\begin{align}
\|\mathcal{G}({\rm div}K_n)+\mathcal{G}(w_n\cdot\nabla\theta_{0(n-1)})\|_\alpha\leq C_n\frac{\mu_n^3}{\lambda_n^{1-\alpha}}.
\end{align}
\end{Lemma}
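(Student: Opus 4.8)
The plan is to follow the proof of Lemma~\ref{e:osci}, its first-step analogue, essentially verbatim, substituting the $n$-th-step objects $\mu_n,\lambda_n,b_{nl},\beta_{nl},w_n,\chi_n,\theta_{0(n-1)},K_n$ (here $n=2,3$) for $\mu_1,\lambda_1,b_{1l},\beta_{1l},w_1,\chi_1,\theta_0,K_1$, and using Lemma~\ref{e: estimate bnl} in place of Lemma~\ref{l:estimate amp}, together with the smoothing property~(\ref{p:R G}) of $\mathcal{G}$ and the stationary-phase bound of the type~(\ref{e:R b12}). Throughout, $C_n$ absorbs all norms of the objects built in the previous $n-1$ steps — in particular every $C^r$-norm of $v_{0(n-1)}$ and of $\theta_{0(n-1)}$ — since these are fixed before $\mu_n,\lambda_n$ are chosen.

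First I would bound $\mathcal{G}(\hbox{div}\,K_n)$. Using $k_n\cdot B_{\pm k_n}=0$, decompose $\hbox{div}\,K_n=K_{n1}+K_{n2}$, where $K_{n1}=\sum_{l\in Z^3}\nabla(\beta_{nl}b_{nl})\cdot\bigl(B_{k_n}e^{2i\lambda_n2^{|l|}k_n\cdot(x-\frac{l}{\mu_n}t)}+B_{-k_n}e^{-2i\lambda_n2^{|l|}k_n\cdot(x-\frac{l}{\mu_n}t)}\bigr)$ is the $l=l'$ self-interaction and $K_{n2}=\sum_{l\neq l'}\hbox{div}(w_{nol}\chi_{nl'})$ the cross term. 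Multiplying out the two exponentials in $K_{n2}$ exactly as in Lemma~\ref{e:osci} rewrites it as a finite sum of terms $\bigl(B_{\pm k_n}\cdot\nabla(b_{nl}\beta_{nl'})\bigr)\,e^{\pm i\lambda_n(2^{|l|}\pm2^{|l'|})k_n\cdot x}$ modulated by a pure function of $t$, in which the support property $\hbox{supp}\,\alpha_l\subseteq B_{c_2}(l)$ restricts the sum to $|l-l'|<2$ and the surviving spatial frequencies stay $\gtrsim\lambda_n$. Applying~(\ref{p:R G}) to each summand, with $\|\nabla(\beta_{nl}b_{nl})\|_r\le C_n\mu_n^{r+1}$ and $\|\nabla(b_{nl}\beta_{nl'})\|_r\le C_n\mu_n^{r+1}$ from Lemma~\ref{e: estimate bnl} and~(\ref{i:inequality 1})--(\ref{i:inequality 2}), and summing over the finitely many relevant indices as in the first step, gives $\|\mathcal{G}(K_{n1})\|_\alpha\le C_n\mu_n^2/\lambda_n^{1-\alpha}$ and $\|\mathcal{G}(K_{n2})\|_\alpha\le C_n\mu_n^3/\lambda_n^{1-\alpha}$.

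Next I would bound $\mathcal{G}(w_n\cdot\nabla\theta_{0(n-1)})$. Writing $w_n=\sum_l w_{nl}$ with $w_{nl}=B_{nlk_n}e^{i\lambda_n2^{|l|}k_n\cdot(x-\frac{l}{\mu_n}t)}+B_{-nlk_n}e^{-i\lambda_n2^{|l|}k_n\cdot(x-\frac{l}{\mu_n}t)}$ and $\|B_{\pm nlk_n}\|_r\le C_n\mu_n^r$, the product $w_n\cdot\nabla\theta_{0(n-1)}$ is a finite sum of terms $\bigl(B_{\pm nlk_n}\cdot\nabla\theta_{0(n-1)}\bigr)e^{\pm i\lambda_n2^{|l|}k_n\cdot(x-\frac{l}{\mu_n}t)}$, whose amplitudes (using~(\ref{i:inequality 1}) and that $\theta_{0(n-1)}$ is fixed) have $\|\cdot\|_0\le C_n$ and $[\cdot]_\alpha\le C_n\mu_n$. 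Since every such exponential oscillates at spatial frequency $\ge\lambda_n$, property~(\ref{p:R G}) for $\mathcal{G}$ yields a gain $\lambda_n^{-(1-\alpha)}$ on each term, and summing over the relevant $l$ (those with $b_{nl}\not\equiv0$, i.e. $|\mu_nv_{0(n-1)}-l|\lesssim1$ somewhere) gives $\|\mathcal{G}(w_n\cdot\nabla\theta_{0(n-1)})\|_\alpha\le C_n\mu_n^3/\lambda_n^{1-\alpha}$. Adding the three contributions proves the lemma.

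The amplitude and summation bookkeeping is routine; the point that genuinely needs care here — it is the concrete place where the velocity/temperature coupling shows up, the very reason the geometric lemma had to be extended — is that, unlike in the first step, the background temperature $\theta_{0(n-1)}$ is no longer a function of $x_3$ alone but now carries the oscillations of $\chi_1,\dots,\chi_{n-1}$ at the lower frequencies $\lambda_1,\dots,\lambda_{n-1}$. One must therefore check that neither forming $w_n\cdot\nabla\theta_{0(n-1)}$ nor expanding $\hbox{div}(w_{nol}\chi_{nl'})$ produces an output spatial frequency that degenerates relative to $\lambda_n$. This is exactly what the ordering of the parameters fixed in Section~11 ensures: once $\lambda_n$ is chosen sufficiently large — in particular $\lambda_n\gg\lambda_{n-1}$ — every spatial frequency that appears has size $\ge\lambda_n-O(\lambda_{n-1})\ge\tfrac12\lambda_n$, so $\mathcal{G}$ supplies the full smoothing factor $\lambda_n^{-(1-\alpha)}$, with all implied constants depending only on the already-fixed earlier data and hence absorbed into $C_n$.
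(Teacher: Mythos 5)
Your proof is correct and takes exactly the route the paper intends: the paper omits this proof entirely, stating only that it is ``similar to that of Lemma~\ref{e:osci}'', and your argument is precisely that first-step proof with the substitutions $1\to n$, using Lemma~\ref{e: estimate bnl} in place of Lemma~\ref{l:estimate amp} together with the $\mathcal{G}$-estimate (\ref{e:G}). Your closing observation---that $\theta_{0(n-1)}$ now carries the lower-frequency oscillations of the earlier steps, so one must invoke $\lambda_n\gg\lambda_{n-1}$ to keep the output frequencies comparable to $\lambda_n$---is a point the paper leaves implicit, and you handle it correctly.
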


\begin{Lemma}[The transportation Part]
\begin{align}
\Big\|\mathcal{G}\Big(\partial_t\chi_{n}+\sum_{l\in Z^3}\Big(\frac{l}{\mu_{n}}\cdot\nabla\Big)\chi_{nl}\Big)\Big\|_\alpha\leq C_n\frac{\mu_n^2}{\lambda_n^{1-\alpha}}.
\end{align}
\end{Lemma}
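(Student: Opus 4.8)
The plan is to reproduce, essentially verbatim, the first-step argument of Lemma \ref{e:trans}, now with $(\mu_n,\lambda_n,\beta_{nl},\chi_{nl},k_n)$ playing the roles of $(\mu_1,\lambda_1,\beta_{1l},\chi_{1l},k_1)$. First I would dispose of the range $n\ge 4$: there $\chi_n\equiv 0$ by construction, so the argument of $\mathcal{G}$ vanishes identically and the asserted bound is trivial. Hence it suffices to treat $n=2,3$, which is the only case in which $\chi_n$ is a genuine perturbation.

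For those $n$ the algebraic heart of the matter is the phase invariance
\[\Big(\partial_t+\frac{l}{\mu_n}\cdot\nabla\Big)e^{\pm i\lambda_n 2^{|l|}k_n\cdot(x-\frac{l}{\mu_n}t)}=0,\]
which moves the transport derivative off the exponential and onto the amplitude. Writing $\chi_n=\sum_{l\in Z^3}\chi_{nl}$ with $\chi_{nl}=\beta_{nl}\big(e^{i\phi_{nl}}+e^{-i\phi_{nl}}\big)$ and $\phi_{nl}:=\lambda_n 2^{|l|}k_n\cdot(x-\frac{l}{\mu_n}t)$, this gives
\[\partial_t\chi_n+\sum_{l\in Z^3}\Big(\frac{l}{\mu_n}\cdot\nabla\Big)\chi_{nl}=\sum_{l\in Z^3}\Big(\partial_t+\frac{l}{\mu_n}\cdot\nabla\Big)\beta_{nl}\,\big(e^{i\phi_{nl}}+e^{-i\phi_{nl}}\big),\]
a superposition of amplitudes oscillating at frequency $\lambda_n 2^{|l|}\ge\lambda_n$.

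Next I would apply the smoothing estimate for $\mathcal{G}$ recorded in Appendix~A (property (\ref{p:R G})): for a smooth amplitude $a$ and a fixed large integer $m$,
\[\big\|\mathcal{G}\big(a\,e^{i\lambda_n 2^{|l|}k_n\cdot x}\big)\big\|_\alpha\le C_{n,m}\Big(\lambda_n^{-(1-\alpha)}\|a\|_0+\lambda_n^{-(m-\alpha)}[a]_m+\lambda_n^{-m}[a]_{m+\alpha}\Big),\]
exactly as in (\ref{e:R b1l}). Taking $a=(\partial_t+\frac{l}{\mu_n}\cdot\nabla)\beta_{nl}$ and invoking Lemma \ref{e: estimate bnl}, which yields $\|(\partial_t+\frac{l}{\mu_n}\cdot\nabla)\beta_{nl}\|_r\le C_n\mu_n^{r+1}$, together with $1\ll\mu_n\ll\lambda_n$ so that the $\lambda_n^{-(1-\alpha)}$ term dominates, each summand is $\le C_n\mu_n\lambda_n^{-(1-\alpha)}$. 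Summing over the relevant $l$, namely those with $|l|\le C_n\mu_n$ for which $\alpha_l(\mu_n v_{0(n-1)})\not\equiv 0$ (using that $v_{0(n-1)}$ is bounded independently of $\mu_n,\lambda_n$), we obtain $\sum_{|l|\le C_n\mu_n}C_n\mu_n\lambda_n^{-(1-\alpha)}\le C_n\mu_n^2\lambda_n^{-(1-\alpha)}$ after possibly enlarging $C_n$, which is the claim.

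I do not expect any genuine obstacle here; the proof is structurally identical to that of Lemma \ref{e:trans}. The only two points requiring a line of care are: (i) confirming that with the lacunary phases $\lambda_n 2^{|l|}$ the $\mathcal{G}$-estimate is applied at frequency at least $\lambda_n$, so that the gain $\lambda_n^{-(1-\alpha)}$ is uniform in $l$; and (ii) checking that the $m$-th order terms in that estimate, which carry the higher powers $\mu_n^{m+1}$, are absorbed into the leading term by choosing $m$ large relative to the fixed exponent governing $\mu_n\ll\lambda_n$ — precisely as in the first step.
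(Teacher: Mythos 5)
Your proposal is correct and follows exactly the route the paper intends: the paper omits this proof, stating only that it is similar to Lemma \ref{e:trans}, and your argument is precisely that first-step proof transported to step $n$ (transport identity to shift the derivative onto $\beta_{nl}$, the $\mathcal{G}$ stationary-phase estimate, Lemma \ref{e: estimate bnl}, and summation over $|l|\le C_n\mu_n$), with the additional correct observation that the case $n\ge 4$ is trivial since $\chi_n\equiv 0$.
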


\begin{Lemma}[The error Part]
\begin{align}
\Big\|w_{1oc}\chi_n+\sum_{l\in Z^3}\Big(v_{0(n-1)}-\frac{l}{\mu_{n}}\Big)\chi_{nl}\Big\|_0\leq C_n\Big(\frac{\mu_n^2}{\lambda_n}+\frac{1}{\mu_n}\Big).
\end{align}
\end{Lemma}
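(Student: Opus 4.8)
The plan is to run the argument exactly as in the first step, i.e.\ as in the proof of Lemma \ref{e:est 1}, since the $n$-th step quantity has the same structure; here the first term should be understood as $w_{noc}\chi_n$, the $n$-th-step correction. I would first dispose of the trivial range $n\ge4$: there $\beta_{nl}\equiv0$, hence $\chi_n\equiv0$ and every $\chi_{nl}\equiv0$, so the left-hand side vanishes. It then remains to treat $n=2,3$, and I would split
\[
w_{noc}\chi_n+\sum_{l\in Z^3}\Big(v_{0(n-1)}-\frac{l}{\mu_n}\Big)\chi_{nl}=:\mathrm{I}+\mathrm{II}
\]
and estimate the two pieces separately.

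For $\mathrm{I}$ I would simply invoke the $C^0$ bound $\|w_{noc}\|_0\le C_n\mu_n/\lambda_n$ (the case $\alpha=0$ of the correction estimate proved just above) together with $\|\chi_n\|_0\le M\sqrt\delta/(2L)$, so that by the product inequality (\ref{i:inequality 1}) one gets $\|\mathrm{I}\|_0\le C_n\mu_n/\lambda_n\le C_n\mu_n^2/\lambda_n$. For $\mathrm{II}$, writing $\chi_{nl}=\beta_{nl}(e^{i\lambda_n2^{|l|}k_n\cdot(x-\frac{l}{\mu_n}t)}+e^{-i\lambda_n2^{|l|}k_n\cdot(x-\frac{l}{\mu_n}t)})$ so that $|\chi_{nl}|\le2|\beta_{nl}|$ pointwise, the key point is the support localization coming from the factor $\alpha_l(\mu_nv_{0(n-1)})$ inside $\beta_{nl}$: since $\mathrm{supp}\,\alpha_l\subseteq B_{c_2}(l)$ with $c_2<1$, one has $\beta_{nl}(x,t)\ne0$ only when $|\mu_nv_{0(n-1)}(x,t)-l|\le c_2$, i.e.\ $|v_{0(n-1)}(x,t)-l/\mu_n|\le c_2/\mu_n$. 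Combining this with $\|\beta_{nl}\|_0\le M\sqrt\delta/(10L)$ gives $|(v_{0(n-1)}-l/\mu_n)\chi_{nl}|\le C_n/\mu_n$ pointwise, and since $\mathrm{supp}\,\alpha_l\cap\mathrm{supp}\,\alpha_{l'}=\emptyset$ for $|l-l'|\ge2$, at each point only a bounded number of the terms in the sum over $l$ are nonzero, whence $\|\mathrm{II}\|_0\le C_n/\mu_n$. Adding the two bounds yields $\|\mathrm{I}+\mathrm{II}\|_0\le C_n(\mu_n^2/\lambda_n+1/\mu_n)$, which is the claim.

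I do not expect any real obstacle: this is the $n$-th step analogue of Lemma \ref{e:est 1}, and the only slightly non-mechanical ingredient is the frequency/support localization of $\alpha_l(\mu_nv_{0(n-1)})$, which simultaneously produces the factor $1/\mu_n$ gained from $v_{0(n-1)}-l/\mu_n$ on that support and the local finiteness of the summation over $l$; the rest is just the product inequality (\ref{i:inequality 1}) together with the $C^0$ bounds on $w_{noc}$, $\beta_{nl}$ and $\chi_n$ already recorded in Lemma \ref{e: estimate bnl} and the preceding correction lemma. The only thing to watch is to keep the constant $C_n$ (depending on $v_{0(n-1)},\mathring{R}_0,e(t),\lambda_0,\alpha,\delta$ but not on $\mu_n,\lambda_n$) uniform across the two terms, exactly as in the first step.
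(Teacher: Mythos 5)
Your proposal is correct and follows essentially the same route as the paper: the paper simply declares this lemma "similar to Lemma \ref{e:est 1}" and omits the proof, and your argument is precisely that first-step argument transplanted to step $n$ — the $C^0$ bound on $w_{noc}$ times $\|\chi_n\|_0$ for the first term, and the support localization $|\mu_n v_{0(n-1)}-l|\le c_2<1$ from $\alpha_l$ (giving the $1/\mu_n$ gain and the local finiteness of the sum) for the second. Your explicit treatment of the trivial case $n\ge4$ and the correction of the typo $w_{1oc}\to w_{noc}$ are both appropriate.
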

The proof of the above four lemmas are similar to that of Lemma \ref{e:osci}, Lemma \ref{e:trans}, Lemma \ref{e:est 1}  respectively,  we omit it here.\\
Then we conclude that for $n=2,3$
\begin{align}
\|\delta f_{0n}\|_0\leq C_n\Big(\frac{\mu_n^3}{\lambda_n^{1-\alpha}}+\frac{1}{\mu_n}\Big).
\end{align}
For $n\geq4$, since $$\delta f_{0n}=\mathcal{G}(w_{n}\cdot\nabla\theta_{03}),$$
then
\begin{align}
\|\delta f_{0n}\|_\alpha\leq C_n\frac{\mu_n}{\lambda_n^{1-\alpha}}.\nonumber
\end{align}
Therefore, for any $2\leq n\leq L$, we have
\begin{align}
\|\delta f_{0n}\|_\alpha\leq C_n\Big(\frac{\mu_n^3}{\lambda_n^{1-\alpha}}+\frac{1}{\mu_n}\Big).
\end{align}
Furthermore, as in (\ref{e:difference}), we have
\begin{align}
 \|\theta_{0n}-\theta_{0(n-1)}\|_0\leq& \frac{M\sqrt{\delta}}{2L}+C_n\frac{\mu_n^2}{\lambda_n} .
\end{align}
Where we used
\begin{align}
\sup_t\Big|\int_{T^3}\chi_ndx\Big|
\leq \sum\limits_{|l|\leq C_n\mu_n}C_n\frac{\mu_n}{\lambda_n}
\leq C_n\frac{\mu_n^2}{\lambda_n}.\nonumber
\end{align}
As before, we have
\begin{align}\label{f:nform}
 &R_{0n}:=\bar{\rho}(t)\sum\limits_{i=n+1}^{L}\Big(\gamma_{k_i}\Big(\frac{R_0}{\bar{\rho}(t)}\Big)\Big)^2\Big(Id-
 \frac{k_i}{|k_i|}\otimes\frac{k_i}{|k_i|}\Big)
 +\sum\limits_{i=1}^{n}\delta \mathring{R}_{0i},\\
 &f_{0n}:=
    \left \{
    \begin {array}{ll}
   \sum\limits_{i=n+1}^{3}g_{k_i}(f_0(x,t))A_{k_i}+\sum\limits_{i=1}^{n}\delta f_{0i},\quad n=2,3\\
   \sum\limits_{i=1}^{n}\delta f_{0i},\quad n\geq4,
    \end{array}
    \right.
 \end{align}
and
 \begin{align}
 \|v_{0n}-v_{0(n-1)}\|_0\leq& \frac{M\sqrt{\delta}}{2L}+C_n\frac{\mu_n}{\lambda_n} ,\label{e:difference estimate 1}\\
\|p_{0n}-p_{0(n-1)}\|_0\leq& C_n\Big(\frac{1}{\mu_n}+\frac{\mu_n}{\lambda_n} \Big),\\
 \|v_{0n}-v_{0(n-1)}\|_0\leq& \frac{M\sqrt{\delta}}{2L}+C_n\frac{\mu_n^2}{\lambda_n},\\
\|\delta\mathring{R}_{0n}\|_0\leq& C_n\Big(\frac{\mu_{n}^3}{\lambda_{n}^{1-\alpha}}+\frac{1}{\mu_{n}}\Big),\\
\|\delta f_{0n}\|_0\leq& C_n\Big(\frac{\mu_n^3}{\lambda_{n}^{1-\alpha}}+\frac{1}{\mu_{n}}\Big).\label{e:difference estimate 2}\qquad \forall\alpha\in(0,1).
\end{align}

In particular, we conclude that
\begin{align}
 R_{0L}=&\sum\limits_{i=1}^{L}\delta \mathring{R}_{01},\label{d:final R}\\
 f_{0L}=&\sum\limits_{i=1}^{L}\delta f_{0i}.\label{d:final f}
     \end{align}
 We rewrite $R_{0L}$ as $\mathring{R}_{0L}$, then it's a symmetric and trace-free matrix.\\

\section{Energy Estimate}

\indent

In this section, we assume that $\lambda_n\ll \lambda_{n+1}$ and $C_n< C_{n+1}$, then the functions $v_{0(n-1)},~w_n$  oscillate very slowly compared with $w_{n+1}$.
\begin{Lemma}[Estimates on the energy]
\begin{align}\label{e:energy estimate}
\Big|e(t)\Big(1-\frac{\delta}{2}\Big)-\int_{T^3}|v_{0L}|^2(x,t)dx\Big|\leq  \sum\limits_{i=1}^{L}C_i\frac{\mu^3_i}{\lambda_i}.
\end{align}
\end{Lemma}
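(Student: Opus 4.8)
The plan is to telescope the kinetic energy along the $L$ steps, isolate at each step the one dominant term — the self‑interaction of the principal perturbation $w_{no}$ — identify its sum through the trace of the geometric‑lemma decomposition of $R_0/\bar\rho$, and absorb everything else into $\sum_i C_i\mu_i^3/\lambda_i$.

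First I would use $v_{0L}=v_0+\sum_{n=1}^{L}w_n$ together with $v_{0n}=v_{0(n-1)}+w_n$ to write
\begin{align}
\int_{T^3}|v_{0L}|^2\,dx-\int_{T^3}|v_0|^2\,dx=\sum_{n=1}^{L}\Big(\int_{T^3}|w_n|^2\,dx+2\int_{T^3}v_{0(n-1)}\cdot w_n\,dx\Big),\nonumber
\end{align}
and split $w_n=w_{no}+w_{noc}$. Since $\|w_{noc}\|_0\le C_n\mu_n/\lambda_n$ while $\|w_{no}\|_0\le M\sqrt\delta/(2L)$, the two summands of $|w_n|^2$ containing a factor $w_{noc}$ contribute at most $C_n\mu_n/\lambda_n$, so the main part of step $n$ is carried by $\int_{T^3}|w_{no}|^2\,dx$ alone.

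Writing $\phi_{nl}:=\lambda_n2^{|l|}k_n\cdot(x-\tfrac{l}{\mu_n}t)$, one has $w_{nol}=2b_{nl}\,\mathrm{Re}(B_{k_n}e^{i\phi_{nl}})$, and from $|A_{k_n}|=\tfrac1{\sqrt2}$, $k_n\cdot A_{k_n}=0$ (cf. \eqref{p:identity}) one gets $B_{k_n}\cdot\overline{B_{k_n}}=1$ and $B_{k_n}\cdot B_{k_n}=0$; the latter two give the key cancellation $|w_{nol}|^2=2b_{nl}^2$, with no oscillatory remainder. Since $w_{no}\otimes w_{no}=M_n+2\sum_lb_{nl}^2\,\mathrm{Re}(B_{k_n}\otimes B_{-k_n})$ and $\mathrm{tr}(B_{k_n}\otimes B_{k_n})=0$, taking traces yields $|w_{no}|^2=2\sum_lb_{nl}^2+\mathrm{tr}\,M_n$, so, using $\sum_l\alpha_l^2(\mu_nv_{0(n-1)})\equiv1$,
\begin{align}
\int_{T^3}|w_{no}|^2\,dx=\int_{T^3}2\bar\rho(t)\,\gamma_{k_n}^2\!\Big(\tfrac{R_0}{\bar\rho(t)}\Big)\,dx+\int_{T^3}\mathrm{tr}\,M_n\,dx.\nonumber
\end{align}
Summing over $n$ and taking the trace in $R_0/\bar\rho=\sum_{n=1}^{L}\gamma_{k_n}^2(R_0/\bar\rho)\big(Id-\tfrac{k_n}{|k_n|}\otimes\tfrac{k_n}{|k_n|}\big)$ — which gives $\sum_{n=1}^{L}\gamma_{k_n}^2(R_0/\bar\rho)=\tfrac12\,\mathrm{tr}(R_0/\bar\rho)=\tfrac32$, since $\mathring R_0\in\mathcal S_0^{3\times3}$ — the principal contribution becomes a fixed positive multiple of $\int_{T^3}\bar\rho(t)\,dx=(2\pi)^3\bar\rho(t)$, i.e. of $e(t)(1-\tfrac\delta2)-\int_{T^3}|v_0|^2\,dx$, the definition \eqref{d:amp 2} of $\bar\rho$ being precisely tuned for this cancellation. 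Thus the leading behaviour is $\int_{T^3}|v_{0L}|^2\,dx\approx e(t)(1-\tfrac\delta2)$.

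It remains to absorb the errors. The transport term is handled by oscillation: $v_{0(n-1)}=v_0+\sum_{m<n}w_m$ varies at frequency $\lesssim\lambda_{n-1}\ll\lambda_n$ whereas $w_n$ oscillates at frequency $\ge\lambda_n$, so an integration by parts as in the proof of Lemma~\ref{e:oscillation estimate}, together with $\|b_{nl}\|_r\le C_n\mu_n^r$ (Lemma~\ref{l:estimate amp}) and the bounded‑overlap property $\mathrm{supp}\,\alpha_l\subseteq B_{c_2}(l)$, bounds $|\int_{T^3}v_{0(n-1)}\cdot w_n\,dx|$ by $C_n\mu_n/\lambda_n$. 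For $\int_{T^3}\mathrm{tr}\,M_n\,dx=\int_{T^3}\sum_{l\neq l'}w_{nol}\cdot w_{nol'}\,dx$ one integrates by parts against $e^{i(\phi_{nl}-\phi_{nl'})}$: the amplifying factor $2^{|l|}$ separates the spatial frequencies carried by $l$'s of different norm, so every pair with $|l|\neq|l'|$ is killed with a gain $\lambda_n^{-1}$, and the total is $\le C_n\mu_n^3/\lambda_n$. Adding over $n=1,\dots,L$ gives $\big|e(t)(1-\tfrac\delta2)-\int_{T^3}|v_{0L}|^2\,dx\big|\le\sum_{i=1}^{L}C_i\mu_i^3/\lambda_i$. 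The delicate point — and the one I expect to be the real obstacle — is exactly this last estimate on $\int_{T^3}\mathrm{tr}\,M_n\,dx$: the resonant pairs $(l,l')$ with $|l|=|l'|$, for which $\phi_{nl}-\phi_{nl'}$ loses its spatial oscillation, must be shown not to leave an $O(1)$ residue, which forces one to exploit that an overlap of $\alpha_l(\mu_nv_{0(n-1)})$ and $\alpha_{l'}(\mu_nv_{0(n-1)})$ confines $v_{0(n-1)}$ to a set on which $\int_{T^3}b_{nl}b_{nl'}\,dx$ is negligible (or else to tune $\mu_n,\lambda_n$ accordingly). Once this is settled, the remaining bounds are routine applications of Lemma~\ref{l:estimate amp} and the $\mathcal R$‑estimates already in place.
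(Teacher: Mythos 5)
Your proposal follows essentially the same route as the paper's proof in Section~10: the paper expands $\bigl|v_0+\sum_i(w_{io}+w_{ioc})\bigr|^2$ and splits into the diagonal term $\sum_i\int \mathrm{tr}(w_{io}\otimes w_{io})\,dx$, the cross terms, and the $w_{ioc}$ terms, then uses exactly your identity $\mathrm{tr}\bigl(2\sum_l b_{il}^2\,\mathrm{Re}(B_{k_i}\otimes B_{-k_i})\bigr)=2\bar\rho\,\gamma_{k_i}^2$ together with the trace of the geometric decomposition (and $\mathrm{tr}\,\mathring R_0=0$) to produce $3\bar\rho(t)$, leaving an oscillatory remainder $\mathrm{tr}\sum_iG_i$ bounded by stationary phase; your telescoping $\sum_n 2\int v_{0(n-1)}\cdot w_n$ is just a regrouping of the paper's $2\sum_i\int v_0\cdot w_i+\sum_{i\neq j}\int \mathrm{tr}(w_i\otimes w_j)$. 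The one step you leave open --- the resonant pairs $l\neq l'$ with $|l|=|l'|$, for which $2^{|l|}=2^{|l'|}$ and the cross term in $\mathrm{tr}\,M_n$ carries no spatial oscillation --- is a fair observation, but be aware that the paper does not close it either: its proof simply invokes the stationary phase estimate \eqref{e:average} for all $l\neq l'$ terms of $G_i$ without separating this degenerate case, so your proposal is no less complete than the argument it is being compared against.
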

\begin{proof}
First we have
\begin{align}
&\Big|e(t)\Big(1-\frac{\delta}{2}\Big)-\int_{T^3}|v_{0L}|^2(x,t)dx\Big|\nonumber\\
\leq&\Big|e(t)\Big(1-\frac{\delta}{2}\Big)-\int_{T^3}\Big|v_0+\sum\limits_{i=1}^{L}(w_{i0}+
w_{i0c})\Big|^2(x,t)dx\Big|\nonumber\\
\leq&\Big|e(t)\Big(1-\frac{\delta}{2}\Big)-\int_{T^3}|v_0|^2(x,t)dx-
\sum\limits_{i=1}^{L}\int_{T^3}tr(w_{io}\otimes w_{io})dx\Big|\nonumber\\
&+\Big|2\sum\limits_{i=1}^{L}\int_{T^3}v_0\cdot w_{i}dx+\sum\limits_{1\leq i\neq j\leq L}\int_{T^3}tr(w_i\otimes w_j)dx\Big|\nonumber\\
&+\Big|\sum\limits_{i=1}^{L}\int_{T^3}tr(w_{io}\otimes w_{ioc}+w_{ioc}\otimes w_{io}+w_{ioc}\otimes w_{ioc})dx\Big|\nonumber\\
\leq&I+II+III.\nonumber
\end{align}
We estimate each terms separately.
From the stationary phase estimates (\ref{e:average}), we have
\begin{align}\label{e:energy estimate 2}
II\leq  \sum\limits_{i=1}^{L}C_i\frac{\mu^2_i}{\lambda_i}.
\end{align}
where we used the fact that for $i<j$, the oscillation of $w_j$ is faster than that of $w_i$ and then,
\begin{align}
\Big|\int_{T^3}w_i\otimes w_j(x,t)dx\Big|\leq C_j\frac{\mu_j^2}{\lambda_j}.\nonumber
\end{align}
From the smallness of $w_{ioc}$, we have
\begin{align}\label{e:energy estimate 3}
III\leq \sum\limits_{i=1}^{L}C_i\frac{\mu_i}{\lambda_i}.
\end{align}
To estimate $I$, we first compute the term $tr\Big(\sum\limits_{i=1}^{L}w_{i0}\otimes w_{i0}\Big)$.
\begin{align}
w_{i0}\otimes w_{i0}=&2\sum\limits_{l\in Z^3}b^2_{il}Re(B_{k_i}\otimes B_{-k_i})
+\sum\limits_{l\in Z^3}b^2_{il}(x,t)\Big{(}B_{k_i}\otimes B_{k_i}e^{2i\lambda_n 2^{|l|} k_i\cdot (x-\frac{l}{\mu_i}t)}\nonumber\\
&+B_{-k_i}\otimes B_{-k_i}
   e^{-2i\lambda_i 2^{|l|} k_i\cdot (x-\frac{l}{\mu_i}t)}\Big{)}
   +\sum\limits_{l,l'\in Z^3 ,l\neq l'}b_{il}b_{il'}(x,t)\Big(B_{k_i}e^{i\lambda_i 2^{|l|} k_i\cdot (x-\frac{l}{\mu_i}t)}\nonumber\\
   &+B_{-k_i}e^{-i\lambda_i 2^{|l|} k_i\cdot (x-\frac{l}{\mu_i}t)}\Big)\otimes\Big(B_{k_i}e^{i\lambda_i 2^{|l'|} k_i\cdot (x-\frac{l'}{\mu_i}t)}\nonumber+B_{-k_i}e^{-i\lambda_i 2^{|l'|} k_i\cdot (x-\frac{l'}{\mu_i}t)}\Big)\nonumber\\
   =&2\sum\limits_{l\in Z^3}b^2_{il}Re(B_{k_i}\otimes B_{-k_i})+G_i\nonumber\\
   =&\bar{\rho}(t)\gamma^2_{k_i}\Big(\frac{R_0(x,t)}{\bar{\rho}(t)}\Big)\Big(Id-
   \frac{k_i}{|k_i|}\otimes\frac{k_i}{|k_i|}\Big)+G_i,\nonumber
\end{align}
where
\begin{align}
G_i=&\sum\limits_{l\in Z^3}b^2_{il}(x,t)\Big{(}B_{k_i}\otimes B_{k_i}e^{2i\lambda_i 2^{|l|} k_i\cdot (x-\frac{l}{\mu_i}t)}\nonumber+B_{-k_i}\otimes B_{-k_i}
   e^{-2i\lambda_i 2^{|l|} k_i\cdot (x-\frac{l}{\mu_i}t)}\Big{)}\nonumber\\
   &+\sum\limits_{l,l'\in Z^3 ,l\neq l'}b_{il}b_{il'}\Big(B_{k_i}\otimes B_{k_i}e^{i\lambda_1(2^{|l|}+2^{|l'|})k_1\cdot x-ig_{1,l,l'}(t)}\nonumber+B_{k_i}\otimes B_{-k_i}e^{i\lambda_1(2^{|l|}-2^{|l'|})k_i\cdot x-i\overline{g}_{1,l,l'}(t)}\nonumber\\
&+B_{-k_i}\otimes B_{k_i}e^{i\lambda_1(2^{|l'|}-2^{|l|})k_i\cdot x+i\overline{g}_{1,l,l'}(t)}\nonumber+B_{-k_i}\otimes B_{-k_i}e^{-i\lambda_1(2^{|l|}+2^{|l'|})k_i\cdot x-ig_{1,l,l'}(t)}\Big),\nonumber
  \end{align}
 is the oscillatory part.\\

Then, from (\ref{p:split 1}) and $tr\mathring{R_0}=0$, we have
\begin{align}
&tr\Big(\sum\limits_{i=1}^{L}w_{i0}\otimes w_{i0}\Big)\nonumber\\
=&tr\Big(\sum\limits_{i=1}^{L}\bar{\rho}(t)\gamma^2_{k_i}\Big(\frac{R_0(x,t)}{\bar{\rho}(t)}\Big)\Big(Id-\frac{k_i}{|k_i|}\otimes\frac{k_i}{|k_i|}\Big)
+tr\Big(\sum\limits_{i=1}^{L}G_i\Big)\nonumber\\
=&3\rho(t)+tr\Big(\sum\limits_{i=1}^{L}G_i\Big)\nonumber
\end{align}
From the definition (\ref{d:amp 2}) of $\bar{\rho}(t)$, we have
\begin{align}\label{e:energy estimate 1}
I=\Big|\int_{T^3}tr\Big(\sum\limits_{i=1}^{L}G_i\Big)dx\Big| .\nonumber
\end{align}
By Lemma \ref{l:estimate amp} and stationary phase estimate (\ref{e:average}), we have
\begin{align}
I\leq \sum\limits_{i=1}^{L}C_i\frac{\mu^3_i}{\lambda_i} .
\end{align}
Finally, from the estimates (\ref{e:energy estimate 1}), (\ref{e:energy estimate 2}), and (\ref{e:energy estimate 3}), we conclude that
\begin{align}
\Big|e(t)\Big(1-\frac{\delta}{2}\Big)-\int_{T^3}|v_{0L}(x,t)|^2dx\Big|\leq \sum\limits_{i=1}^{L}C_i\frac{\mu^3_i}{\lambda_i}.\nonumber
\end{align}
\end{proof}

\section{Proof of proposition 2.1}
In this section, we collect the estimates from the preceding sections. From these estimates , we can prove Proposition \ref{p: iterative 1} by choosing the appropriate parameters
$\mu_n, \lambda_n$ for $1\leq n\leq L$.
\begin{proof}
By the definition (\ref{d:final R}), (\ref{d:final f}) and the estimates (\ref{e:difference estimate 1})-(\ref{e:difference estimate 2}), (\ref{e:energy estimate}), we have $v_{0N},~p_{0N},~\theta_{0N},~\mathring{R}_{0N},~f_{0N}$, which solve system (\ref{d:boussinesq reynold}) and satisfy
\begin{align}
\|\mathring{R}_{0L}\|_0\leq& \sum\limits_{n=1}^{L}C_n\Big(\frac{\mu_n^3}{\lambda_n^{1-\alpha}}+\frac{1}{\mu_n}\Big),\nonumber\\
\|f_{0L}\|_0\leq& \sum\limits_{n=1}^{L}C_n\Big(\frac{\mu_n^3}{\lambda_n^{1-\alpha}}+\frac{1}{\mu_n}\Big),\qquad \forall\alpha\in(0,1).\nonumber\\
\|v_{0L}-v_0\|_0\leq&\frac{M\sqrt{\delta}}{2}+\sum\limits_{n=1}^{L}C_n\frac{\mu_n}{\lambda_n},\nonumber\\
\|p_{0L}-p_0\|_0\leq&\sum\limits_{n=1}^{L}C_n\Big(\frac{\mu_n}{\lambda_n}+\frac{1}{\mu_n}\Big),\nonumber\\
\|\theta_{0L}-\theta_0\|_0\leq&\frac{M\sqrt{\delta}}{2}+\sum\limits_{n=1}^{L}C_n\frac{\mu_n^2}{\lambda_n},\nonumber\\
\Big|e(t)\Big(1-\frac{\delta}{2}\Big)-&\int_{T^3}|v_{0L}(x,t)|^2dx\Big|\leq \sum\limits_{n=1}^{L}C_n\frac{\mu^3_n}{\lambda_n}.\nonumber
\end{align}
We let $\mu_n=\sqrt[5]\lambda_n$ , then we have
\begin{align}
\|\mathring{R}_{0L}\|_0\leq& \sum\limits_{n=1}^{L}C_n\Big(\frac{1}{\lambda_n^{\frac{2}{5}-\alpha}}+\frac{1}{\sqrt[5]\lambda_n}\Big),\nonumber\\
\|f_{0L}\|_0\leq& \sum\limits_{n=1}^{L}C_n\Big(\frac{1}{\lambda_n^{\frac{2}{5}-\alpha}}+\frac{1}{\sqrt[5]\lambda_n}\Big),\qquad \forall\alpha\in(0,1).\nonumber
\end{align}
and
\begin{align}
\|v_{0L}-v_0\|_0\leq&\frac{M\sqrt{\delta}}{2}+\sum\limits_{n=1}^{L}\frac{C_n}{\lambda_n^{\frac{4}{5}}},\nonumber\\
\|p_{0L}-p_0\|_0\leq&\sum\limits_{n=1}^{L}C_n\Big(\frac{1}{\lambda_n^{\frac{4}{5}}}+\frac{1}{\sqrt[5]\lambda_n}\Big),\nonumber\\
\|\theta_{0L}-\theta_0\|_0\leq&\frac{M\sqrt{\delta}}{2}+\sum\limits_{n=1}^{L}\frac{C_n}{\lambda_n^{\frac{3}{5}}},\nonumber\\
\Big|e(t)\Big(1-\frac{\delta}{2}\Big)-&\int_{T^3}|v_{0L}|^2(x,t)dx\Big|\leq  \sum\limits_{n=1}^{L}\frac{C_n}{\lambda_n^{\frac{2}{5}}}.\nonumber
\end{align}
Let $\lambda_1\ll\lambda_2\ll\cdot\cdot\cdot\ll\lambda_N$ sufficiently large, such that
 $$\frac{C_n}{\sqrt[5]\lambda_n}\leq \hbox{min}\Big(\frac{\eta\delta}{10L},\frac{M\sqrt{\delta}}{10L}, \frac{\delta}{8L}{\rm min}_te(t)\Big).$$
 Set $\alpha<\frac{1}{5}$, we conclude that
\begin{align}
\|\mathring{R}_{0L}\|_0\leq& \frac{\eta\delta}{2},\nonumber\\
\|f_{0L}\|_0\leq& \frac{\eta\delta}{2},\nonumber\\
\|v_{0L}-v_0\|_0\leq& M\sqrt{\delta},\nonumber\\
\|p_{0L}-p_0\|_0\leq& M\sqrt{\delta},\nonumber\\
\|\theta_{0L}-\theta_0\|_0\leq& M\sqrt{\delta},\nonumber\\
\Big|e(t)\Big(1-\frac{\delta}{2}\Big)-&\int_{T^3}|v_{0L}|^2(x,t)dx\Big|\leq \frac{\delta}{8}e(t).\nonumber
\end{align}
Finally, we set $$\tilde{v}=v_{0L},~\tilde{p}=p_{0L},~\tilde{\theta}=\theta_{0L},~\mathring{\tilde{R}}=\mathring{R}_{0L},~\tilde{f}=f_{0L}.$$
then $\tilde{v},~\tilde{p},~\tilde{\theta},~\mathring{\tilde{R}},~\tilde{f}$ are what we need in our Proposition (\ref{p: iterative 1}).
\end{proof}

\section{Proof of proposition 1.2}
In this section, we prove Proposition \ref{p:iterative 2}. The constructions of $\tilde{v},~\tilde{p},~\tilde{\theta},~\mathring{\tilde{R}},~\tilde{f}$ are similar
to the construction in Propositions \ref{p: iterative 1}. Again, we need multi-steps iterations. We only give the first step, the others could be constructed by inductions as in proposition \ref{p: iterative 1}. For convenience, we use the same notations as in Proposition \ref{p: iterative 1}.
\subsection{The main perturbation $w_{1o}$}

\indent

The construction is very similar to that of given in subsection 4.1, beside the choice of amplitude.
First, the functions $\alpha_l, \gamma_{k_1}, g_{k_1}$ are same as before.
We set
   \begin{align*}
        R_0(x,t):=\delta Id-\mathring{R}_0(x,t).
    \end{align*}

   For any $l\in Z^3$, we set
   \begin{align}
    b_{1l}(x,t):=&\sqrt{\delta}\alpha_l(\mu_1 v_0)\gamma_{k_1}\Big{(}\frac{R(x,t)}{\delta}\Big{)},\\
    B_{k_1}:=&A_{k_1}+i\frac{k_1}{|k_1|}\times A_{k_{1}},
   \end{align}
   and $l$-perturbations
   \begin{align}
    w_{1ol}:=b_{1l}(x,t)\Big{(}B_{k_1}e^{i\lambda_1 2^{|l|} k_1\cdot (x-\frac{l}{\mu_1}t)}+B_{-k_1}e^{-i\lambda_1 2^{|l|} k_1\cdot (x-\frac{l}{\mu_1}t)}\Big{)}.
   \end{align}
   Finally, we define 1-th perturbation
   \begin{align}
    w_{1o}:=\sum_{l\in Z^3}w_{1ol}.
   \end{align}
   Obviously, $w_{1ol},w_{1o}$ are all real 3-dimensional vector function.\\

   We remark that  $\hbox{supp} \alpha_l\cap \hbox{supp}\alpha_{l'}=\emptyset$  if $|l-l'|\geq2$, thus the above summation is meaningful.

 \subsection{The constants $\eta$ and $M$}

 \indent

   First, $b_{1l}$ is well-defined only if $\frac{R}{\delta}\in B_{r_{0}}(Id)$ where $r_{0}$ is given in geometric Lemma \ref{p:split}. However, since
   \begin{align}\label{b:near id}
    \Big{\|}\frac{\mathring{R}_0}{\delta}-Id\Big\|_0\leq \frac{\|\mathring{R}_0\|_0}{\delta }\leq \eta,
   \end{align}
 so it suffices to set
   \begin{align*}
    \eta:=\frac{1}{2}r_{0}.
   \end{align*}

   Next notice that there exist two positive constants $c_{10}$ and $c_{20}$ such that
   \begin{align*}
    c_{10}\leq\gamma_{k_1}\Big{(}\frac{R(x,t)}{\delta}\Big{)}\leq c_{20}.
   \end{align*}
   Thus there exists an absolute constant $ M>1$ such that
   \begin{align}
    \|b_{1l}\|_{0}\leq\frac{M\sqrt{\delta}}{10L}.\nonumber
   \end{align}
   Moreover, from the constructions of $\alpha_l$, we know that
     \begin{align}
    \|w_{1o}\|_{0}\leq\frac{M\sqrt{\delta}}{2L}.
   \end{align}

 \subsection{The correction $w_{1oc}$ and the perturbation $w_{1}$ and $\chi_1$}

   \indent

   The following constructions are same as in subsection 4.3. We define the $l$-corrections
   \begin{align}
   w_{1ocl}:=&\frac{1}{\lambda_1\lambda_{0}}\Big{(}\frac{\nabla b_{1l}(x,t)\times B_{k_1}}{2^{|l|}}e^{i\lambda_1 2^{|l|} k_1\cdot (x-\frac{l}{\mu_1}t)}+\frac{\nabla b_{1l}(x,t)\times B_{-{k_1}}}{2^{|l|}}e^{-i\lambda_1 2^{|l|} k_1\cdot (x-\frac{l}{\mu_1}t)}\Big{)},
   \end{align}
   then define 1-th correction
   \begin{align}
    w_{1oc}:=\sum_{l\in Z^3}w_{1ocl},
    \end{align}

   Finally, we define 1-th perturbation
   \begin{align}
    w_1:=w_{1o}+w_{1oc}
   \end{align}
  Thus, if we denote $w_{1l}$ by
   \begin{align}
   w_{1l}:=&w_{1ol}+w_{1ocl}\nonumber\\
   =&\frac{1}{\lambda_1\lambda_{0}}\hbox{curl}\Big(\frac{b_{1l}(x,t)B_{k_1}}{2^{|l|}}e^{i\lambda_1 2^{|l|} k_1\cdot (x-\frac{l}{\mu_1}t)}+\frac{b_{1l}(x,t)B_{-{k_1}}}{2^{|l|}}e^{-i\lambda_1 2^{|l|}k_1\cdot (x-\frac{l}{\mu_1}t)}\Big{)},
   \end{align}
   then
   \begin{align}
   w_1=\sum\limits_{l\in Z^3}w_{1l},\qquad \hbox{div}w_{1l}=0, \qquad {\rm div}w_1=0.\nonumber
   \end{align}
Moreover, if we set
\begin{align}
B_{1lk_1}=&b_{1l}(x,t)B_{k_1}+\frac{1}{\lambda_1\lambda_{0}}\frac{\nabla b_{1l}(x,t)\times B_{k_1}}{2^{|l|}},\nonumber\\
B_{-1lk_1}=&b_{1l}(x,t)B_{-k_1}+\frac{1}{\lambda_1\lambda_{0}}\frac{\nabla b_{1l}(x,t)\times B_{-k_1}}{2^{|l|}},\nonumber
\end{align}
then
\begin{align}
w_{1l}=B_{1lk_1}e^{i\lambda_1 2^{|l|} k_1\cdot (x-\frac{l}{\mu_1}t)}+B_{-1lk_1}e^{-i\lambda_1 2^{|l|} k_1\cdot (x-\frac{l}{\mu_1}t)}.\nonumber
\end{align}
Thus, we complete the constructions of perturbation $w_1$.

To construct $\chi_1$, we first
   denote $\beta_{1l}$ by
   \begin{align}
    \beta_{1l}(x,t):=\frac{\alpha_l(\mu_1 v_0)}{2\sqrt{\delta}}\frac{g_{k_1}(-f_0(x,t))}{\gamma_{k_1}\big(\frac{R_0(x,t)}{\delta}\big)},
    \end{align}
    then define $l$-perturbations
    \begin{align}
    \chi_{1l}(x,t):=\beta_{1l}(x,t)\Big(e^{i\lambda_1 2^{|l|} k_1\cdot (x-\frac{l}{\mu_1}t)}+e^{-i\lambda_1 2^{|l|} k_1\cdot (x-\frac{l}{\mu_1}t)}\Big).
    \end{align}
 Finally, we define the 1-th perturbation
   \begin{align}
    \chi_1(x,t):=\sum_{l\in Z^3}\chi_{1l}.
   \end{align}
   Thus, $\chi_{1l}$ and $\chi_1$ are both real scalar functions, and as the perturbation $w_1$, the summation in the definition of $\chi_1$ is meaningful.


  Thus, from the assumptions of proposition \ref{p:iterative 2}, we know that
  $$\|\beta_{1l}\|_0\leq \frac{M\sqrt{\delta}}{10L},$$
  therefore, we have
  $$\|\chi_1\|_0\leq\frac{M\sqrt{\delta}}{2L}.$$

\subsection{The constructions of  $v_{01}$,~$p_{01}$,~$\theta_{01}$,~$f_{01}$,~$\mathring{R}_{01}$ }

\indent

The constructions in this section are same as in subsection 4.4.
First, we denote $M_1$ by
   \begin{align}
   M_1=&\sum\limits_{l\in Z^3}b^2_{1l}(x,t)\Big{(}B_{k_1}\otimes B_{k_1}e^{2i\lambda_1 2^{|l|} k_1\cdot (x-\frac{l}{\mu_1}t)}+B_{-k_1}\otimes B_{-k_1}
   e^{-2i\lambda_1 2^{|l|} k_1\cdot (x-\frac{l}{\mu_1}t)}\Big{)}\nonumber\\
   &+\sum\limits_{l,l'\in Z^3 ,l\neq l'}w_{1ol}w_{1ol'}(x,t),
   \end{align}


 and
$N_1,K_1$ by
 \begin{align}
   N_1=&\sum_{l\in Z^3}\Big[w_{1l}\otimes \Big(v_0-\frac{l}{\mu_{1}}\Big)
   +\Big(v_0-\frac{l}{\mu_{1}}\Big{)}\otimes w_{1l}\Big],\nonumber\\
   K_1=&\sum\limits_{l\in Z^3}\beta_{1l}(x,t)b_{1l}(x,t)\Big(B_{k_1}e^{2i\lambda_1 2^{|l|} k_1\cdot (x-\frac{l}{\mu_1}t)}+ B_{-k_1}e^{-2i\lambda_1 2^{|l|} k_1\cdot (x-\frac{l}{\mu_1}t)}\Big{)}+\sum\limits_{l,l'\in Z^3 ,l\neq l'}w_{1ol}\chi_{1l'}.
    \end{align}
 Notice that $N_{1}$ is a symmetric matrix.
   Then we define
  \[ \begin{aligned}
    &v_{01}:=v_0+w_1,\\
    &p_{01}:=p_0-\frac{2w_{1o}\cdot w_{1oc}+|w_{1oc}|^2}{3}-tr(N_1),\\
    &\theta_{01}:=\theta_0+\chi_1-\fint_{T^{3}}{\chi_1}dx,\\
     &R_{01}:=-R_0(x,t)+2\sum\limits_{l\in Z^3}b^2_{1l}(x,t)Re(B_{k_1}\otimes B_{-{k_1}})+\delta \mathring{R}_{01},\\
    &f_{01}:=f_0+2\sum\limits_{l\in Z^3}\beta_{1l}(x,t)b_{1l}(x,t)A_{k_1}+\delta f_{01},
   \end{aligned}\]
   where
   \begin{align}
   \delta \mathring{R}_{01}=&\mathcal{R}(\hbox{div}M_1)+N_1-tr(N_1)Id+\mathcal{R}\Big\{\partial_tw_{1}
   +\hbox{div}\Big[\sum_{l\in Z^3}\Big(w_{1l}\otimes\frac{l}{\mu_{1}}+\frac{l}{\mu_{1}}\otimes w_{1l}\Big)\Big]\Big\}\nonumber\\
   &+(w_{1o}\otimes w_{1oc}+w_{1oc}\otimes w_{1o}+w_{1oc}\otimes w_{1oc})
   -\frac{2w_{1o}\cdot w_{1oc}+|w_{1oc}|^2}{3}Id\nonumber\\
   &-\mathcal{R}\Big(\Big(\chi_1-\fint_{T^{3}}{\chi_1}dx\Big)e_3\Big),
   \end{align}

   and
   \begin{align}
   \delta f_{01}=&\mathcal{G}(\hbox{div}K_1)
   +\mathcal{G}(w_1\cdot\nabla\theta_{0})
   +\mathcal{G}\Big(\partial_t\chi_{1}+\sum_{l\in Z^3}\Big(\frac{l}{\mu_{1}}\cdot\nabla\Big)\chi_{1l}\Big)\nonumber\\
   &+\sum_{l\in Z^3}\Big(v_0-\frac{l}{\mu_1}\Big)\chi_{1l}+w_{1oc}\chi_1.
  \end{align}

   Obviously,
   \begin{align*}
    \hbox{div}v_{1}=\hbox{div}v_0+\hbox{div}w_{1}=0.
   \end{align*}
   As in subsection 4.4, we have
\[\begin{aligned}
    \hbox{div}R_{01}=
    &\partial_tv_{01}+\hbox{div}(v_{01}\otimes v_{01})+\nabla p_{01}-\theta_{01}e_3,
    \end{aligned}\]
 and
 \begin{align}
 \hbox{div}f_{01}=
 &\partial_t\theta_{01}+\hbox{div}(v_{01}\theta_{01}).
 \end{align}
 Thus the functions $v_{01},p_{01},\theta_{01},R_{01},f_{01}$ solve the system (\ref{d:boussinesq reynold}).


\indent


\subsection{The representation of $-R_0(x,t)+2\sum\limits_{l\in Z^3}b^2_{1l}(x,t)Re(B_{k_1}\otimes B_{-{k_1}})$}

\indent

This subsection is similar to that of subsection 5.1. From the definition of $b_{1l}(x,t)$ , we have
\begin{align}
2\sum\limits_{l\in Z^3}b^2_{1l}(x,t)Re(B_{k_1}\otimes B_{-{k_1}})
=&\delta\sum\limits_{l\in Z^3}\alpha_l^2(\mu_1 v_0)\gamma_{k_1}^2\Big{(}\frac{R_0(x,t)}{\delta}\Big{)}\Big(Id-
\frac{k_1}{|k_1|}\otimes\frac{k_1}{|k_1|}\Big)\nonumber\\
=&\delta\gamma_{k_1}^2\Big{(}\frac{R_0(x,t)}{\delta}\Big{)}\Big(Id-
\frac{k_1}{|k_1|}\otimes\frac{k_1}{|k_1|}\Big).\nonumber
\end{align}
Where we used $\sum\limits_{l\in Z^3}\alpha_l^2=1$.
Moreover, from the Lemma \ref{p:split} and estimate (\ref{b:near id}), we can decompose $R_{0}$
\begin{align}
\frac{R_0}{\delta}=\sum\limits_{i=1}^{N}\Big(\gamma_{k_i}\Big(\frac{R_0}{\delta}\Big)\Big)^2\Big(Id-
\frac{k_i}{|k_i|}\otimes\frac{k_i}{|k_i|}\Big).\nonumber
\end{align}
Therefore
\begin{align}
&-R_0(x,t)+2\sum\limits_{l\in Z^3}b^2_l(x,t)Re(B_{k_1}\otimes B_{-{k_1}})=-\delta\sum\limits_{i=2}^{N}\Big(\gamma_{k_i}\Big(\frac{R_0}{\delta}\Big)\Big)^2\Big(Id-
\frac{k_i}{|k_i|}\otimes\frac{k_i}{|k_i|}\Big).\nonumber
\end{align}
Meanwhile, we have
\begin{align}
R_{01}=-\delta\sum\limits_{i=2}^{N}\Big(\gamma_{k_i}\Big(\frac{R_0}{\delta}\Big)\Big)^2\Big(Id-
\frac{k_i}{|k_i|}\otimes\frac{k_i}{|k_i|}\Big)+\delta \mathring{R}_{01}.\nonumber
\end{align}

\subsection{The representation of $f_0+2\sum\limits_{l\in Z^3}\beta_{1l}(x,t)b_{1l}(x,t)A_{k_1}$}

\indent

This subsection is similar to that of subsection 5.2, we omit the detailed proof, only give results.
We have
\begin{align}
&f_0+2\sum\limits_{l\in Z^3}\beta_{1l}(x,t)b_{1l}(x,t)A_{k_1}=\sum\limits_{i=2}^{3}g_{k_i}(f_0(x,t))A_{k_i}.\nonumber
\end{align}
Meanwhile, we have
\begin{align}
f_{01}=f_0+2\sum\limits_{l\in Z^3}\alpha_l(\mu_1v_0)\beta_1(x,t)b_l(x,t)A_{k_1}+\delta f_{01}
=\sum\limits_{i=2}^{3}g_{k_i}(f_0(x,t))A_{k_i}+\delta f_{01}.\nonumber
\end{align}

 \subsection{Estimates on $\delta \mathring{R}_{01}$ and $\delta f_{01}$}

 \indent



We summarize the main properties of $b_{1l}$ and $\beta_{1l}$.
\begin{Lemma}
For any $|l|\leq C_1\mu_1$ and $r\geq0$
\begin{align}
\|b_{1l}\|_r\leq C_1\mu_1^r,\\
\|\beta_{1l}\|_r\leq C_1\mu_1^r,\\
\|(\partial_t+\frac{l}{\mu_1}\cdot\nabla)b_{1l}\|_r\leq C_1\mu_1^{r+1},\\
\|(\partial_t+\frac{l}{\mu_1}\cdot\nabla)b_{1l}\|_r\leq C_1\mu_1^{r+1}.
\end{align}
\begin{proof}
The proof is same as in Lemma \ref{l:estimate amp}, we omit it here.
\end{proof}
\end{Lemma}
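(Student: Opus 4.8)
The plan is to reduce the four bounds to estimating spatial derivatives of the elementary building blocks out of which $b_{1l}$ and $\beta_{1l}$ are assembled — namely $\alpha_l(\mu_1 v_0)$, $\gamma_{k_1}(R_0/\delta)$, $g_{k_1}(-f_0)$, the reciprocal $1/\gamma_{k_1}(R_0/\delta)$ and the constant $\sqrt\delta$ — and then to recombine via the Leibniz inequality (\ref{i:inequality 1}). First, by the interpolation inequality (\ref{i:inequality 2}) (equivalently, by convexity of the H\"older seminorms) it suffices to prove all four estimates for integer $r$, so I take $r\in\mathbb{N}$ from now on, exactly as in the proof of Lemma \ref{l:estimate amp}.

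The key point is that the \emph{only} factor whose derivatives grow with $\mu_1$ is $\alpha_l(\mu_1 v_0)$: by the chain rule every spatial derivative brings down one factor $\mu_1$ together with derivatives of $v_0$, which are controlled by $C_1$; hence $\|\alpha_l(\mu_1 v_0)\|_r\le C_1\mu_1^r$. By contrast, $R_0/\delta=\Id-\mathring R_0/\delta$ lies in $B_{r_0}(\Id)$ by (\ref{b:near id}) and all its spatial derivatives are bounded by $C_1$ uniformly in $\mu_1$; since $\gamma_{k_1}$ is smooth on $B_{r_0}(\Id)$ and bounded below there by $c_{10}>0$, both $\gamma_{k_1}(R_0/\delta)$ and $1/\gamma_{k_1}(R_0/\delta)$ have $\|\cdot\|_r\le C_1$, and similarly $\|g_{k_1}(-f_0)\|_r\le C_1$ because $g_{k_1}$ is a fixed linear functional and $f_0$ is smooth. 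Multiplying these through (\ref{i:inequality 1}) gives $\|b_{1l}\|_r\le C_1\mu_1^r$ and $\|\beta_{1l}\|_r\le C_1\mu_1^r$.

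For the transported amplitudes, set $D_l:=\partial_t+\frac{l}{\mu_1}\cdot\nabla$. Then $D_l[\alpha_l(\mu_1 v_0)]=\mu_1\,(\nabla\alpha_l)(\mu_1 v_0)\cdot D_l v_0$ and $D_l[\gamma_{k_1}(R_0/\delta)]=-\tfrac1\delta (D\gamma_{k_1})(R_0/\delta):D_l\mathring R_0$; the one thing that must be checked is that $D_l v_0$ and $D_l\mathring R_0$ (and all their spatial derivatives) stay bounded by $C_1$ uniformly in $\mu_1$, which holds because on $\mathrm{supp}\,\alpha_l(\mu_1 v_0)$ one has $|\mu_1 v_0-l|\le 1$, hence $|l/\mu_1|\le\|v_0\|_0+1\le C_1$. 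Thus applying $D_l$ costs exactly one extra power of $\mu_1$ (produced only by the $\alpha_l$ factor), and feeding this back into (\ref{i:inequality 1}) yields $\|D_l b_{1l}\|_r\le C_1\mu_1^{r+1}$ and, in the same way, $\|D_l\beta_{1l}\|_r\le C_1\mu_1^{r+1}$ (which is what the fourth line of the statement should read). I do not expect any genuine obstacle: the whole argument is the chain- and product-rule bookkeeping already carried out in Lemma \ref{l:estimate amp}, the only place demanding a moment's care being the support condition $|l|\le C_1\mu_1$, which is precisely what makes $l/\mu_1$ bounded and prevents the transport derivative from generating spurious powers of $\mu_1$.
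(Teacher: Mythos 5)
Your proposal is correct and follows essentially the same route as the paper, which reduces to integer $r$ via (\ref{i:inequality 2}) and then defers to the chain-/product-rule bookkeeping of Lemma \ref{l:estimate amp}; you have simply written out the details the paper leaves as ``easy to check'' (the only $\mu_1$-growth coming from $\alpha_l(\mu_1 v_0)$, the uniform bounds on $\gamma_{k_1}(R_0/\delta)$, its reciprocal and $g_{k_1}(-f_0)$, and the boundedness of $l/\mu_1$ for the transport derivative). You are also right that the fourth estimate is a typo and should concern $\beta_{1l}$.
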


 \begin{Lemma}[Estimates on corrections]
 For any $\alpha\in [0,1)$,
 \begin{align}
 \|w_{1oc}\|_\alpha\leq C_1\frac{\mu_1}{\lambda_1^{1-\alpha}},\qquad
 \end{align}
 \end{Lemma}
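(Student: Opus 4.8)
The plan is to repeat the proof of Lemma \ref{e: estimate correction} almost word for word: here $w_{1ocl}$ is given by the very same formula, only the amplitude $b_{1l}$ has been redefined, and by the preceding lemma it still satisfies $\|b_{1l}\|_r\le C_1\mu_1^r$ for every $r\ge 0$, which is the only property that estimate used. First I would reduce the claim to a bound uniform in $l\in Z^3$, namely $\|w_{1ocl}\|_\alpha\le C_1\mu_1/\lambda_1^{1-\alpha}$; the passage to $w_{1oc}=\sum_{l\in Z^3}w_{1ocl}$ is then carried out exactly as at the end of that proof.

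For the $l$-wise estimate I would split $w_{1ocl}$ into its two summands, each of the form $\frac{1}{\lambda_1\lambda_0\,2^{|l|}}\big(\nabla b_{1l}\times B_{\pm k_1}\big)\,e^{\pm i\lambda_1 2^{|l|}k_1\cdot(x-\frac{l}{\mu_1}t)}$, and apply the product inequality (\ref{i:inequality 1}) with $f$ the amplitude in front and $g$ the plane wave. Since $|k_1|=\bar{\lambda}$ is a fixed constant of the geometric lemma and $|B_{\pm k_1}|$ is bounded, one has $\|g\|_0=1$ and $\|g\|_\alpha\le C(\lambda_1 2^{|l|})^\alpha$ (from the elementary bound $|e^{i\xi\cdot x}-e^{i\xi\cdot y}|\le 2^{1-\alpha}|\xi|^\alpha|x-y|^\alpha$), while $\|f\|_0\le C_1\mu_1/(\lambda_1 2^{|l|})$ and $\|f\|_\alpha\le C_1\mu_1^{1+\alpha}/(\lambda_1 2^{|l|})$ by the preceding lemma applied with $r=1$ and $r=1+\alpha$ (the non-integer order reducing to the integer ones via (\ref{i:inequality 2}), as in Lemma \ref{l:estimate amp}, if one prefers). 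Multiplying out gives
\begin{align}
\|w_{1ocl}\|_\alpha\le C_1\Big(\frac{\mu_1^{1+\alpha}}{\lambda_1\,2^{|l|}}+\frac{\mu_1\,2^{|l|(\alpha-1)}}{\lambda_1^{1-\alpha}}\Big).\nonumber
\end{align}
Because $\alpha<1$ one has $2^{|l|(\alpha-1)}\le 1$, so the second term is $\le C_1\mu_1/\lambda_1^{1-\alpha}$; and because $\mu_1\ll\lambda_1$ one has $\mu_1^{1+\alpha}/\lambda_1=\mu_1(\mu_1/\lambda_1)^\alpha\lambda_1^{\alpha-1}\le\mu_1\lambda_1^{\alpha-1}$, so the first term obeys the same bound. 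This is the desired uniform estimate.

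Finally, summing over $l$ and invoking the partition-of-unity property (\ref{p:unity}) — at each $(x,t)$ only a fixed number of the functions $\alpha_l(\mu_1 v_0)$, hence of the $w_{1ocl}$, are nonzero — one obtains $\|w_{1oc}\|_\alpha\le C_1\mu_1/\lambda_1^{1-\alpha}$, exactly as in the conclusion of Lemma \ref{e: estimate correction}. I do not expect a genuine obstacle: the only mildly delicate point is this last step, since the $C^\alpha$ seminorm is not additive over overlapping supports, but this is precisely the locally-finite-family argument already used in \cite{CDL2} and in the first-step proof; and the apparent competition between the factor $2^{|l|}$ in the phase and the $2^{-|l|}$ in the normalization is harmless because $\alpha<1$ keeps $2^{|l|(\alpha-1)}$ bounded.
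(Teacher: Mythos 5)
Your proposal is correct and follows essentially the same route as the paper's proof, which simply refers back to Lemma \ref{e: estimate correction}: the product inequality (\ref{i:inequality 1}) applied to amplitude times plane wave, the bound $\|\nabla b_{1l}\|_\alpha\le C_1\mu_1^{1+\alpha}$ from the amplitude lemma, absorption of $\mu_1^{1+\alpha}/\lambda_1$ into $\mu_1/\lambda_1^{1-\alpha}$ via $\mu_1\ll\lambda_1$, and the locally finite summation over $l$ coming from (\ref{p:unity}). You merely track the $2^{|l|}$ factors explicitly, which the paper's terser computation suppresses; this changes nothing in the conclusion.
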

 \begin{proof}
 The proof is same as in Lemma \ref{e: estimate correction}. We omit it here.
 \end{proof}

 \subsubsection{Estimates on $\delta \mathring{R}_{01}$}

 \indent

 We may rewrite
  \begin{align}
   \delta \mathring{R}_{01}=&\mathcal{R}(\hbox{div}M_1)+N_1-tr(N_1)Id+\mathcal{R}\Big\{\partial_tw_{1}+\hbox{div}\Big[\sum_{l\in Z^3}\Big(w_{1l}\otimes\frac{l}{\mu_{1}}+\frac{l}{\mu_{1}}\otimes w_{1l}\Big)\Big]\Big\}\nonumber\\
   &+(w_{1o}\otimes w_{1oc}+w_{1oc}\otimes w_{1o}+w_{1oc}\otimes w_{1oc})\nonumber\\
   &-\frac{2w_{1o}\cdot w_{1oc}+|w_{1oc}|^2}{3}Id
   -\mathcal{R}\Big(\Big(\chi_1-\fint_{T^{3}}{\chi_1}dx\Big)e_3\Big)\nonumber
   \end{align}
  Again, we split the stress into three parts,  \\
  (1) the oscillation part $$\mathcal{R}(\hbox{div}M_1)-\mathcal{R}\Big(\Big(\chi_1-\fint_{T^{3}}{\chi_1}dx\Big)e_3\Big),$$
  (2) the transportation part
  \begin{align}
  &\mathcal{R}\Big\{\partial_tw_{1}
   +\hbox{div}\Big[\sum_{l\in Z^3}\Big(w_{1l}\otimes\frac{l}{\mu_{1}}+\frac{l}{\mu_{1}}\otimes w_{1l}\Big)\Big]\Big\}=\mathcal{R}\Big(\partial_tw_{1}+\sum_{l\in Z^3}\Big(\frac{l}{\mu_{1}}\cdot\nabla\Big)w_{1l}\Big),\nonumber
   \end{align}
  (3) the error part
  \begin{align}
  &N_1-tr(N_1)Id+(w_{1o}\otimes w_{1oc}+w_{1oc}\otimes w_{1o}-w_{1oc}\otimes w_{1oc})-\frac{2w_{1o}\cdot w_{1oc}+|w_{1oc}|^2}{3}Id.\nonumber
   \end{align}
Again, we will estimate each terms separately.

\begin{Lemma}[The oscillation part]
\begin{align}
\|\mathcal{R}({\rm div}M_1)\|_\alpha\leq&C_1\frac{\mu_1^3}{\lambda_1^{1-\alpha}},\qquad \forall\alpha\in (0,1)\nonumber\\
\Big\|\mathcal{R}\Big(\Big(\chi_1-\fint_{T^{3}}{\chi_1}dx\Big)e_3\Big)\Big\|_\alpha\leq& C_1\frac{\mu_1}{\lambda_1^{1-\alpha}}.\nonumber
\end{align}

\end{Lemma}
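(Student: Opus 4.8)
The plan is to observe that this lemma is, word for word, the analogue for the second iteration scheme of the oscillation estimate Lemma~\ref{e:oscillation estimate} proved in Section~6, and to reproduce that argument. The only difference between the two settings is that here the amplitude $b_{1l}$ carries the factors $\sqrt{\delta}$ and $\gamma_{k_1}(R_0/\delta)$ in place of $\sqrt{\bar{\rho}(t)}$ and $\gamma_{k_1}(R_0/\bar{\rho}(t))$ (and similarly for $\beta_{1l}$); but the lemma immediately preceding the present one already records $\|b_{1l}\|_r\le C_1\mu_1^r$, $\|\beta_{1l}\|_r\le C_1\mu_1^r$ together with the transported-derivative bounds, which are exactly the inputs used in Section~6. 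Hence every estimate in the proof of Lemma~\ref{e:oscillation estimate} carries over verbatim, and I would only indicate the structure.

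For the first inequality I would split $\hbox{div}M_1=M_{11}+M_{12}$ exactly as in Section~6: $M_{11}$ collects the single-index terms, where, since $k_1\cdot B_{\pm k_1}=0$, the divergence falls only on $\nabla(b_{1l}^2)$, and $M_{12}=\sum_{l\neq l'}\hbox{div}(w_{1ol}\otimes w_{1ol'})$ collects the genuinely interacting terms, where again $k_1\cdot B_{\pm k_1}=0$ forces the divergence onto $\nabla(b_{1l}b_{1l'})$. Then I would apply the Schauder estimate for $\mathcal{R}$, property (\ref{p:R G}), in the commutator form (\ref{e:R b1l}): for an amplitude $a$ with $\|a\|_r\le C_1\mu_1^r$ one has $\|\mathcal{R}(\nabla a\, e^{i\lambda_1 2^{|l|}k_1\cdot x})\|_\alpha\le C_1\mu_1/\lambda_1^{1-\alpha}$ once $\mu_1\ll\lambda_1$. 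Summing over the indices $l$ with $b_{1l}\neq0$ (each summation costing a factor $C_1\mu_1$, as in Section~6, using the bounded overlap of the $\alpha_l$) yields $\|\mathcal{R}(\hbox{div}M_{11})\|_\alpha\le C_1\mu_1^2/\lambda_1^{1-\alpha}$, and the double sum over $l,l'$ yields $\|\mathcal{R}(\hbox{div}M_{12})\|_\alpha\le C_1\mu_1^3/\lambda_1^{1-\alpha}$; together these give the first bound.

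For the second inequality I would apply $\mathcal{R}$ directly to each term of $\chi_1 e_3=\sum_l\beta_{1l}\big(e^{i\lambda_1 2^{|l|}k_1\cdot(x-lt/\mu_1)}+e^{-i\lambda_1 2^{|l|}k_1\cdot(x-lt/\mu_1)}\big)e_3$; here no cancellation is needed, and (\ref{p:R G}) applied with the amplitude $\beta_{1l}$ itself (not $\nabla\beta_{1l}$) gives $\|\mathcal{R}(\beta_{1l}e^{i\lambda_1 2^{|l|}k_1\cdot x}e_3)\|_\alpha\le C_1/\lambda_1^{1-\alpha}$. The mean-value term is harmless because $\mathcal{R}(f(t))=0$ by the definition of $\mathcal{R}$, and summing over the admissible $l$ (again a factor $C_1\mu_1$) produces $\big\|\mathcal{R}\big((\chi_1-\fint_{T^3}\chi_1\,dx)e_3\big)\big\|_\alpha\le C_1\mu_1/\lambda_1^{1-\alpha}$.

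The only genuinely delicate ingredient is the gain $\lambda_1^{-(1-\alpha)}$ of the antidivergence operator $\mathcal{R}$ when it acts on a slowly varying amplitude times a fast oscillation $e^{i\lambda_1 2^{|l|}k_1\cdot x}$: this is the stationary-phase content of (\ref{p:R G}) and (\ref{e:R b1l}), and it is where the standing assumption $1\ll\mu_1\ll\lambda_1$ is consumed, through the amplitude-derivative bounds $\|b_{1l}\|_r,\|\beta_{1l}\|_r\le C_1\mu_1^r$. Once that estimate is in hand, the remainder is bookkeeping: identifying which terms the relations $k_1\cdot B_{\pm k_1}=0$ annihilate, and controlling the number of nonzero indices $l$ and pairs $(l,l')$. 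Since all of this is identical to Section~6, I would present the proof simply by referring to Lemma~\ref{e:oscillation estimate} and noting that the new amplitudes satisfy the same bounds.
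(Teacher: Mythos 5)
Your proposal is correct and is exactly what the paper does: the paper's own "proof" of this lemma consists of the remark that it is identical to Lemma \ref{e:oscillation estimate} of Section 6 (the amplitudes here satisfying the same bounds with $\bar{\rho}(t)$ replaced by $\delta$), and your reproduction of that argument — the splitting $\hbox{div}M_1=M_{11}+M_{12}$, the use of $k_1\cdot B_{\pm k_1}=0$, the stationary-phase estimate (\ref{p:R G})/(\ref{e:R b1l}), the counting over $l$, and $\mathcal{R}(f(t))=0$ for the mean-value term — matches it step for step.
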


\begin{Lemma}[The transportation part]
\begin{align}
\Big\|\mathcal{R}\Big(\partial_tw_{1}+\sum_{l\in Z^3}\frac{l}{\mu_{1}}\cdot\nabla w_{1l}\Big)\Big\|_\alpha\leq& C_1\frac{\mu_1^2}{\lambda_1^{1-\alpha}}.\nonumber
\end{align}


\end{Lemma}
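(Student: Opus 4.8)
The plan is to prove this lemma exactly as Lemma~\ref{e:transport estimate} was proved, since the perturbation $w_1$ constructed here in subsection~12.3 is defined by the same formulas as the one in subsection~4.3 and only the amplitude has changed ($\sqrt{\delta}$ replacing $\sqrt{\bar\rho(t)}$, and $R_0/\delta$ replacing $R_0/\bar\rho(t)$), which affects none of the relevant estimates. The starting observation is that the phase $\phi_l:=\lambda_1 2^{|l|}k_1\cdot\big(x-\tfrac{l}{\mu_1}t\big)$ is transported by the operator $\partial_t+\tfrac{l}{\mu_1}\cdot\nabla$, i.e.
\[
\Big(\partial_t+\frac{l}{\mu_1}\cdot\nabla\Big)e^{\pm i\lambda_1 2^{|l|}k_1\cdot(x-\frac{l}{\mu_1}t)}=0 ,
\]
so that, writing $w_1=\sum_{l\in Z^3}w_{1l}$ with $w_{1l}=B_{1lk_1}e^{i\phi_l}+B_{-1lk_1}e^{-i\phi_l}$, the time derivative falls only on the slowly varying amplitudes:
\[
\partial_tw_1+\sum_{l\in Z^3}\frac{l}{\mu_1}\cdot\nabla w_{1l}
=\sum_{l\in Z^3}\Big[\Big(\partial_t+\frac{l}{\mu_1}\cdot\nabla\Big)B_{1lk_1}\,e^{i\lambda_1 2^{|l|}k_1\cdot(x-\frac{l}{\mu_1}t)}+\Big(\partial_t+\frac{l}{\mu_1}\cdot\nabla\Big)B_{-1lk_1}\,e^{-i\lambda_1 2^{|l|}k_1\cdot(x-\frac{l}{\mu_1}t)}\Big].
\]

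Next I would apply the operator $\mathcal{R}$ to each oscillatory summand and invoke the stationary-phase bound for $\mathcal{R}$, property (\ref{p:R G}): for a frequency $\lambda_1 2^{|l|}\gtrsim\lambda_1$ and an amplitude $a$, $\big\|\mathcal{R}\big(ae^{\pm i\phi_l}\big)\big\|_\alpha\le C_{1,m}\big(\tfrac{\|a\|_0}{\lambda_1^{1-\alpha}}+\tfrac{[a]_m}{\lambda_1^{m-\alpha}}+\tfrac{[a]_{m+\alpha}}{\lambda_1^{m}}\big)$, and since $1\ll\mu_1\ll\lambda_1$ the first term dominates. For the amplitude $a=\big(\partial_t+\tfrac{l}{\mu_1}\cdot\nabla\big)B_{\pm1lk_1}$, recall $B_{1lk_1}=b_{1l}B_{k_1}+\tfrac{1}{\lambda_1\lambda_0}\tfrac{\nabla b_{1l}\times B_{k_1}}{2^{|l|}}$; using the amplitude estimates (the analog of Lemma~\ref{l:estimate amp}, which holds here with the same proof) one has $\|(\partial_t+\tfrac{l}{\mu_1}\cdot\nabla)b_{1l}\|_r\le C_1\mu_1^{r+1}$ and $\|b_{1l}\|_r\le C_1\mu_1^r$, whence $\|a\|_0\le C_1\mu_1$ (the correction piece contributes only $C_1\mu_1^2/\lambda_1$, which is of lower order). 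Therefore each relevant summand satisfies $\|\mathcal{R}(ae^{\pm i\phi_l})\|_\alpha\le C_1\tfrac{\mu_1}{\lambda_1^{1-\alpha}}$, and since $b_{1l}\equiv 0$ unless $|l|\le C_1\mu_1$, summing over $l$ (using the disjointness of the supports of the $\alpha_l$ to keep the count under control) gives
\[
\Big\|\mathcal{R}\Big(\partial_tw_1+\sum_{l\in Z^3}\frac{l}{\mu_1}\cdot\nabla w_{1l}\Big)\Big\|_\alpha
\le\sum_{|l|\le C_1\mu_1}C_1\frac{\mu_1}{\lambda_1^{1-\alpha}}\le C_1\frac{\mu_1^2}{\lambda_1^{1-\alpha}},\qquad\forall\,\alpha\in(0,1),
\]
as claimed.

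The only genuinely nontrivial ingredient is the Schauder/stationary-phase estimate for $\mathcal{R}$ applied to a single highly oscillatory mode, which is the content of Appendix~A; once that is granted the argument is pure bookkeeping. The points that require a little care are (i) verifying that for $\mu_1\ll\lambda_1$ the leading output term $\|a\|_0/\lambda_1^{1-\alpha}$ indeed dominates the higher-order ones, and (ii) checking that the correction $w_{1oc}$ (entering through the $\tfrac{1}{\lambda_1\lambda_0}\nabla b_{1l}$ part of $B_{\pm1lk_1}$) does not worsen the estimate — both are immediate consequences of $1\ll\mu_1\ll\lambda_1$. I would then simply note that the proof is identical to that of Lemma~\ref{e:transport estimate} and omit the repetition.
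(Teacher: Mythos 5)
Your proposal is correct and follows exactly the route the paper intends: it reproduces the proof of Lemma \ref{e:transport estimate} (transport identity killing the phase, stationary-phase bound (\ref{p:R G}) for $\mathcal{R}$ applied to each slowly varying amplitude, then summation over the $O(\mu_1^3)$-many but effectively $|l|\leq C_1\mu_1$ relevant indices), and correctly observes that replacing $\sqrt{\bar\rho(t)}$ by $\sqrt{\delta}$ in the amplitudes changes nothing in the estimates. The paper itself simply states that the proof is the same as that of Lemma \ref{e:transport estimate}, so no further comment is needed.
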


\begin{Lemma}[Estimate on error part I]
\begin{align}
\|N_1\|_0\leq \frac{C_1}{\mu_1}.\nonumber
\end{align}
\end{Lemma}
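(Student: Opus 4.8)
The plan is to decompose $N_1$ according to the splitting $w_{1l}=w_{1ol}+w_{1ocl}$ and to estimate the ``main'' and ``correction'' contributions separately, following verbatim the proof of Lemma \ref{e:error 1}. Write $N_1=N_{11}+N_{12}$, where
\begin{align*}
N_{11}&=\sum_{l\in Z^3}\Big[w_{1ol}\otimes\Big(v_0-\tfrac{l}{\mu_1}\Big)+\Big(v_0-\tfrac{l}{\mu_1}\Big)\otimes w_{1ol}\Big],\\
N_{12}&=\sum_{l\in Z^3}\Big[w_{1ocl}\otimes\Big(v_0-\tfrac{l}{\mu_1}\Big)+\Big(v_0-\tfrac{l}{\mu_1}\Big)\otimes w_{1ocl}\Big].
\end{align*}

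For $N_{11}$ I would use that $w_{1ol}=b_{1l}(x,t)\big(B_{k_1}e^{i\lambda_1 2^{|l|}k_1\cdot(x-\frac{l}{\mu_1}t)}+B_{-k_1}e^{-i\lambda_1 2^{|l|}k_1\cdot(x-\frac{l}{\mu_1}t)}\big)$ and that $b_{1l}$ carries the factor $\alpha_l(\mu_1 v_0)$, so $w_{1ol}(x,t)\neq 0$ forces $|\mu_1 v_0(x,t)-l|\le c_2<1$, hence $|v_0(x,t)-\tfrac{l}{\mu_1}|\le \tfrac{c_2}{\mu_1}<\tfrac{1}{\mu_1}$ pointwise on the support of each summand. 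Combined with the disjointness $\operatorname{supp}\alpha_l\cap\operatorname{supp}\alpha_{l'}=\emptyset$ for $|l-l'|\ge 2$ from \eqref{p:unity} — which guarantees that at each $(x,t)$ only a bounded number of terms (independent of $\mu_1,\lambda_1$) are nonzero — and the bound $\|b_{1l}\|_0\le C_1$ from Lemma \ref{l:estimate amp}, this yields $\|N_{11}\|_0\le \frac{C_1}{\mu_1}$. For $N_{12}$ I would instead invoke the correction estimate (Lemma \ref{e: estimate correction}), $\|w_{1oc}\|_0\le C_1\frac{\mu_1}{\lambda_1}$; since $w_{1ocl}$ is supported where $\alpha_l(\mu_1 v_0)$ and its derivative are nonzero, one again has $|v_0-\tfrac{l}{\mu_1}|\le\tfrac{c_2}{\mu_1}\le 1$ there, and bounded local overlap gives $\|N_{12}\|_0\le C_1\|w_{1oc}\|_0\le \frac{C_1}{\lambda_1}$.

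Adding the two bounds and using $1\ll\mu_1\ll\lambda_1$ gives $\|N_1\|_0\le \|N_{11}\|_0+\|N_{12}\|_0\le \frac{C_1}{\mu_1}$; taking the trace yields in addition $\|tr N_1\|_0\le\frac{C_1}{\mu_1}$, which is the form used afterwards in the estimate of $\delta\mathring R_{01}$.

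The only genuinely delicate point — and the one I would present most carefully — is the localization mechanism: the factor $v_0-\tfrac{l}{\mu_1}$ is \emph{not} small in general, but it is uniformly of size $\lesssim\tfrac1{\mu_1}$ precisely on the (slowly moving) support of $\alpha_l(\mu_1 v_0)$, and one must use the finite-overlap property of those supports simultaneously, since otherwise summing infinitely many contributions each of size $\sim\tfrac1{\mu_1}$ would be meaningless. This is exactly the phenomenon already exploited in Lemma \ref{e:error 1}, so no new obstacle arises and the proof is essentially a transcription of that argument to the present amplitude $b_{1l}=\sqrt{\delta}\,\alpha_l(\mu_1 v_0)\gamma_{k_1}(R_0/\delta)$.
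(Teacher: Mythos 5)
Your proof is correct and is essentially identical to the paper's: the paper proves this statement by deferring to Lemma \ref{e:error 1}, whose argument is exactly your decomposition $N_1=N_{11}+N_{12}$, the localization $|v_0-\tfrac{l}{\mu_1}|\lesssim \tfrac{1}{\mu_1}$ on $\operatorname{supp}\,\alpha_l(\mu_1 v_0)$ together with the finite-overlap property, and the correction bound $\|w_{1oc}\|_0\le C_1\mu_1/\lambda_1$ for the $N_{12}$ piece. No gap; the adaptation to the amplitude $b_{1l}=\sqrt{\delta}\,\alpha_l(\mu_1 v_0)\gamma_{k_1}(R_0/\delta)$ is exactly what the paper intends when it says the proof is "the same" and omits it.
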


\begin{Lemma}[Estimates on error part II]
\begin{align}
\|w_{1o}\otimes w_{1oc}+w_{1oc}\otimes w_{1o}+w_{1oc}\otimes w_{1oc}\|_0\leq C_1\frac{\mu_1}{\lambda_1}.\nonumber
\end{align}

\end{Lemma}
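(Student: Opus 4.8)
The plan is to reduce the whole estimate to the two pointwise bounds already available in this step, together with the elementary product inequality \eqref{i:inequality 1} in the case $r=0$. Recall that in the setting of Proposition \ref{p:iterative 2} we have established $\|w_{1o}\|_0\le\frac{M\sqrt{\delta}}{2L}$ and, from the preceding Lemma on corrections (proved exactly as Lemma \ref{e: estimate correction}), $\|w_{1oc}\|_\alpha\le C_1\frac{\mu_1}{\lambda_1^{1-\alpha}}$ for every $\alpha\in[0,1)$; in particular $\|w_{1oc}\|_0\le C_1\frac{\mu_1}{\lambda_1}$. Here, as stipulated at the start of the section, $C_1$ denotes a constant depending on $v_0,\mathring{R}_0,e(t),\lambda_0,\alpha,\delta$ but not on $\mu_1,\lambda_1$, and may change from line to line.

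First I would estimate the two cross terms: since $|a\otimes b|\le C|a|\,|b|$ pointwise, taking suprema and using the bounds above gives
\begin{align*}
\|w_{1o}\otimes w_{1oc}+w_{1oc}\otimes w_{1o}\|_0\le C\|w_{1o}\|_0\,\|w_{1oc}\|_0\le C\cdot\frac{M\sqrt{\delta}}{2L}\cdot C_1\frac{\mu_1}{\lambda_1}\le C_1\frac{\mu_1}{\lambda_1},
\end{align*}
the fixed quantities $M$, $L$ and the universal $C$ being absorbed into $C_1$. Next, for the purely quadratic correction term, $\|w_{1oc}\otimes w_{1oc}\|_0\le C\|w_{1oc}\|_0^2\le C_1\frac{\mu_1^2}{\lambda_1^2}$, and since we work throughout in the regime $1\ll\mu_1\ll\lambda_1$ this is in turn bounded by $C_1\frac{\mu_1}{\lambda_1}$. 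Summing the three contributions yields the asserted bound $\|w_{1o}\otimes w_{1oc}+w_{1oc}\otimes w_{1o}+w_{1oc}\otimes w_{1oc}\|_0\le C_1\frac{\mu_1}{\lambda_1}$, and taking the trace gives $\|2w_{1o}\cdot w_{1oc}+|w_{1oc}|^2\|_0\le C_1\frac{\mu_1}{\lambda_1}$ as a by-product, which is what is needed for the pressure correction.

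I do not expect any genuine obstacle in this lemma: it is formally identical to Lemma \ref{e: error 2} in the heat-source case, the only difference being that here the amplitude $b_{1l}$ carries the factor $\sqrt{\delta}$ rather than $\sqrt{\bar{\rho}(t)}$, which changes none of the estimates. The one point deserving a line of care is bookkeeping of the constant, i.e.\ verifying that the $C_1$ appearing on the right-hand side inherits no dependence on $\mu_1,\lambda_1$; this is automatic because it comes solely through $\|w_{1oc}\|_0$, whose control is furnished by the corrections Lemma with a constant independent of the free parameters.
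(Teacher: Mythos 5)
Your proof is correct and follows essentially the same route as the paper's own argument for Lemma \ref{e: error 2}: bound the cross terms by $\|w_{1o}\|_0\|w_{1oc}\|_0$ and the quadratic term by $\|w_{1oc}\|_0^2$ using (\ref{e:bound 1o}) and the corrections lemma, then absorb $\frac{\mu_1^2}{\lambda_1^2}$ into $C_1\frac{\mu_1}{\lambda_1}$ via $\mu_1\ll\lambda_1$. Your closing remarks on the trace identity and on the $\sqrt{\delta}$ versus $\sqrt{\bar{\rho}(t)}$ amplitude are also consistent with how the paper handles the two settings.
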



 The proof of the above four lemmas are same as in Lemma \ref{e:oscillation estimate}, Lemma \ref{e:transport estimate}, Lemma \ref{e:error 1} and  Lemma \ref{e: error 2} respectively. We omit it here.

Finally, we conclude that
\begin{align}
\|\delta\mathring{R_{01}}\|_0\leq C_1\Big(\frac{\mu_1^3}{\lambda_1^{1-\alpha}}+\frac{1}{\mu_1}\Big),\qquad \forall\alpha\in(0,1).
\end{align}
Furthermore,
\begin{align}
\|v_{01}-v_0\|_0\leq& \frac{M\sqrt{\delta}}{2L}+C_1\frac{\mu_1}{\lambda_1},\nonumber\\
\|p_{01}-p_0\|_0\leq& C_1\Big(\frac{1}{\mu_1}+\frac{\mu_1}{\lambda_1} \Big).\nonumber
\end{align}

\subsubsection{Estimates on $\delta f_{01}$}

 \indent

Recall that
\begin{align}
   \delta f_{01}=&\mathcal{G}(\hbox{div}K_1)
   +\mathcal{G}(w_1\cdot\nabla\theta_{0})
   +\mathcal{G}\Big(\partial_t\chi_{1}+\sum_{l\in Z^3}\Big(\frac{l}{\mu_{1}}\cdot\nabla\Big)\chi_{1l}\Big)+\sum_{l\in Z^3}\Big(v_0-\frac{l}{\mu_1}\Big)\chi_{1l}+w_{1oc}\chi_1.\nonumber
\end{align}

As before, we split $\delta f_{01}$ into three parts:\\
(1) the oscillation part
\begin{align}
\mathcal{G}(\hbox{div}K_1)+\mathcal{G}(w_1\cdot\nabla\theta_0).\nonumber
\end{align}
(2) the transport part
\begin{align}
\mathcal{G}\Big(\partial_t\chi_{1}+\sum_{l\in Z^3}\Big(\frac{l}{\mu_{1}}\cdot\nabla\Big)\chi_{1l}\Big).\nonumber
\end{align}
(3) the error part
\begin{align}
w_{10c}\chi_1+\sum\limits_{l\in Z^3}\Big(v_0-\frac{l}{\mu}\Big)\chi_{1l}.\nonumber
\end{align}
We again estimate each of them. And the proof of the following lemmas are same in as Lemma \ref{e:osci},  Lemma \ref{e:trans} and  Lemma \ref{e:est 1} respectively. We omit it here.
\begin{Lemma}[The Oscillation Part]
\begin{align}
\|\mathcal{G}({\rm div}K_1)+\mathcal{G}(w_1\cdot\nabla\theta_0)\|_\alpha\leq C_1\frac{\mu_1^3}{\lambda_1^{1-\alpha}}.
\end{align}

\end{Lemma}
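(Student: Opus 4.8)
The plan is to transplant the proof of Lemma~\ref{e:osci} essentially without change. The only difference between the present setting (Proposition~\ref{p:iterative 2}) and that of Section~6 is that in the definitions of $b_{1l}$ and $\beta_{1l}$ the slowly varying amplitude $\bar\rho(t)$ has been replaced by the fixed constant $\delta\in(0,1]$; this does not affect any of the phase bookkeeping, and the amplitude bounds $\|b_{1l}\|_r,\|\beta_{1l}\|_r\le C_1\mu_1^{r}$ (with $C_1$ now absorbing the $\delta$-dependence) continue to hold verbatim. So I would first record the companion of Lemma~\ref{l:estimate amp} for the present $b_{1l},\beta_{1l}$ — proved exactly as there — and then run the oscillation estimate for the two summands $\mathcal{G}(\hbox{div}K_1)$ and $\mathcal{G}(w_1\cdot\nabla\theta_0)$ separately, always under the standing convention $1\ll\mu_1\ll\lambda_1$.

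For $\mathcal{G}(\hbox{div}K_1)$ I would split $\hbox{div}K_1=K_{11}+K_{12}$, where $K_{11}$ collects the diagonal sum $\sum_l\beta_{1l}b_{1l}\big(B_{k_1}e^{2i\lambda_1 2^{|l|}k_1\cdot(x-\frac{l}{\mu_1}t)}+B_{-k_1}e^{-2i\lambda_1 2^{|l|}k_1\cdot(x-\frac{l}{\mu_1}t)}\big)$ and $K_{12}=\sum_{l\ne l'}\hbox{div}(w_{1ol}\chi_{1l'})$. Since $k_1\cdot B_{\pm k_1}=0$, the divergence never falls on an exponential: in $K_{11}$ it leaves $B_{\pm k_1}\cdot\nabla(\beta_{1l}b_{1l})$ times the phase, and in $K_{12}$, after expanding each product of two phases into the four phases with wavenumbers $\lambda_1(\pm 2^{|l|}\pm 2^{|l'|})k_1$ and purely time-dependent factors, it leaves $B_{\pm k_1}\cdot\nabla(b_{1l}\beta_{1l'})$ times a phase. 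Then I would apply the Schauder-type estimate~\eqref{p:R G} for $\mathcal{G}$ to each oscillatory building block, in the form used in \eqref{e:R b1l}--\eqref{e:R b12}: against a slowly varying amplitude (with derivatives of size $\le C_1\mu_1^{r+1}$, by the amplitude bounds and \eqref{i:inequality 1}) and a phase of size $\sim\lambda_1$ it gains a factor $\lambda_1^{\alpha-1}$, and summing over the indices made active by the locally finite partition of unity $\{\alpha_l\}$ yields $\|\mathcal{G}(K_{11})\|_\alpha\le C_1\mu_1^2/\lambda_1^{1-\alpha}$ and $\|\mathcal{G}(K_{12})\|_\alpha\le C_1\mu_1^3/\lambda_1^{1-\alpha}$.

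For the remaining term $\mathcal{G}(w_1\cdot\nabla\theta_0)$, I would use that $\nabla\theta_0$ is a fixed smooth function, so $w_1\cdot\nabla\theta_0=\sum_l(B_{\pm 1lk_1}\cdot\nabla\theta_0)\,e^{\pm i\lambda_1 2^{|l|}k_1\cdot(x-\frac{l}{\mu_1}t)}$ is again a sum of slowly varying amplitudes (of $C^r$-norm $\le C_1\mu_1^{r}$, by Lemma~\ref{l:estimate amp} and \eqref{i:inequality 1}) against phases of size $\sim\lambda_1$; the same estimate \eqref{p:R G} gives $\|\mathcal{G}(w_1\cdot\nabla\theta_0)\|_\alpha\le C_1\mu_1^2/\lambda_1^{1-\alpha}$, which is absorbed into the asserted bound. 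Combining the two contributions proves the lemma.

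The step I expect to be the real work — exactly as in Lemma~\ref{e:osci} — is the bookkeeping of the cross term $K_{12}$: one must check that every wavenumber $\lambda_1(\pm 2^{|l|}\pm 2^{|l'|})k_1$ appearing there is genuinely of order $\lambda_1$, so that \eqref{p:R G} delivers the full gain $\lambda_1^{-(1-\alpha)}$, and that the number of surviving pairs $(l,l')$ is controlled by the finite overlap of the supports of the $\alpha_l$; granting this, the rest is the routine combination of \eqref{i:inequality 1}, the amplitude bounds, and \eqref{p:R G} already invoked above.
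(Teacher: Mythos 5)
Your proposal is correct and follows exactly the route the paper intends: the paper omits this proof outright, stating it is identical to Lemma~\ref{e:osci}, and your argument is precisely that transplant (splitting $\hbox{div}K_1$ into the diagonal part $K_{11}$ and the cross part $K_{12}$, using $k_1\cdot B_{\pm k_1}=0$ so the divergence only hits the amplitudes, and applying the amplitude bounds together with \eqref{p:R G}), with the only change being $\bar\rho(t)\mapsto\delta$. If anything you are slightly more complete than the paper, since the proof of Lemma~\ref{e:osci} never explicitly estimates the term $\mathcal{G}(w_1\cdot\nabla\theta_0)$, which you handle correctly by the same stationary-phase bound.
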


\begin{Lemma}[The transportation Part]
\begin{align}
\Big\|\mathcal{G}\Big(\partial_t\chi_{1}+\sum_{l\in Z^3}\Big(\frac{l}{\mu_{1}}\cdot\nabla\Big)\chi_{1l}\Big)\Big\|_\alpha\leq C_1\frac{\mu_1^2}{\lambda_1^{1-\alpha}}.
\end{align}


\end{Lemma}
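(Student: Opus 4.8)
The plan is to repeat, essentially line for line, the argument that proves Lemma~\ref{e:trans}; the transportation term treated here differs from the one there only in that the amplitude $\beta_{1l}$ is now normalized by the constant $\delta$ instead of by $\bar\rho(t)$, and this change affects none of the estimates. The first step is to use the transport identity
\[
\Big(\partial_t+\frac{l}{\mu_{1}}\cdot\nabla\Big)e^{\pm i\lambda_1 2^{|l|}k_1\cdot(x-\frac{l}{\mu_1}t)}=0,
\]
which shows that in $\partial_t\chi_{1}+\sum_{l}\big(\frac{l}{\mu_1}\cdot\nabla\big)\chi_{1l}$ the material derivative falls only on the amplitudes, so that
\begin{align*}
\partial_t\chi_{1}+\sum_{l\in Z^3}\Big(\frac{l}{\mu_{1}}\cdot\nabla\Big)\chi_{1l}
&=\sum_{l\in Z^3}\Big(\partial_t+\frac{l}{\mu_1}\cdot\nabla\Big)\beta_{1l}\Big(e^{i\lambda_1 2^{|l|}k_1\cdot(x-\frac{l}{\mu_1}t)}+e^{-i\lambda_1 2^{|l|}k_1\cdot(x-\frac{l}{\mu_1}t)}\Big).
\end{align*}
This exhibits the quantity as a finite sum of purely oscillatory functions of frequency $\lambda_1 2^{|l|}k_1$ with slowly varying coefficients $\big(\partial_t+\frac{l}{\mu_1}\cdot\nabla\big)\beta_{1l}$.

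The second step is to apply the stationary-phase/Calder\'on--Zygmund estimate for $\mathcal{G}$ from Appendix~A, namely~(\ref{p:R G}), together with $\mathcal{G}(f(t))=0$ (which discards the spatially constant contributions), to each of these terms. This reduces everything to the amplitude bound $\big\|\big(\partial_t+\frac{l}{\mu_1}\cdot\nabla\big)\beta_{1l}\big\|_r\le C_1\mu_1^{r+1}$, the analogue of Lemma~\ref{l:estimate amp}, which holds verbatim in the present context since $\gamma_{k_1}(R_0/\delta)$ is bounded above and away from zero by~(\ref{b:near id}), $g_{k_1}$ is a fixed bounded linear functional, and all $x$- and $t$-derivatives of $\alpha_l(\mu_1 v_0)$ cost at most a power of $\mu_1$. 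Fixing a large integer $m$ one obtains, for each $l$ with $\beta_{1l}\not\equiv 0$,
\[
\Big\|\mathcal{G}\Big(\big(\partial_t+\frac{l}{\mu_1}\cdot\nabla\big)\beta_{1l}\,e^{\pm i\lambda_1 2^{|l|}k_1\cdot x}\Big)\Big\|_\alpha\le C_1\frac{\mu_1}{\lambda_1^{1-\alpha}},
\]
and summing over the indices $l$ that actually contribute --- those with $|\mu_1 v_0-l|\le1$, hence $|l|\le C_1\mu_1$, the number of contributing indices being controlled exactly as in the proof of Lemma~\ref{e:trans} --- yields the asserted estimate $C_1\mu_1^2/\lambda_1^{1-\alpha}$.

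There is no real obstacle here: the inequality is a mechanical combination of the $\mathcal{G}$-estimate of Appendix~A with the amplitude bounds, and the proof is identical in form to that of Lemma~\ref{e:trans}. The only point worth an explicit remark is the replacement of $\bar\rho(t)$ by the constant $\delta$ in the definition of $b_{1l}$ and $\beta_{1l}$; one checks that (\ref{b:near id}) plays here exactly the role that (\ref{b:bound R 0}) played in Section~6, so that $b_{1l}$ and $\beta_{1l}$ are again smooth and satisfy the same derivative estimates, after which the argument for Lemma~\ref{e:trans} transfers unchanged.
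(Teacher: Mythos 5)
Your proposal is correct and follows exactly the route the paper takes: the paper itself proves this lemma only by reference to Lemma~\ref{e:trans}, whose argument (transport identity to put the material derivative on $\beta_{1l}$, the $\mathcal{G}$-estimate (\ref{p:R G}), the amplitude bounds of Lemma~\ref{l:estimate amp}, and summation over $|l|\leq C_1\mu_1$) you reproduce faithfully. Your explicit check that replacing $\bar\rho(t)$ by $\delta$ leaves the amplitude estimates intact, via (\ref{b:near id}), is precisely the point the paper leaves implicit.
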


\begin{Lemma}[The error Part]
\begin{align}
\Big\|w_{1oc}\chi_1+\sum_{l\in Z^3}\Big(v_0-\frac{l}{\mu_1}\Big)\chi_{1l}\Big\|_0\leq C_1\Big(\frac{\mu_1^2}{\lambda_1}+\frac{1}{\mu_1}\Big).
\end{align}

\end{Lemma}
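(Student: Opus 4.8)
The plan is to estimate the two summands separately, following verbatim the argument of Lemma~\ref{e:est 1}. For the first term $w_{1oc}\chi_1$, I would combine the correction bound $\|w_{1oc}\|_0\le C_1\mu_1/\lambda_1$ (the $\alpha=0$ case of the correction estimate of this section, whose proof is the same as that of Lemma~\ref{e: estimate correction}) with the elementary bound $\|\chi_1\|_0\le M\sqrt\delta/(2L)\le C_1$; the product rule then gives $\|w_{1oc}\chi_1\|_0\le C_1\mu_1/\lambda_1$.

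For the second term $\sum_{l\in Z^3}(v_0-\tfrac{l}{\mu_1})\chi_{1l}$ the essential point is a localization argument. Writing $\chi_{1l}=\beta_{1l}\big(e^{i\lambda_1 2^{|l|}k_1\cdot(x-\frac{l}{\mu_1}t)}+e^{-i\lambda_1 2^{|l|}k_1\cdot(x-\frac{l}{\mu_1}t)}\big)$ and recalling that the amplitude $\beta_{1l}$ carries the cutoff $\alpha_l(\mu_1 v_0)$ with $\mathrm{supp}\,\alpha_l\subseteq B_{c_2}(l)$, one sees that $\chi_{1l}(x,t)$ vanishes unless $|\mu_1 v_0(x,t)-l|\le c_2<1$, hence on the support of $\chi_{1l}$ one has $|v_0-\tfrac{l}{\mu_1}|\le c_2/\mu_1\le 1/\mu_1$. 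Moreover, by the disjointness property $\mathrm{supp}\,\alpha_l\cap\mathrm{supp}\,\alpha_{l'}=\emptyset$ for $|l-l'|\ge 2$ from~(\ref{p:unity}), at each point $(x,t)$ only a bounded number of indices $l$ contribute, and for each of them $|\chi_{1l}|\le 2\|\beta_{1l}\|_0\le C_1$. Therefore $\big\|\sum_{l\in Z^3}(v_0-\tfrac{l}{\mu_1})\chi_{1l}\big\|_0\le C_1/\mu_1$.

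Adding the two estimates gives $\|w_{1oc}\chi_1+\sum_{l\in Z^3}(v_0-\tfrac{l}{\mu_1})\chi_{1l}\|_0\le C_1(\mu_1/\lambda_1+1/\mu_1)$, which is bounded by $C_1(\mu_1^2/\lambda_1+1/\mu_1)$ since $\mu_1\ge 1$ and $C_1$ may be enlarged. There is no genuine obstacle here; the only step requiring a little care is the localization of the second term, namely that the cutoff $\alpha_l(\mu_1 v_0)$ simultaneously forces $v_0-l/\mu_1$ to have size $O(1/\mu_1)$ on the support of $\chi_{1l}$ and, through the finite-overlap property of the partition of unity~(\ref{p:unity}), collapses the formal sum over all of $Z^3$ into a pointwise finite one. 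Everything else is the Leibniz rule together with the already-established bound on the correction $w_{1oc}$.
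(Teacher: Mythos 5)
Your proposal is correct and follows essentially the same route as the paper: the first term is handled by the correction bound $\|w_{1oc}\|_0\le C_1\mu_1/\lambda_1$ together with $\|\chi_1\|_0\le C_1$, and the second by observing that the cutoff $\alpha_l(\mu_1 v_0)$ inside $\beta_{1l}$ forces $|v_0-l/\mu_1|\le 1/\mu_1$ on the support of $\chi_{1l}$ while the finite-overlap property of the partition of unity makes the sum pointwise finite. The paper's own argument (given for the corresponding Lemma in Section 6 and invoked verbatim here) is identical, and your closing remark that $\mu_1/\lambda_1\le\mu_1^2/\lambda_1$ harmlessly reconciles the slightly weaker constant stated in this version of the lemma.
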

Finally, we conclude that
\begin{align}
\|\delta f_{01}\|_0\leq C_1\Big(\frac{\mu_1^3}{\lambda_1^{1-\alpha}}+\frac{1}{\mu_1}\Big).\nonumber
\end{align}

\begin{align}
 \|\theta_{01}-\theta_{0}\|_0\leq& \frac{\sqrt{M\delta}}{2L}+C_1\frac{\mu_1^2}{\lambda_1}.\nonumber
\end{align}
And
\begin{align}
 &R_{01}:=-\delta\sum\limits_{i=2}^{L}\Big(\gamma_{k_i}\Big(\frac{R_0}{\delta}\Big)\Big)^2\Big(Id-\frac{k_i}{|k_i|}\otimes\frac{k_i}{|k_i|}\Big)+\delta \mathring{R}_{01},\nonumber\\
    &f_{01}:=\sum\limits_{i=2}^{3}g_{k_i}(f_0(x,t))A_{k_i}+\delta f_{01},\nonumber
\end{align}
with the estimate
\begin{align}
 \|v_{01}-v_{0}\|_0\leq& \frac{M\sqrt{\delta}}{2L}+C_1\frac{\mu_1}{\lambda_1} ,\nonumber\\
\|p_{01}-p_{0}\|_0\leq& C_1\Big(\frac{1}{\mu_1}+\frac{\mu_1}{\lambda_1} \Big),\nonumber\\
 \|\theta_{01}-\theta_{0}\|_0\leq& \frac{M\sqrt{\delta}}{2L}+\frac{C_1}{\lambda_1} ,\nonumber\\
\|\delta\mathring{R_{01}}\|_0\leq& C_1\Big(\frac{\mu_1^3}{\lambda_1^{1-\alpha}}+\frac{1}{\mu_1}\Big),\nonumber\\
\|\delta f_{01}\|_0\leq& C_1\Big(\frac{\mu_1^3}{\lambda_1^{1-\alpha}}+\frac{1}{\mu_1}\Big).\qquad \forall\alpha\in(0,1).\nonumber
\end{align}
Then we complete the first step.

As the constructions of $\tilde{v},~\tilde{p},~\tilde{\theta},~\tilde{\mathring{R}},~\tilde{f}$ in Proposition \ref{p: iterative 1}, we can also construct $\tilde{v},~\tilde{p},~\tilde{\theta},~\tilde{\mathring{R}},~\tilde{f}$ by inductions which satisfies the Proposition \ref{p:iterative 2}.
We omit the detail here.

\appendix

\section{Stationary phase lemma}\label{s:stationary}

We recall here the following facts about operator $\mathcal{R}$ and $\mathcal{G}$, where the operator $\mathcal{R}$ is defined in \cite{CDL2} .(for a proof we refer to \cite{CDL2})

\begin{proposition}\label{p:stat_phase}

(i) Let $k\in Z ^3\setminus\{0\}$ and $\lambda\geq 1$ be fixed.
For any $c\in C^{\infty}(T^3)$ and $m\in N$ we have
\begin{equation}\label{e:average}
\left|\int_{T^3}c(x)e^{i\lambda k\cdot x}\,dx\right|\leq \frac{[c]_m}{\lambda^m}.
\end{equation}

(ii) Let $\varphi_\lambda\in C^{\infty}(T^3)$ be the solution of
$$\triangle\varphi_\lambda=f_\lambda$$
in $T^3$ with $\int_{T^3}\varphi_\lambda dx=0$, where
$$f_\lambda(x):=c(x)e^{i\lambda k\cdot x}-\fint_{T^3}c(y)e^{i\lambda k\cdot y}dy.$$
Then for any $\alpha\in (0,1)$ and $m\in N$ we have the estimate
\begin{align}
\|\nabla\phi_\lambda\|_\alpha\leq C\Big(\frac{\|c\|_0}{\lambda^{1-\alpha}}+\frac{[c]_m}{\lambda^{m-\alpha}}+\frac{[c]_{m+\alpha}}{\lambda^{m}}\Big).\nonumber
\end{align}
where $C=C(\alpha,m)$.

(iii) Let $k\in Z^3\setminus\{0\}$ be fixed. For a smooth vector field $c\in C^{\infty}(T^3;R^3)$ and a smooth scalar function $d\in C^{\infty}(T^3;R)$. Let
$F(x):=c(x)e^{i\lambda k\cdot x}$ and $H(x):=d(x)e^{i\lambda k\cdot x}$. Then we have
\begin{align}\label{p:R G}
\|\mathcal{R}(F)\|_{\alpha}\leq \frac{C}{\lambda^{1-\alpha}}\|c\|_0+\frac{C}{\lambda^{m-\alpha}}[c]_m+\frac{C}{\lambda^m}[c]_{m+\alpha},\\
\|\mathcal{G}(H)\|_{\alpha}\leq \frac{C}{\lambda^{1-\alpha}}\|d\|_0+\frac{C}{\lambda^{m-\alpha}}[d]_m+\frac{C}{\lambda^m}[d]_{m+\alpha},\label{e:G}
\end{align}
where $C=C(\alpha,m)$.
\end{proposition}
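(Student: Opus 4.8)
The plan is to establish the three parts in order, with part (ii) as the analytic core and part (iii) following by bookkeeping; this is exactly the route of \cite{CDL2}, so I would ultimately cite that reference for the routine estimates and only reproduce the key computation.

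\textbf{Part (i).} This is pure integration by parts. Fix a coordinate index $j$ with $k_j\neq 0$; since $k\in Z^3\setminus\{0\}$ such a $j$ exists and then $|k_j|\geq 1$. From $\partial_{x_j}e^{i\lambda k\cdot x}=i\lambda k_j e^{i\lambda k\cdot x}$ we may write $e^{i\lambda k\cdot x}=(i\lambda k_j)^{-1}\partial_{x_j}e^{i\lambda k\cdot x}$, and integrating by parts $m$ times on the closed manifold $T^3$ (no boundary terms, by periodicity) gives
\[
\int_{T^3}c(x)e^{i\lambda k\cdot x}\,dx=\frac{(-1)^m}{(i\lambda k_j)^m}\int_{T^3}\partial_{x_j}^m c(x)\,e^{i\lambda k\cdot x}\,dx .
\]
Taking absolute values, using $|k_j|\geq 1$ and $|\partial_{x_j}^m c|\leq [c]_m$, yields the stated bound (up to the normalisation of $|T^3|$, which is harmless and absorbed into later constants).

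\textbf{Part (ii).} I would extract the decay in $\lambda$ entirely from the oscillation, via the standard ansatz-and-iterate scheme. Seek an approximate solution $a_0(x)e^{i\lambda k\cdot x}$; since
\[
\triangle\!\left(a_0 e^{i\lambda k\cdot x}\right)=\left(\triangle a_0+2i\lambda k\cdot\nabla a_0-\lambda^2|k|^2 a_0\right)e^{i\lambda k\cdot x},
\]
choosing $a_0=-c/(\lambda^2|k|^2)$ matches the principal term $c\,e^{i\lambda k\cdot x}$ and leaves a remainder of amplitude smaller by a factor $\lambda^{-1}$. Iterating $m$ times produces $\varphi_\lambda=\bigl(\sum_{\ell=0}^{m-1}a_\ell\bigr)e^{i\lambda k\cdot x}+\rho_m$, where $a_\ell$ is an explicit differential expression in $c$ of order $\ell$ divided by $\lambda^{2+\ell}$, and $\rho_m$ solves a Poisson equation with zero-mean right-hand side of amplitude controlled by $\lambda^{-m}[c]_m$ (part (i) handles the mean that must be subtracted). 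Estimating $\|\nabla(a_\ell e^{i\lambda k\cdot x})\|_\alpha$ by the product rule, where the factor $e^{i\lambda k\cdot x}$ contributes $\lambda$ to $\|\cdot\|_0$ of a derivative and $\lambda^\alpha$ to the Hölder seminorm, produces the $\|c\|_0/\lambda^{1-\alpha}$ and $[c]_m/\lambda^{m-\alpha}$ terms; applying the classical Schauder bound $\|\nabla\triangle^{-1}g\|_\alpha\leq C\|g\|_\alpha$ on $T^3$ to $\rho_m$ produces the $[c]_{m+\alpha}/\lambda^m$ term. The main obstacle is precisely this point: $\triangle^{-1}$ gains two full derivatives but \emph{no} power of $\lambda$, so the whole $\lambda$-gain must be read off from the amplitudes $a_\ell$ and the remainder right-hand side, and keeping the constants uniform in $\lambda\geq 1$ and in $k$ (using only $|k|\geq 1$) requires care in the bookkeeping of the iteration.

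\textbf{Part (iii).} Both estimates reduce to (ii). For $\mathcal G$ it is immediate: by definition $\mathcal G(H)=\nabla a$ with $\triangle a=H-\fint H$ and $\int_{T^3}a=0$, so with $H=d\,e^{i\lambda k\cdot x}$ we have $a=\varphi_\lambda$ for $c$ replaced by $d$, and \eqref{e:G} is literally the conclusion of (ii). For $\mathcal R$, recall from Lemma \ref{l:reyn} (i.e.\ from \cite{CDL2}) that $\mathcal R$ is assembled from the operators $\nabla\triangle^{-1}$ and $\triangle^{-1}\mathrm{div}$ acting on the components of its argument; applying (ii) componentwise to $F=c\,e^{i\lambda k\cdot x}$ — after at most one integration by parts to move a derivative onto $c$, which only replaces $\|c\|_0$ by first-order quantities already subsumed in the $[c]_m$ and $[c]_{m+\alpha}$ terms — gives \eqref{p:R G}. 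Since all of these routine steps are carried out in \cite{CDL2}, I would cite that reference and reproduce only the ansatz computation above if a self-contained treatment is wanted.
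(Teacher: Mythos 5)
Your proposal is correct and follows essentially the same route as the paper, which simply cites \cite{CDL2} for (i), (ii) and the $\mathcal{R}$-estimate in (iii) and then, exactly as you do, obtains the $\mathcal{G}$-estimate as an immediate consequence of (ii) via the definition $\mathcal{G}H=\nabla a$ with $\triangle a=H-\fint H$. Your sketches of (i) (iterated integration by parts) and (ii) (the ansatz $a_0e^{i\lambda k\cdot x}$ with iteration plus Schauder for the remainder) reproduce the argument of \cite{CDL2} that the paper relies on, so no gap remains.
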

\begin{proof}
(i),(ii) and the estimate of $\mathcal{R}$ in (iii) can be found in \cite{CDL2}.
From the definition of $\mathcal{G}$ and (ii), we directly arrive at the estimate (\ref{e:G}).
\end{proof}


{\bf Acknowledgments.}
The research is partially supported by the Chinese NSF under grant 11471320.
We thank Tianwen Luo for very valuable discussions.

\end{document}